\newtheorem{theorem}{Theorem}[section]
\newtheorem{lemma}{Lemma}[section]
\newtheorem{proposition}{Proposition}[section]
\newtheorem{remark}{Remark}
\numberwithin{equation}{section}
\newcommand{\R}{\mathbf{R}}
\def\S{\mathbf{S}}
\newcommand{\N}{\mathbf{N}}
\newcommand{\vhi}{\varphi}
\newcommand{\eps}{\varepsilon}
\newcommand{\oo}{\infty}
\newcommand{\nb}{\nabla}
\newcommand{\ov}{\overline}
\newcommand{\dw}{\downarrow}
\newcommand{\rw}{\rightarrow}
\newcommand{\longto}{\longrightarrow}
\newcommand{\Om}{\Omega}
\newcommand{\om}{\omega}
\newcommand{\restr}{ \!\!\mbox{{ \Large$\llcorner$}} }
\newcommand{\be}{\begin{equation}}
\newcommand{\ee}{\end{equation}}
\DeclareMathOperator{\supp}{supp}
\DeclareMathOperator{\argmin}{argmin}
\newcommand\wtilde{\widetilde}
\newcommand\D{\mathcal{D}}
\newcommand\lt{\left}
\newcommand\rt{\right}
\newcommand\F{\mathcal{F}}
\def\void{\varnothing}
\def\cd{\cdot}
\def\s{\sigma}
\def\g{\gamma}
\def\t{\theta}
\def\a{\alpha}
\def\b{\beta}
\def\d{\delta}
\def\x{_\xi}
\def\M{\mathcal{M}}
\def\H{\mathcal{H}}
\def\G{\mathcal{G}^d}
\def\I{\mathcal{I}}
\def\J{\mathcal{J}}
\def\L{\mathscr{L}}
\def\e{_\eps}
\def\sm{\setminus}
\def\de{\partial}
\def\dx{\;\mathrm{d}x}
\def\dt{\;\mathrm{d}t}
\def\dH{\;\mathrm{d}\H}
\def\ds{\;\mathrm{d}\s}
\def\di{\;\mathrm{d}}
\def\rwstar{\stackrel{*}{\rightharpoonup}}
\def\sm{\setminus}
\def\pt{\partial}
\def\ind{\mathbf{1}}
\title{Variational approximation of size-mass energies for $k$-dimensional currents}
\author{A. Chambolle\footnote{ CNRS, CMAP, \'Ecole Polytechnique CNRS UMR 7641, Route de Saclay, F-91128 Palaiseau Cedex France, email: antonin.chambolle@cmap.polytechnique.fr}   \and L. Ferrari\footnote{CMAP, \'Ecole Polytechnique, CNRS UMR 7641, Route de Saclay, F-91128 Palaiseau Cedex France, email: luca.ferrari@polytechnique.fr}\and B. Merlet\footnote{Laboratoire P. Painlev\'e, CNRS UMR 8524, Universit\'e Lille 1, F-59655 Villeneuve d'Ascq Cedex, France, email: benoit.merlet@math.univ-lille1.fr}}
\date{}
\begin{document}
\maketitle
\begin{abstract}
In this paper we produce a $\Gamma$-convergence result for a class of energies $\F^k_{\eps,a}$  modeled on the Ambrosio-Tortorelli functional. For the choice $k=1$ we show that $F^1_{\eps,a}$ $\Gamma$-converges to a branched transportation energy whose cost per unit length is a function $f_a^{n-1}$ depending on a parameter $a>0$ and on the codimension $n-1$. The limit cost $f_a(m)$ is bounded from below by $1+m$ so that the limit functional controls the mass and the length of the limit object. In the limit $a\downarrow0$ we recover the Steiner energy.  \\ 
We then generalize the approach to any dimension and codimension. The limit objects are now $k$-currents with prescribed boundary, the limit  functional controls both their masses and sizes. In the limit $a\downarrow0$, we recover the Plateau energy defined on $k$-currents, $k<n$. The energies $F^k_{\eps,a}$ then can be used for the numerical treatment of the $k$-Plateau problem. 
\end{abstract}


\section{Introduction}\label{Section: Introduction}
  
\medskip
Let $\Om\subset \R^n$ be a convex, bounded open  set. We consider vector measures $\s\in\M(\ov\Om,\R^n)$ of the form
\begin{equation}
\s=m\,\nu\, \H^1\restr\Sigma,
\end{equation}
where $\Sigma$ is a $1$-dimensional rectifiable set oriented by a Borel measurable tangent map $\nu:\Sigma\rw\S^{n-1}$ and $m:\Sigma\rw\R_+$ is a Borel measurable function representing the multiplicity. We write $\s=(m,\nu,\Sigma)$ for such measures. Given a cost function $f\in C(\R_+,\R_+) $ we introduce the functional
\begin{equation}
\label{calF}
\F(\s):=\begin{dcases}
\int_\Sigma f(m)\dH^1&\mbox{ if } \s=(m,\nu,\Sigma),\\
\qquad+\oo &\mbox{otherwise in } \M(\ov\Om,\R^d).
\end{dcases}
\end{equation}
Next, given $\mathscr{S}=\{x_1,\cdots,x_{n_P}\}\subset \Om$ a finite set of points and $c_1,\cdots,c_{n_P}\in\R$ such that $\sum_{j=1}^{n_P} c_j\,=\,0$, we consider the optimization problem $\F(\s)$ for $\s\in\M(\ov\Om,\R^n)$ satisfying
 \begin{equation}\label{point sources}
\nabla\cdot \sigma\, =\, \sum_{j=1}^{n_P} c_j \delta_{x_j}\qquad\mbox{ in }\D'(\R^n).
\end{equation} 
The setting is similar to the one from Beckman~\cite{MR3409718} and Xia~\cite{Xia2}. We model transport nets connecting a given set of sources $\{x_j\in\mathscr{S}\,:\, c_j>0\}$ to a given set of wells $\{x_j\in\mathscr{S}\,:\, c_j<0 \}$ via vector valued measures.   For numerical reasons, we wish to approximate the measure $\s=(m,\nu,\Sigma)$ by a diffuse object (a smooth vector field). For this, we introduce below a family of corresponding ``diffuse'' functionals ${\cal F}_{\eps,a}$ that converge towards~\eqref{calF} in the sense of \textit{$\Gamma$-convergence}~\cite{Bra1,Bra2,DalMaso}. This general idea has proved to be effective in a variety of contexts such as fracture theory, optimal partitions problems and image segmentation~\cite{Amb_Tort2,Iur,ContiFocardiIurlano,Mod_Mort}. More recently this tool has been used to approximate energies depending on one dimensional sets, for instance in~\cite{OS2011} the authors take advantage of a functional similar to the one from Modica and Mortola defined on vector valued measures to approach the branched transportation problem~\cite{Ber_Cas_Mor}. With similar techniques approximations of the Steiner minimal tree problem (\cite{Ambr_Tilli},~\cite{Gilb_Poll} and~\cite{Paol_Step}) have been proposed in~\cite{MR3337998,BonOrlOud16}. \medskip

In the present paper we first extend to any ambient dimension $n\geq 2$ the \textit{phase-field} approximation for a branched transportation energy introduced in~\cite{FerMerCha16} for $n=2$. In particular the approximate functionals ${\cal F}_{\eps,a}$ are modeled on the one from Ambrosio and Tortorelli~\cite{Amb_Tort1}.  
We also extend the construction to any dimension and co-dimension. Indeed, for $1\leq k \leq n-1$ integer, we consider $k$-rectifiable currents $\sigma=(\theta,e,\Sigma)$  where $\Sigma$ is a countably $k$-rectifiable set with approximate tangent $k$-plane defined by a simple unit multi-vector $\xi(x)=\xi_1(x)\wedge\cdots\wedge \xi_k(x)$ and $m:\Sigma\to\R_+$ is a Borel measurable function (the multiplicity). The functional~\eqref{calF} extends to $k$-currents $\sigma$  as follows,
\[
\F(\s):=\begin{dcases}
\int_\Sigma f(m(x))\dH^k&\mbox{ if } \s=(m,\xi,\Sigma),\\
\qquad+\oo &\mbox{otherwise.}
\end{dcases}
\]
\medskip

Let us define the approximate functionals and describe our main results in the case $k=1$. For our phase field approximations we relax the condition on the vector measure $\s$ replacing it by a vector field $\sigma\e\in L^2(\Om,\R^n)$. We then need to mollify condition~\eqref{point sources}. Let $\rho:\R^n\rw \R_+$ be a classical radial mollifier such that $\supp \rho\subset B_1(0)$ and $\int_{B_1(0)}\rho=1$. For $\eps>0$, we set $\rho\e=\eps^{-n}\rho(\cdot/\eps)$. We substitute for~\eqref{point sources} the condition  
\begin{equation}\label{point sources eps}
\nb\cd\sigma\e\,=\, \, \lt(\sum_{j=1}^{n_P} c_j\d_{x_j}\rt) \ast\rho\e=\sum_{j=1}^{n_P} c_j\rho\e(\cdot-x_j)\qquad \mbox{ in } \D'(\R^n).
\end{equation} 
\begin{remark}
Notice that in \eqref{point sources}~\eqref{point sources eps} the equality holds in $\D'(\R^n)$ and not only in $\D'(\Om)$ so that there is no flux trough $\de \Om$.
\end{remark}
We also consider the functions $u\in W^{1,p}(\Om,[\eta,1])$ such that $u\equiv 1$ on $\partial \Omega$ where $\eta=\eta(\eps)$ satisfies
\begin{equation}\label{Def: a}
 \eta=a\,\eps^{n}
\end{equation}
 for some $a\in\R_+$. We denote by $X\e(\Om)$ the set of pairs $(\s,u)$ satisfying the above hypotheses. This set is naturally embedded in $\M(\overline\Om,\R^n)\times L^2(\Om)$. For $(\s,u)\in\M(\overline\Om,\R^n)\times L^2(\Om)$ we set 
\begin{equation}\label{Def: Feps} 
\F_{\eps,a}(\s, u;\Om)\, :=\begin{dcases}
                         \, \int_\Om\lt[ \eps^{p-n+1}  |\nb u|^p +\dfrac{(1-u)^2}{\eps^{n-1}} +
\dfrac{u|\sigma|^2}{\eps}\rt]\dx&\mbox{if }  (\s,u)\in X\e(\Om),\\
\qquad\qquad\qquad+\oo& \mbox{in the other cases.}
                        \end{dcases}
\end{equation}
Let $X$ be the subset of $\M(\ov\Om,\R^n)\times L^2(\Om)$ consisting of those couples $(\s,u)$ such that $u\equiv 1$ and $\s=(m,\nu,\Sigma)$ satisfies the constraint~\eqref{point sources}. Given any sequence $\eps=(\eps_i)_{i\in\N}$ of positive numbers such that $\eps_i\dw0$, we show that the above family of functionals $\Gamma$-converges to 
\begin{equation}\label{Def: limit}
 \F_a(\s,u;\overline\Om)=\begin{dcases}
             \int_{\Sigma\cap\overline\Om} f_a(m(x))\dH^1(x)&\mbox{if }  (\s,u)\in X\mbox{ and }\s=m\,\nu\,\H^1\restr\Sigma,\\
             \qquad\qquad+\oo&\mbox{otherwise.}
            \end{dcases}
\end{equation} 
The function $f_a:\R_+\rw\R_+$  (introduced and studied in the appendix) is the minimum value of some optimization problem depending on $a$ and on the codimension $n-1$ (we note $f_a^d$, with $d=n-k$ in the general case $1\le k\leq n-1$). In particular we prove that $f_a$ is lower semicontinuous, subadditive, increasing, $f_a(0)=0$  and that there exists some $c>0$ such that
\begin{equation}
\label{eq1+m}
\frac{1}{c}\leq \frac{f_a(m)}{\sqrt{1+a\,m^2}}\leq c\quad \mbox{for } m>0.
\end{equation}
The $\Gamma$-convergence holds for the topology of the weak-$*$ convergence for the sequence of measures  $(\s\e)$ and for the strong $L^2$ convergence for the phase field $(u\e)$. For a sequence $(\s\e,u\e)$ we write  $(\s\e,u\e)\rw(\s,u)$ if $\s\e\rwstar\s$ and $\|u\e- u\|_{L^2}\rw0$. In the sequel we first establish that the sequence of functionals $(\F_{\eps,a})\e$ is coercive with respect this topology.
\begin{theorem}\label{teo: 1compactness}
Assume that $a>0$. For any sequence $(\s\e,u\e)\subset\M(\overline\Om,\R^n)\times L^2(\Om)$ with $\eps\dw0$, such that
\begin{equation*}
\F_{\eps,a}(\s\e,u\e;\Om)\leq F_0<+\oo,
\end{equation*} 
then there exists $\s\in\M(\ov\Om,\R^n)$ such that, up to a subsequence, $(\s\e,u\e) \to (\s,1)\in X$.
\end{theorem}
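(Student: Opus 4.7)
The plan relies on three independent uses of the three terms of $\F_{\eps,a}$. First, the Modica--Mortola term immediately gives $\int_\Omega(1-u_\eps)^2\dx\leq F_0\eps^{n-1}\to 0$ (since $n\geq 2$), so $u_\eps\to 1$ in $L^2(\Omega)$.

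The main work is to extract a uniform bound on $|\sigma_\eps|(\Omega)$. A direct Cauchy--Schwarz estimate on $\int_\Omega |\sigma_\eps|\dx$ through the weight $u_\eps$ fails because $u_\eps$ may degenerate to $\eta=a\eps^n$. The idea is to split $\Omega$ along a fixed level set $A_t := \{u_\eps<t\}$, $t\in(0,1)$. On the complement, where $u_\eps\geq t$,
\[
\int_{\Omega\sm A_t}|\sigma_\eps|\dx\;\leq\;\Bigl(\int_\Omega \frac{u_\eps|\sigma_\eps|^2}{\eps}\dx\Bigr)^{1/2}\Bigl(\frac{\eps|\Omega|}{t}\Bigr)^{1/2}\;\leq\;\sqrt{F_0\eps|\Omega|/t}\;\longrightarrow\;0.
\]
On $A_t$, Chebyshev's inequality applied to the phase-field term yields $|A_t|\leq F_0\eps^{n-1}/(1-t)^2$, and using $u_\eps\geq a\eps^n$ on $A_t$,
\[
\int_{A_t}|\sigma_\eps|\dx\;\leq\;\Bigl(\int_{A_t}\frac{u_\eps|\sigma_\eps|^2}{\eps}\dx\Bigr)^{1/2}\Bigl(\frac{|A_t|}{a\eps^{n-1}}\Bigr)^{1/2}\;\leq\;\frac{F_0}{(1-t)\sqrt{a}}.
\]
Summing the two contributions gives a bound on $|\sigma_\eps|(\Omega)$ independent of $\eps$.

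By Banach--Alaoglu, a subsequence satisfies $\sigma_\eps\rwstar \sigma$ in $\M(\overline\Omega,\R^n)$. Testing~\eqref{point sources eps} against $\phi\in C_c^\infty(\R^n)$ and using $\sum_j c_j \rho_\eps(\cdot-x_j)\to\sum_j c_j\delta_{x_j}$ in $\D'(\R^n)$, the divergence constraint passes to the limit: $\dive\sigma = \sum_j c_j\delta_{x_j}$ in $\D'(\R^n)$. Combined with $u\equiv 1$, this gives $(\sigma,1)\in X$.

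The main obstacle is the delicate matching of scales in the mass bound: the lower bound $u_\eps\geq a\eps^n$ makes the dual Cauchy--Schwarz factor $\int_{A_t}\eps/u_\eps\dx$ blow up as $1/(a\eps^{n-1})$, which is compensated \emph{exactly} by the upper bound $|A_t|\lesssim\eps^{n-1}$ coming from the Modica--Mortola term; the two powers of $\eps$ cancel to produce an $\eps$-independent estimate. The hypothesis $a>0$ is essential here, consistent with the fact that the Steiner limit $a\downarrow 0$ must be treated by a separate argument.
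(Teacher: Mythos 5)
Your mass bound, the convergence $u_\eps\to1$ in $L^2$, and the passage to the limit in the divergence constraint are all correct and essentially reproduce Step~1--3 of Lemma~\ref{lemma: mass bound}: where the paper splits along $\{u_\eps>\lambda\}$ and uses Young's inequality on the sublevel set, you use Cauchy--Schwarz together with the Chebyshev bound $|A_t|\lesssim F_0\eps^{n-1}$, and the powers of $\eps$ cancel in exactly the same way. So far, so good.

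There is, however, a genuine gap: you conclude $(\sigma,1)\in X$, but membership in $X$ requires $\sigma$ to be a \emph{rectifiable} vector measure, i.e.\ of the form $\sigma=m\,\nu\,\H^1\restr\Sigma$ with $\Sigma$ countably $1$-rectifiable. Weak-$*$ compactness plus the divergence constraint do not give this; a priori the weak-$*$ limit could be diffuse (absolutely continuous with respect to $\L^n$, or carried by a set of dimension strictly between $1$ and $n$), and nothing in your argument rules that out. This is where the bulk of the paper's compactness proof lies. One first proves (Proposition~\ref{proposition: length bound} via Lemma~\ref{lemma: local length Sigma}) that at $|\sigma|$-a.e.\ point $x$ with suitable lower density the localized energies satisfy $\liminf_{\eps\dw0}\F_{\eps,a}(\sigma_\eps,u_\eps;B(x,r_j))\geq\sqrt{2}\,\kappa\,r_j$ along a sequence $r_j\dw0$; this uses a slicing of a cylinder aligned with the approximate direction $\nu(x)$, the divergence-free structure of $\sigma_\eps$ to control the flux $g_\eps$ through the transverse disks, and the uniform lower bound $f^{d}_{\eps,a}\geq\kappa>0$ for the codimension-$(n-1)$ reduced problem (Proposition~\ref{prop: structurefm}). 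A Besicovitch covering argument then converts this density lower bound into $\H^1(\Sigma)\leq C_*F_0$ for a set $\Sigma$ carrying $\sigma$, and finally White's rectifiability criterion (Theorem~\ref{teo: 1rectifiable}) upgrades ``finite mass, atomic boundary of finite mass, concentrated on a set of finite $\H^1$ measure'' to ``$1$-rectifiable''. Without this chain of arguments the claim $(\sigma,1)\in X$ is unproven, and it is precisely the part of the theorem that cannot be obtained by soft functional-analytic means.
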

Then we prove the $\Gamma$-liminf inequality
\begin{theorem}\label{teo: 1Gammaliminf}
 Assume that $a\geq 0$. For any sequence $(\s\e,u\e)\in \M(\ov\Om,\R^n)\times L^2(\Om)$ that converges to $(\s,u)\in \M(\ov\Om,\R^n)\times L^2(\Om)$ as $\eps\dw0$ it holds
 \begin{equation*}
  \liminf_{\eps\dw0}\F_{\eps,a}(\s\e,u\e;\Om)\geq \F_a(\s,u;\ov\Om).
 \end{equation*}
\end{theorem}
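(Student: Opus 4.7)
The plan is to follow the classical blow-up approach for $\Gamma$-liminf inequalities of Ambrosio--Tortorelli type, adapted to the codimension-$(n-1)$ framework. Passing to a subsequence we may assume $L:=\lim_\eps\F_{\eps,a}(\s\e,u\e;\Om)<+\oo$, otherwise the bound is trivial. When $a>0$, Theorem~\ref{teo: 1compactness} forces $u\equiv 1$ on $\ov\Om$ and $\s=(m,\nu,\Sigma)\in X$; for $a=0$ the same structural conclusion follows from a Modica--Mortola-type argument on the first two summands of $\F_{\eps,0}$ combined with the uniform bound on $\int u\e|\s\e|^2/\eps$. Introduce the positive Radon measures on $\ov\Om$
\[
\mu\e(A):=\int_{A\cap\Om}\lt[\eps^{p-n+1}|\nb u\e|^p+\frac{(1-u\e)^2}{\eps^{n-1}}+\frac{u\e|\s\e|^2}{\eps}\rt]\dx
\]
for Borel $A\subset\ov\Om$, and extract a weak-$*$ cluster point $\mu$ on $\ov\Om$; testing against the constant $1$ on the compact set $\ov\Om$ gives $\mu(\ov\Om)=L$. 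The theorem reduces to the measure estimate $\mu\geq f_a(m)\,\H^1\restr\Sigma$ on $\ov\Om$.

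By the Besicovitch differentiation theorem this is equivalent to the pointwise estimate
\[
\liminf_{r\dw 0}\frac{\mu(C_r(x_0))}{2r}\geq f_a(m(x_0))
\]
at $\H^1$-a.e. $x_0\in\Sigma$, where $C_r(x_0)$ denotes a cylinder of half-length $r$ and transverse radius $r$ centered at $x_0$ with axis $\nu(x_0)$. At $\H^1$-a.e. such $x_0$, $\Sigma$ admits the approximate tangent $T$ of direction $\nu(x_0)$ and $m$ is approximately continuous. I would then choose diagonal sequences $\eps_j,r_j\dw 0$ with $\eps_j/r_j\to 0$ along which $\mu_{\eps_j}(C_{r_j}(x_0))/(2r_j)$ converges to the Radon--Nikodym derivative of $\mu$ at $x_0$, and rescale
\[
\tu_j(y):=u_{\eps_j}(x_0+r_jy),\qquad \tilde\s_j(y):=r_j^{n-1}\s_{\eps_j}(x_0+r_jy),\qquad y\in C_1,
\]
so that $\tilde\s_j\rwstar m(x_0)\nu(x_0)\H^1\restr(T\cap C_1)$ and $\tu_j\to 1$ in $L^2(C_1)$. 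A direct change of variables converts $\mu_{\eps_j}(C_{r_j}(x_0))/(2r_j)$ into a functional of $(\tilde\s_j,\tu_j)$ at effective scale $\tilde\eps_j=\eps_j/r_j\dw 0$ whose $\liminf_j$ is bounded below, by the very definition of $f_a$ in the appendix as the infimum of the codimension-$(n-1)$ cell problem over configurations carrying mass $m(x_0)$, by $f_a(m(x_0))$. The choice $\eta=a\eps^n$ is precisely what keeps the third summand $u|\s|^2/\eps$ of the correct order $am^2$ under the transverse blow-up, so that the resulting limit problem is independent of $r_j$.

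The hard part will be this last reduction. The rescaled pairs $(\tilde\s_j,\tu_j)$ are only asymptotically admissible for the cell problem, and turning them into true competitors requires (i) a cut-off of $\tu_j$ near the lateral boundary of $C_1$ to match the condition $u\equiv 1$ there, (ii) a small divergence-restoring correction of $\tilde\s_j$ preserving the cross-sectional flux $m(x_0)$, and (iii) an appeal to rotational invariance of $f_a$ to reduce to a fixed tangent direction---all at $o(1)$ energy cost as $j\to\oo$. Once the pointwise estimate is secured, integration against $\H^1\restr(\Sigma\cap\ov\Om)$, together with a standard enlargement of the ambient domain to control any concentration of $\mu$ on $\de\Om$, completes the proof: $\F_a(\s,u;\ov\Om)\leq\mu(\ov\Om)=L$.
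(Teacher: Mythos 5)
Your overall architecture is sound and essentially parallel to the paper's: where you pass to a weak-$*$ limit $\mu$ of the energy measures and differentiate it against $\H^1\restr\Sigma$, the paper works directly with the subadditive set function $H(A)=\liminf_{\eps\dw0}\F_{\eps,a}(\s\e,u\e;A)$ and a Vitali--Besicovitch covering; both routes reduce the theorem to the same pointwise density estimate at $\H^1$-a.e.\ point of $\Sigma$. Your scaling bookkeeping is also correct, though asserted rather than checked: under the transverse blow-up the third summand acquires a factor $r_j^{-n}$, while the constraint $u\geq a\eps_j^n=(a\,r_j^n)\,\tilde\eps_j^{\,n}$ lowers the effective parameter to $a\,r_j^n$, and since $d=n-1$ these exactly cancel, so the limiting cell problem indeed carries the mass term $a\,m^2/(\om_d\hat r^{\,d})$ with the original $a$. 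One small correction: your step (i) is superfluous and points to the wrong cell problem --- the lower bound of Proposition~\ref{prop: structurefm} concerns $f^d_{\eps,a}(m,r,\tilde r)$, whose admissible class $Y_{\eps,a}(m,r,\tilde r)$ imposes no boundary condition on $u$, so no lateral cut-off of $\tu_j$ is needed for the liminf.

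The genuine gap is the step you yourself label ``the hard part'' and then defer: showing that the (rescaled or unrescaled) configurations are admissible competitors for the cell problem with mass close to $m(x_0)$ on almost every transverse slice. This cannot be achieved by ``a small divergence-restoring correction of $\tilde\s_j$'' --- in a liminf argument one is not allowed to modify the given sequence; one must \emph{prove} that the flux $g\e(t)=e_1\cdot\int\s\e(t,x')\,\chi(x')\dx'$ of the actual $\s\e$ through a.e.\ cross-section of the cylinder is at least $(1-o(1))\,m(x_0)$. This is the core of the paper's proof of the claim~\eqref{eq: liminflocale}: the mean $\ov{g\e}$ is bounded below using the approximate continuity of $(m,\nu)$ at $x_0$, and the oscillation $g\e(t)-\ov{g\e}$ is rewritten, via $\nb\cd\s\e=0$, as the pairing of $\s\e$ with an explicit primitive-type test field $\phi_t$ supported where $\nb'\chi\neq0$; its limit is small because, by~\eqref{eq: liminf2}, the component of $\s$ transverse to $\nu(x_0)$ is negligible near $x_0$. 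Only after this slicewise flux lower bound can one apply Fubini, recognize each slice as an element of $Y_{\eps,a}((1-3\delta)m(x_0),r,\tilde r)$, and invoke Proposition~\ref{prop: structurefm} together with Fatou's lemma. Without it, the appeal to ``the very definition of $f_a$'' is circular: that definition yields a lower bound only for configurations already known to carry the prescribed mass on slices. (Your justification of the structure of the limit in the case $a=0$ via a Modica--Mortola argument is also unsubstantiated, but this is peripheral: boundedness of the masses already follows from the assumed weak-$*$ convergence, and rectifiability then follows from Proposition~\ref{proposition: length bound}, which holds for $a\geq0$.)
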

We also establish the corresponding $\Gamma$-limsup inequality
\begin{theorem}\label{teo: 1Gammalimsup}
 Assume that $a\geq 0$. For any $(\s,u)\in \M(\ov\Om,\R^n)\times L^2(\Om)$ there exists a sequence $((\s\e,u\e))\subset \M(\ov\Om,\R^n)\times L^2(\Om)$ such that
  \begin{equation*}
  (\s\e,u\e)\stackrel{\eps\dw0}{\longto}(\s,u)\quad\mbox{ in }\M(\ov\Om,\R^n)\times L^2(\Om)
 \end{equation*}
 and
 \begin{equation*}\label{eq: 1limsup}
  \limsup_{\eps\dw0}\F_{\eps,a}(\s\e,u\e;\Om)\leq \F_a(\s,u;\ov\Om).
 \end{equation*}
\end{theorem}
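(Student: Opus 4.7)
The plan is first to reduce to the finite-energy case: if $\F_a(\s,u;\ov\Om)=+\oo$ any sequence works, so I may assume $u\equiv 1$ on $\Om$, $\s=(m,\nu,\Sigma)\in X$ and $\int_\Sigma f_a(m)\dH^1<\oo$. The recovery sequence is then built in the classical two-step manner: first approximate $\s$ by a polyhedral network, then construct an explicit recovery sequence on each segment and glue at the vertices.

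For the polyhedral approximation, I replace $\s$ by a finite vector measure $\s^h=\sum_{i=1}^{N_h}m_i^h\nu_i^h\,\H^1\restr S_i^h$, where the $S_i^h$ are segments meeting only at their endpoints and the $m_i^h$ are positive constants, such that $\s^h$ still satisfies~\eqref{point sources} with the same sources $(x_j,c_j)$, $\s^h\rwstar\s$, and $\F_a(\s^h)\rw\F_a(\s)$. Such a density result is standard in the branched transportation theory (see, e.g., \cite{Ber_Cas_Mor,Xia2}) and follows from the lower semicontinuity, subadditivity and monotonicity of $f_a$ together with the growth estimate~\eqref{eq1+m}. A diagonal extraction reduces the problem to constructing a recovery sequence for each polyhedral $\s$.

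On a single segment $S=\{(t,0):t\in[0,L]\}\subset\R\times\R^{n-1}$ with multiplicity $m$ oriented by $e_1$, I take profiles $(M_\star,v_\star)$ on $\R^{n-1}$ that nearly realize the minimum in the optimization problem defining $f_a(m)$, and set, in a tube $T_\eps^S$ of radius $O(\eps)$ around $S$,
\[
\s\e(t,x'):=\frac{1}{\eps^{n-1}}M_\star\bigl(x'/\eps\bigr)\,e_1,\qquad u\e(t,x'):=v_\star\bigl(x'/\eps\bigr)\vee (a\eps^n),
\]
extending by $(\s\e,u\e)=(0,1)$ outside. A change of variables $y=x'/\eps$ turns each term of $\F_{\eps,a}$ into a scale-invariant integrand on $[0,L]\times\R^{n-1}$, producing $\F_{\eps,a}(\s\e,u\e;T_\eps^S)\rw Lf_a(m)$. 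The convergences $\s\e\rwstar m\,\nu\,\H^1\restr S$ and $u\e\rw 1$ in $L^2(\Om)$ follow from $|T_\eps^S|=O(\eps^{n-1})\rw 0$.

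The main obstacle is the gluing of these cylindrical profiles at the vertices of the polyhedral graph, including both the source/sink points $x_j$ and the internal branching points where Kirchhoff's rule is already satisfied by $\s^h$, while enforcing the mollified divergence constraint~\eqref{point sources eps} exactly. Around each vertex $x_0$, in a ball $B_{R\eps}(x_0)$ of radius proportional to $\eps$, I interpolate the neighbouring cylindrical profiles for $u\e$ and add a corrector $w\e$ compactly supported in $B_{R\eps}$ to $\s\e$ such that $\dive w\e$ exactly compensates the mismatch between the sum of the adjacent cylindrical fluxes and the mollified source $c_j\rho\e(\cdot-x_j)$ (or $0$ at an interior node). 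The mismatch is $L^\oo$ of order $\eps^{-n}$ with zero mean on $B_{R\eps}$, so a standard Bogovski\u{\i} construction yields such a $w\e$ with $\|w\e\|_\oo\les\eps^{-(n-1)}$. Combined with $u\e\sim a\eps^n$ in the vertex ball (of volume $O(\eps^n)$), the extra contribution to each of the three terms of $\F_{\eps,a}$ at a node turns out to be $O(\eps)$, and hence vanishes in the limit. A final diagonal extraction combining the polyhedral approximation with the recovery sequences concludes the proof.
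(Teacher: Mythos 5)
Your proposal is correct and follows essentially the same route as the paper: reduction to polyhedral measures by density in energy, an $\eps$-scaled near-optimal profile from the reduced $(n-1)$-dimensional problem along each segment, and a compactly supported divergence corrector near the segment ends/vertices whose energy contribution is $O(\eps)$ because $u\e\sim a\eps^n$ there. The only difference is cosmetic: where you invoke an abstract Bogovski\u{\i}-type corrector in vertex balls, the paper interpolates between the mollified measure $\s\ast\rho\e$ and the cylindrical profile via an explicit radial vector field $\s^3\e$ achieving the same flux compensation and the same order of error.
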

 \medskip

As already stated, we only considered the case $k=1$ in this introduction. Section~\ref{Section: kProblem} is devoted to the extension of Theorems~\ref{teo: 1compactness}, \ref{teo: 1Gammaliminf} and \ref{teo: 1Gammalimsup} in the case where the 1-currents (vector measures) are replaced with $k$-currents.\medskip

Notice that the coercivity of the family of functionals only holds in the case $a>0$. However, as $a\dw0$ we have the important phenomena: 
\[
f_a\ \stackrel{a\dw 0}\longrightarrow\ c{\mathbf{1}}_{(0,+\oo)}\quad\mbox{pointwise},
\]
for some $c>0$. As a consequence~\eqref{Def: limit} is an approximation of $c\mathcal{H}^1(\Sigma)$ for $a>0$ small and the minimization of~\eqref{Def: Feps} in $X_\eps(\Om)$ provides an approximation of the Steiner problem associated to the set of points $\mathscr{S}$, for a suitable choice of the weights in~
\eqref{point sources}. In the case $k>1$, we obtain a variational approximation of the $k$-Plateau problem.
\medskip
  
\noindent 
\textbf{Structure of the paper:} In Section~\ref{Section: Prel&Not} we introduce some notation and recall some useful facts about vector measures and currents, we also anticipate the optimization problem defining the cost function $f_a^d$ and state some results which are proved in Appendix~\ref{appendix: Reduced Problem}. In Section~\ref{Section: 1Problem} we establish Theorems~\ref{teo: 1compactness},~\ref{teo: 1Gammaliminf} and~\ref{teo: 1Gammalimsup}. In Section~\ref{Section: kProblem} we extend these results to the case $1\leq k\leq n-1$. In Section~\ref{sec: discussion} we discuss the limit $a\dw0$. 
\section*{Acknowledgment}
The authors have been supported by the ANR project Geometrya, Grant No. ANR-12-BS01-0014-01. A.C.~also acknowledges the hospitality of Churchill College and DAMTP, U.~Cambridge, with a support of the French Embassy in the UK, and a support of the Cantab Capital Institute for Mathematics of Information. 

\section{Preliminaries and notation}\label{Section: Prel&Not}
The canonical orthonormal basis of $\R^n$ is denoted by the vectors $e_1,\dots,e_n$.  $\L^n$ denotes the Lebesgue measure in $\R^n$ and given an integer value $k$ we denote with $\om_k$ the measure of the unit ball in $\R^k$, i.e. $\mathscr{L}^k(B_1(0))$. For a point $x\in\R^n$ we note $x=(x_1;x')\in\R\times \R^{n-1}$. For any Borel-measurable set $A\subset \R^n$ we denote with $\ind_A(x)$ the characteristic function of the set $A$
\begin{equation*}
 \ind_A(x):=\begin{dcases}
             1&\mbox{ if } x\in A\\
             0&\mbox{otherwise}.
            \end{dcases}
\end{equation*} Given a vector space $Y$ and its dual $Y'$ for $\om\in Y$ and $\s\in Y'$ we write $\langle \om ,\s\rangle$ for the dual pairing. 
 
 \subsection{Measures and vector measures}\label{subSection: measureAndVec}
We denote with $\M(\Om)$ the vector space of \textit{Radon measures} in $\Om$ and with $\M(\Om,\R^n)=\M(\Om)^n$ the vector space of vector valued measures. For a measure $\mu \in \M(\Om)$ we denote with $|\mu|$ its \textit{total variation}, in the vector case $\mu\in \M(\Om,\R^n)$  we write $\mu=\nu |\mu|$ where $\nu$ is a $|\mu|$-measurale map into $\S^{n-1}$. 
We say that a measure is \textit{supported} on a Borel set $E$ if $|\mu|(\Om\sm E)=0$. For an integer $k< n $ we denote with $\H^k$ the $k$-dimensional \textit{Hausdorff} measure as in~\cite{Am_Fu_Pal}. Given a set $E\in \Om$, such that, $\H^k(E)$ is finite for some $k$ the restriction $\H^k\restr E$ defines a Radon measure in the space $\M(\Om)$. A set  $E\in \Om$ is said to be \textit{countably $k$-rectifiable} if up to a $\H^k$ negligible set $N$, $E\sm N$ is contained in a countable union of $C^1$ $k$-dimensional manifolds.

\subsection{Currents}\label{subSection: Currents}
We denote with $\D^k(\Om)$ the vector space of compactly supported smooth $k$-differential forms. For a $k$-differential form $\om$ its comass is defined as
\begin{equation*}
\|\om\| =\sup \{\langle\om,\xi\rangle:\xi \mbox{ is a unit, simple $k$-vector}\}
\end{equation*}
Let $\D_k(\Om)$ be the dual to $\D^k(\Om)$ i.e. the space of \textit{$k$-currents} with its weak-$*$ topology. We denote with $\de$ the boundary operator that operates by duality as follows
\begin{equation*}
\langle \de \s,\om\rangle=\langle\s, \di \om\rangle\quad \mbox{ for all $(k-1)$-differential forms }\, \om.
\end{equation*}
The mass of a $k$-current $M(\s)$ is the supremum of $\langle \s,\om\rangle$ among all $k$-differential forms with comass bounded by $1$. 
For any $k$-current $\s$ such that both $\s$ and $\de \s$ are of finite mass we say that $\s$ is a normal $k$-current and we write $\s \in N_k(\Om)$. 
On the space $\D_k(\Om)$ we can define the \textit{flat norm} by
\begin{equation*}
\mathbb{F}(\s)=\inf\lt\{M(R)+M(S)\;:\; \s = R+\de S \mbox{ where } S\in \D_{k+1}(\Om) \mbox{ and } R\in \D_k(\Om)\rt\},
\end{equation*}
which metrizes the weak-$*$ topology on currents on compact subsets of $N_k(\Om)$.
 By the Radon-Nikodym theorem we can identify a $k$-current $\s$ with finite mass with the vector valued measure $\nu \mu_\s$ where $\mu_\s$ is a finite positive valued measure and $\nu$ is a $\mu_\s$-measurable map in the set of unitary $k$-vectors for the mass norm. In particular the action of $\s$ on $\om$ can be written as
 \begin{equation*}
 \langle\s,\om\rangle=\int_{\Om}\langle\om,\nu\rangle\di \mu_\s.
 \end{equation*}
 For a finite mass $k$-current the mass of $\s$ coincides with the total variation of the measure $\mu_\s$. 
 A $k$-current $\s$ is said to be $k$-rectifiable if we can associate to it a triplet $(\theta,\nu,\Sigma)$ such that 
  \begin{equation*}
 \langle\s,\om\rangle=\int_{\Sigma}\theta\langle\om,\nu\rangle\dH^k
 \end{equation*}
 where $\Sigma$ is a countably $k$-rectifiable subset of $\Om$, $\nu$ at $\H^k$ a.e. point is a unit simple $k$-vector that spans the tangent plane to $\Sigma$ and $\theta$ is an $ L^1(\Om,\H^k{\restr\Sigma})$ function that can be assumed positive. We will denote with $R_k(\Om)$ the space of these $k$-rectifiable currents. Among these we name out the subset $P_k(\Om)$ of $k$-rectifiable currents for which $\Sigma$ is a finite union of polyhedra, these will be called \textit{polyhedral chains}. Finally the \textit{flat chains} $F_k(\Om)$ consist of the closure of $P_k(\Om)$ in the weak-$*$ topology. By the scheme of Federer~\cite[4.1.24]{Federer} it holds
\begin{equation*}
P_k(\Om)\subset  N_k(\Om)\subset F_k(\Om).
\end{equation*}
\begin{remark}[$1$-Currents and Vector Measures]
Since the vector spaces $\Lambda_1 \R^n$, $\Lambda^1 \R^n$  identify with $ \R^n$, any vector measure $\s\in \M(\Om,\R^n)$ with finite mass indentifies with a $1$-current with finite mass and viceversa. The divergence operator acting on measures is defined by duality as the boundary operator for currents. In the following $\s\in\M(\Om,\R^n)$ is called a rectifiable vector measure if it is $1$-rectifiable as $1$-current. In the same fashion we define polyhedral $1$-measures.
\end{remark}

\subsection{Functionals defined on flat chains}\label{subSection: funcOnCurr}
For $f:\R\mapsto\R^+$ an even function we define a functional 
\begin{align*}
P_k(\Om)\qquad&\longto\qquad\R_+,\\
P= \sum_j (m_j,\nu_j,\Sigma_j)&\longmapsto \F(P)=\sum_jf(m_j)\H^k(\Sigma_j),
\end{align*}
on the space of polyhedral currents. Under the assumption that $f$ is lower semi-continuous and subadditive, $\F$ can be extended to a lower semi-continuous functional by relaxation
\begin{align*}
F_k(\Om)\qquad&\longto\qquad\;\;\qquad\R_+,\\
P\qquad\;\;\;&\longmapsto \;\;\F(P)=\inf \lt\{\liminf_{P_j\rw P}\F(P):(P_j)_j\subset P_k(\Om) \mbox{ and } P_j\rw P \rt\}.
\end{align*}
as shown in~\cite[Section~6]{White1}. Furthermore, in~\cite{CoDeMaSt17} the authors show  that if $f(t)/t\rw\oo$ as $t\rw0$, then $\F(\s)<\oo$ if and only if $\s$ is rectifiable and for any such $\s$ the functional takes the explicit form
\begin{equation}\label{Def: F}
 \F(\s)=\int_\Sigma f(m(x))\dH^k(x)\qquad \mbox{if } \s=(m,\nu,\Sigma).
\end{equation}
To conclude this subsection let us recall a sufficient condition for a flat chain to be rectifiable, proved by White in~\cite[Corollary 6.1]{White2}. 
\begin{theorem}\label{teo: krectifiable}
Let $\s\in F_k(\Om)$. If $M(\s)+M(\de \s)<\oo$ and if there exists a set $\Sigma\subset\Om$ with finite $k$-dimensional Hausdorff measure such that $\s=\s\restr\Sigma$ then $\s\in R_k(\Om)$ i.e., $\s$ writes as $(m,\nu,\Sigma)$.
\end{theorem}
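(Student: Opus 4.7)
The plan is to extend Federer's structure theorem for normal currents to the setting of flat chains. Since $M(\sigma) + M(\partial\sigma) < \infty$, $\sigma$ is a normal $k$-current and admits a polar decomposition $\sigma = \nu\,\mu_\sigma$ with $\mu_\sigma = \|\sigma\|$ a finite positive Radon measure and $\nu$ a $\mu_\sigma$-measurable unit $k$-vector field. The classical structure theorem for $\mathcal{H}^k$-finite Borel sets splits $\Sigma = \Sigma_r \cup \Sigma_u$ into a countably $k$-rectifiable part $\Sigma_r$ and a purely $k$-unrectifiable part $\Sigma_u$. The theorem reduces to showing that $\sigma \restr \Sigma_u = 0$ and that on $\Sigma_r$ the orientation $\nu(x)$ is, at $\mathcal{H}^k$-a.e.\ $x$, a simple unit $k$-vector spanning the approximate tangent plane $T_x \Sigma_r$; the multiplicity is then read off as the Radon–Nikodym derivative $m(x) = d\mu_\sigma/d(\mathcal{H}^k \restr \Sigma_r)(x)$.

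To show $\sigma \restr \Sigma_u = 0$ I would proceed by slicing. A preliminary use of the Federer–Fleming deformation theorem on small dyadic cubes produces the density bound $\Theta^{*k}(\mu_\sigma, x) < \infty$ at $\mu_\sigma$-a.e.\ point, hence $\mu_\sigma \ll \mathcal{H}^k \restr \Sigma$. For any $(n-k)$-plane $L$ with $k$-dimensional orthogonal complement $L^\perp$, slicing yields a measurable family of normal $0$-currents $\langle \sigma, L, y \rangle$ indexed by $y \in L^\perp$, supported on $\Sigma \cap (L+y)$, and satisfying the coarea-type inequality $\int_{L^\perp} M(\langle \sigma \restr \Sigma_u, L, y\rangle)\, dy \leq c\, M(\sigma \restr \Sigma_u)$. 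The Besicovitch–Federer projection theorem asserts that the orthogonal projection of $\Sigma_u$ onto $L^\perp$ is $\mathcal{H}^k$-null for almost every direction $L$; equivalently, for such $L$ and $\mathcal{H}^k$-a.e.\ $y$ the intersection $\Sigma_u \cap (L+y)$ is empty, so the corresponding slice of $\sigma \restr \Sigma_u$ vanishes. Letting $L$ range over a dense family of directions and invoking the fact that a normal current whose slices vanish in a dense set of directions must itself be zero then gives $\sigma \restr \Sigma_u = 0$.

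This slicing–projection step is the main obstacle, as it couples the flat-chain slicing calculus with the Besicovitch–Federer unrectifiability theorem and must be done in a way that preserves the flat-chain structure (in particular the normal-current hypothesis on the slices, which uses $M(\partial\sigma) < \infty$ together with the standard slicing identity $\partial \langle \sigma, L, y\rangle = \pm \langle \partial\sigma, L, y\rangle$). The remaining step is comparatively soft: on $\Sigma_r$, at $\mathcal{H}^k$-a.e.\ $x$ the approximate tangent plane $T_x \Sigma_r$ exists, and slicing $\sigma$ by small transverse $(n-k)$-planes at scale $r \downarrow 0$ together with a blow-up argument shows that any weak limit is concentrated on $T_x \Sigma_r$; uniqueness of the polar decomposition then forces $\nu(x)$ to be a simple unit $k$-vector whose span is precisely $T_x \Sigma_r$. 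Combining this with the density $m$ defined above yields the rectifiable representation $\sigma = (m,\nu,\Sigma_r) \in R_k(\Omega)$.
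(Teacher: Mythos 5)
The paper does not actually prove this statement: it is quoted from White \cite[Corollary~6.1]{White2}, so there is no internal argument to compare yours against. Your strategy is nonetheless a correct and recognizable one, namely the classical route for \emph{normal} currents (which is what $\s$ is, by the hypothesis $M(\s)+M(\de\s)<\oo$): polar decomposition, splitting $\Sigma=\Sigma_r\cup\Sigma_u$ by the structure theorem, Besicovitch--Federer to annihilate $\Sigma_u$, and a blow-up on $\Sigma_r$ to identify the orientation and the multiplicity. White's proof is genuinely different: he establishes a general slicing criterion (a finite-mass flat chain is rectifiable if and only if almost all of its $0$-dimensional slices are rectifiable) and then observes that, for a chain concentrated on a set $\Sigma$ with $\H^k(\Sigma)<\oo$, the Eilenberg inequality forces almost every $0$-slice to be a finite-mass chain concentrated on a \emph{finite} set, hence rectifiable. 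White's route works over arbitrary coefficient groups and avoids the projection theorem; yours stays inside the real-coefficient Federer calculus, which suffices for the present paper.

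Two steps of your sketch need repair or at least more justification. (i) The assertion that the deformation theorem yields $\Theta^{*k}(\mu_\s,x)<\oo$ for $\mu_\s$-a.e.\ $x$, and thence $\mu_\s\ll\H^k\restr\Sigma$, is not the standard argument and is dubious as stated; the correct tool is Federer's absolute continuity theorem, $\|T\|\ll\mathcal{I}^k_1$ for $T$ normal \cite[4.1.20]{Federer}. Note that this single fact, combined with Besicovitch--Federer in the form $\mathcal{I}^k_1(\Sigma_u)=0$ for a purely unrectifiable set of finite $\H^k$-measure, already gives $\mu_\s(\Sigma_u)=0$ \emph{without any slicing}, so your ``main obstacle'' can be bypassed entirely. (ii) If you do slice, beware that $\s\restr\Sigma_u$ is in general not normal (its boundary may have infinite mass), so you cannot apply the normal-current slicing theory to it directly; you must set $\langle\s\restr\Sigma_u,\pi,y\rangle:=\langle\s,\pi,y\rangle\restr\Sigma_u$ and deduce $\s\restr\Sigma_u=0$ from the identity $(\s\restr\Sigma_u)\restr\pi^{\#}(dy^1\wedge\cdots\wedge dy^k)=\int\langle\s,\pi,y\rangle\restr\Sigma_u\,dy$ for a set of projections $\pi$ of \emph{full measure} in the Grassmannian whose pulled-back $k$-covectors span $\Lambda^k(\R^n)$; vanishing of the slices for a merely \emph{dense} set of directions does not by itself kill a finite-mass current. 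With these corrections the outline is sound; the remaining blow-up step on $\Sigma_r$ is standard but still requires the $C^1$ covering of $\Sigma_r$ and the locality of the polar decomposition to be written out.
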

In the context of vector measures the theorem writes as
\begin{theorem}\label{teo: 1rectifiable}
Let $\s\in\M(\Om,\R^n)$. If $|\s|(\Om)+|\nb \cdot s|(\Om)<\oo$, $\nb\cdot\s$ is at most a countable sum of Dirac masses and there exists $\Sigma$ with $\H^1(\Sigma)<\oo$ and $\sigma=\sigma \restr\Sigma$ then $\s$ is a rectifiable vector measure in the sense expressed in Subsection~\ref{subSection: Currents}.
\end{theorem}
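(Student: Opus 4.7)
The statement is a direct specialization of Theorem~\ref{teo: krectifiable} to the case $k=1$, exploiting the identifications already recorded in the remark preceding it. My plan is therefore to cast the hypotheses on $\sigma$ into the language of $1$-currents and invoke White's result.

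First, I would make the identification explicit. Since $\Lambda_1 \R^n \cong \Lambda^1 \R^n \cong \R^n$, any $\sigma \in \M(\Om,\R^n)$ with $|\sigma|(\Om) < \infty$ defines a $1$-current $T_\sigma \in \D_1(\Om)$ by $\langle T_\sigma, \om\rangle := \int_\Om \langle \om(x), \nu(x)\rangle \di|\sigma|(x)$ where $\sigma = \nu\,|\sigma|$. By construction, $M(T_\sigma) = |\sigma|(\Om)$. Moreover, testing against a $0$-form (i.e., a smooth compactly supported function $\vhi$) and integrating by parts, $\langle \partial T_\sigma, \vhi\rangle = \langle T_\sigma, d\vhi\rangle = \int_\Om \nabla \vhi \cdot \nu \di|\sigma| = -\langle \nabla\cdot\sigma, \vhi\rangle$, so $\partial T_\sigma$ coincides (up to sign) with the distribution $\nabla\cdot\sigma$. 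The hypothesis $|\nabla\cdot\sigma|(\Om) < \infty$ then yields $M(\partial T_\sigma) < \infty$, so $T_\sigma \in N_1(\Om) \subset F_1(\Om)$.

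Second, I would verify the concentration condition. The hypothesis $\sigma = \sigma\restr\Sigma$ means $|\sigma|(\Om \sm \Sigma) = 0$, and from the integral representation of $T_\sigma$ this immediately gives $T_\sigma = T_\sigma \restr \Sigma$ as a current. Since $\H^1(\Sigma) < \infty$ by assumption, all hypotheses of Theorem~\ref{teo: krectifiable} are met with $k=1$.

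Applying that theorem, $T_\sigma \in R_1(\Om)$, i.e.\ there exists a triplet $(m,\nu,\Sigma')$ with $\Sigma'$ countably $1$-rectifiable, $m \in L^1(\H^1\restr \Sigma')$ positive, and $\nu$ a unit simple $1$-vector field (i.e.\ a unit vector in $\R^n$) tangent to $\Sigma'$ at $\H^1$-a.e.\ point, such that $\langle T_\sigma, \om\rangle = \int_{\Sigma'} m\,\langle \om,\nu\rangle\dH^1$. Reading this equality through the identification $\Lambda^1\R^n = \R^n$, it is precisely the statement that $\sigma = m\,\nu\,\H^1\restr\Sigma'$ as a vector measure, which is the rectifiability claim.

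The only delicate point is really the translation between divergence of a vector measure and boundary of the associated $1$-current, including the matter of boundary behaviour at $\partial\Om$; but since the paper works with duality with $\D'(\R^n)$ (see the remark after~\eqref{point sources eps}) and $\sigma$ is supported in $\ov\Om$, no flux escapes through $\partial \Om$ and the two notions agree. The extra hypothesis that $\nabla\cdot\sigma$ is a countable sum of Dirac masses is not needed for the application of Theorem~\ref{teo: krectifiable}—only its finite mass matters—so it is recorded here merely because this is the form in which the divergence appears in the rest of the paper (cf.~\eqref{point sources}).
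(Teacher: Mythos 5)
Your argument is correct and coincides with the paper's own treatment: the paper states Theorem~\ref{teo: 1rectifiable} as an immediate translation of White's criterion (Theorem~\ref{teo: krectifiable}) under the identification of finite-mass vector measures with $1$-currents, and gives no further proof. Your explicit verification that finite mass plus finite divergence mass make $T_\sigma$ a normal, hence flat, $1$-chain concentrated on $\Sigma$ --- together with the observation that the Dirac-mass hypothesis is not needed for this application --- is exactly what is required.
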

\subsection{Reduced problem results in dimension $n-k$}\label{subSection: reducedProblem}
This subsection is devoted at introducing some notation and results corresponding to the case $k=0$. In the sequel, these results are used to describe the energetical behaviour of the  $(n-k)$-dimensional slices of the configuration $(\sigma\e,u\e)$. We postopone the proofs to Appendix~\ref{appendix: Reduced Problem}. We set $d=n-k$, $p > d$ and consider $\eps$ to be a sequence such that $\eps\dw0$. Let $B_r(0)\subset \R^d$ be the ball of radius $r$ centered in the origin, we consider the functional
\begin{equation*}\label{Def: Eeps}
  E_{\eps,a}(\vartheta,u;B_r):=\int_{B_{r}}\lt[\eps^{p-d}|\nb u|^p+\frac{(1-u)^2}{\eps^{d}}+\frac{u|\vartheta|^2}{\eps}\rt]\dx
\end{equation*} 
where $u\in W^{1,p}(B_r)$ is constrained to satisfy the lower bound $u\geq a\,\eps^{d+1}=:\eta$ 
 and $\vartheta\in L^2(B_r)$ is such that $\supp(\vartheta)\subset B_{\tilde r}$ with $0<\tilde r <r$, $\|\vartheta\|_1= m$. 
This leads to define the set
\begin{equation*}
Y_{\eps,a}(m,r,\tilde r)=\lt\{(\vartheta,u)\in L^2(B_r)\times W^{1,p}(B_r,[\eta,1])\,:\, \|\vartheta\|_1= m\mbox{ and } \supp(\vartheta)\subset B_{\tilde r}\rt\},                                                                                                
\end{equation*}
and the optimization problem
 \begin{equation}\label{def:min1}
 f_{\eps,a}^d(m,r,\tilde r)=\inf_{Y_{\eps,a}(m,r,\tilde r)}E_{\eps,a}(\vartheta,u;B_r).
 \end{equation}
 Let $f^d_a:[0,+\oo)\longto \R_+$ be defined as
\begin{equation}\label{Def: reducedfd}
 f^d_a(m)=\begin{dcases}
       \;\min_{\hat r>0}\quad\lt\{\frac{a \,m^2}{\;\om_{d}\;\hat r^{d}}+\;\om_{d}\;\hat r^{d}+(d-1)\;\om_{d}\;q^d_{\oo}(0,\hat r)\rt\},&\mbox{ for }m>0,\\
       \\
       \qquad\qquad\qquad0, &\mbox{ for }m=0,
      \end{dcases}
\end{equation}
with
\begin{equation}\label{eq: ginfinito}
q^d_{\oo}(\xi,\hat r):=\inf\lt\{\int_{\hat r}^{+\oo}t^{d-1}\lt[|v'|^p+(1-v)^2\rt]\dt\, :\, v(\hat r)=\xi \,\mbox{ and  } \,\lim_{t\rw+\oo}v(t)=1\rt\},
\end{equation}
for $\hat r>0$, $\xi\geq0$. We have the following results
\begin{proposition}\label{prop: structurefm}
For any $r>\tilde r>0$, it holds
\begin{equation}\label{eq: n-1liminf}
 \liminf_{\eps\dw0}f_{\eps,a}^d(m,r,\tilde r)\geq f^d_a(m).
\end{equation} 
There exists a uniform constant $\kappa:=\kappa(d,p)$ such that 
\begin{equation}\label{eq: reducedconstant}
 f^d_a(m)\geq \kappa\qquad \qquad \mbox{ for every } m>0.
\end{equation} 
\end{proposition}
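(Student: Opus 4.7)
The statement has two parts and my plan is to handle them separately: the $\liminf$ bound is proved by rescaling to $\eps$-independent variables and identifying an effective ``core'' radius $\hat r_\eps$ on which $u_\eps$ essentially saturates its lower bound $\eta=a\eps^{d+1}$, while the uniform positive constant $\kappa$ is extracted directly from the structural formula~\eqref{Def: reducedfd} by dropping the nonnegative mass term.

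\textbf{Rescaling and selection of the core.} Given a near-minimizing sequence $(\vartheta_\eps,u_\eps)\in Y_{\eps,a}(m,r,\tilde r)$, the plan is to set $y=x/\eps$, $v_\eps(y):=u_\eps(\eps y)$, $\tilde\vartheta_\eps(y):=\eps^d\vartheta_\eps(\eps y)$. A change of variable gives
\[
E_{\eps,a}(\vartheta_\eps,u_\eps;B_r)=\int_{B_{r/\eps}}\bigl[|\nabla v_\eps|^p+(1-v_\eps)^2\bigr]\,dy+\frac{1}{\eps^{d+1}}\int_{B_{r/\eps}}v_\eps|\tilde\vartheta_\eps|^2\,dy,
\]
with $v_\eps\geq a\eps^{d+1}$, $\|\tilde\vartheta_\eps\|_1=m$, $\supp\tilde\vartheta_\eps\subset B_{\tilde r/\eps}$. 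Under the energy bound $E_{\eps,a}\leq C$, the set $D_\eps:=\{v_\eps<1/2\}$ has bounded rescaled volume, so I would define $\omega_d\hat r_\eps^d:=|D_\eps|$. A concentration argument then shows $\hat r_\eps$ stays in a compact subset of $(0,+\infty)$: if $\hat r_\eps\dw 0$ the mass $m$ either concentrates on a vanishing volume or escapes into $\{v_\eps\geq 1/2\}$, both of which blow up $\eps^{-(d+1)}\!\int v_\eps|\tilde\vartheta_\eps|^2$. Extract a subsequence with $\hat r_\eps\to\hat r_*\in(0,\infty)$.

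\textbf{Three contributions.} On $D_\eps$, since $v_\eps/\eps^{d+1}\geq a$, Cauchy--Schwarz yields
\[
\frac{1}{\eps^{d+1}}\int_{D_\eps}v_\eps|\tilde\vartheta_\eps|^2\,dy\,\geq\, a\,\|\tilde\vartheta_\eps\|_{L^2(D_\eps)}^2\,\geq\,\frac{a\,(m-o(1))^2}{\omega_d\hat r_\eps^d}=\frac{a\,m^2}{\omega_d\hat r_*^d}+o(1),
\]
the $o(1)$ encoding the escaping mass $\|\tilde\vartheta_\eps\|_{L^1(D_\eps^c)}$ which vanishes by the previous step. On the same $D_\eps$ a standard Modica--Mortola type estimate (using the gradient term to force $v_\eps$ close to $0$ or $1$ outside a thin layer) gives $\int_{D_\eps}(1-v_\eps)^2\geq \omega_d\hat r_\eps^d+o(1)$. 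On the complement $B_{r/\eps}\setminus D_\eps$ the field $v_\eps$ transitions from $\geq 1/2$ to $1$; a rearrangement argument producing a radial profile $V_\eps(t)$ with $V_\eps(\hat r_\eps)\to 0$ and $V_\eps(t)\to 1$ as $t\to\infty$, together with lower semicontinuity of $q_\infty^d(0,\cdot)$, provides the bound $(d-1)\omega_d\,q_\infty^d(0,\hat r_*)+o(1)$. Summing the three inequalities and infimizing over $\hat r_*\in(0,\infty)$ gives $\liminf_{\eps\dw 0}f_{\eps,a}^d(m,r,\tilde r)\geq f_a^d(m)$.

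\textbf{Universal lower bound and main obstacle.} Dropping the nonnegative first term in~\eqref{Def: reducedfd} gives $f_a^d(m)\geq \kappa(d,p):=\min_{\hat r>0}\bigl[\omega_d\hat r^d+(d-1)\omega_d q_\infty^d(0,\hat r)\bigr]$. The substitution $s=t/\hat r$, $w(s)=v(\hat r s)$ in~\eqref{eq: ginfinito} yields
\[
q_\infty^d(0,\hat r)=\inf_w\Bigl\{\hat r^{d-p}\int_1^{\infty}|w'|^p s^{d-1}\,ds+\hat r^d\int_1^{\infty}(1-w)^2 s^{d-1}\,ds\Bigr\},
\]
so $q_\infty^d(0,\hat r)\to+\infty$ as $\hat r\dw 0$ (because $p>d$ and admissible profiles must transit from $0$ to $1$), while $\omega_d\hat r^d\to\infty$ as $\hat r\to\infty$; continuity and strict positivity on $(0,\infty)$ force $\kappa(d,p)>0$. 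The main obstacle lies in the transition step: justifying the radial reduction on $B_{r/\eps}\setminus D_\eps$ with matching boundary value $0$ on $\partial D_\eps$ in the limit when $v_\eps$ is not radial, and ensuring the escaping mass $\|\tilde\vartheta_\eps\|_{L^1(D_\eps^c)}$ does not spoil the Cauchy--Schwarz estimate. The choice of threshold $1/2$ is not essential and could be replaced by a superlevel set chosen via a coarea-type argument.
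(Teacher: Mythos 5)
Your overall architecture (rescale by $\eps$, isolate a core of radius $\hat r_\eps$ where $u_\eps$ is small, apply Cauchy--Schwarz to the mass term on the core, read off a transition cost $q^d_\oo$ on the complement, then minimize over $\hat r$) is the same as the paper's, which works with the radial nondecreasing minimizer of the auxiliary problem $\ov f^d_{\eps,a}$ (Lemmas~\ref{lemmaA: radial symmetry} and~\ref{lemmaA: equivalenceE}). The genuine gap is in your treatment of the core with the \emph{fixed} threshold $1/2$. The potential $(1-v)^2$ is single-well, with its only minimum at $v=1$: nothing in the energy forces $v_\eps$ to be close to $0$ on its own sublevel set, so the ``Modica--Mortola type estimate'' you invoke does not exist here, and the claimed bound $\int_{D_\eps}(1-v_\eps)^2\geq\om_d\hat r_\eps^d+o(1)$ is false (take $v_\eps$ close to $1/2$ on $D_\eps$; you only get the factor $1/4$). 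For the same reason the rearranged profile satisfies $V_\eps(\hat r_\eps)=1/2$ by definition of $D_\eps$, not $V_\eps(\hat r_\eps)\to0$, so your transition term is $q^d_\oo(1/2,\hat r_*)$, not $q^d_\oo(0,\hat r_*)$. Summing, your scheme yields only $\tfrac{am^2}{\om_d\hat r_*^d}+\tfrac14\om_d\hat r_*^d+(d-1)\om_d q^d_\oo(1/2,\hat r_*)$, which is strictly weaker than $f^d_a(m)$. The repair is not a ``coarea-type'' choice of level but the paper's variable threshold: define $r_\xi$ as the crossing radius of level $\xi$, obtain $(1-\xi)^2\om_d\hat r^d$ and $q^d_\oo(\xi,\hat r)$, and send $\xi\dw0$ \emph{after} $\eps\dw0$; this requires the continuity of $\xi\mapsto q^d_\oo(\xi,\hat r)$ at $\xi=0$, which is a separate nontrivial ingredient proved in Lemma~\ref{lemmaA: propG}.

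Your argument for $\kappa>0$ also rests on a false premise. Since $p>d$, the weight $\int_1^\oo s^{-(d-1)/(p-1)}\di s$ diverges, H\"older gives no positive lower bound on $\int_1^\oo |w'|^p s^{d-1}\di s$ over admissible profiles, and in fact $q^d_\oo(0,\hat r)$ does \emph{not} blow up as $\hat r\dw0$: extending any fixed admissible profile by $0$ on $(\hat r,1)$ shows it is bounded, and by Lemma~\ref{lemmaA: propG} it is nondecreasing in $\hat r$ with positive limit $q^d_\oo(0,0)$. Positivity of $\kappa$ comes from $q^d_\oo(0,0)>0$, which is exactly where $p>d$ enters: any competitor reaching the value $1/2$ at some radius $\rho$ pays a gradient cost near the origin that blows up as $\rho\dw0$ (a capacity-type estimate, finite transition cost only because $p>d$) plus a potential cost $\rho^d/(4d)$, and the two cannot vanish simultaneously. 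With that in hand, your choice $\kappa=\min_{\hat r>0}\lt[\om_d\hat r^d+(d-1)\om_d q^d_\oo(0,\hat r)\rt]$ (drop the nonnegative mass term) is fine, and indeed slightly sharper than the paper's $\kappa=(d-1)\om_d q^d_\oo(0,0)$.
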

\begin{proposition}\label{prop: flimsup}
For fixed $m>0$ let $r_*$ be the minimizing radius in the definition of $f^d_a(m)$~\eqref{Def: reducedfd}. For any $\d>0$ and $ \eps$ small enough there exist a function $\vartheta\e= c\mathbf{1}_{B_{r_*\eps}}$ with $c>0$  such that $\int_{B_r}\vartheta\e=m$ and a nondecreasing radial function $u\e:B_r\mapsto[\eta,1]$ such that $u\e(0)=\eta$, $u\e=1$ on $\de B_r$ and
\begin{equation}\label{eq: n-1recovery}
\quad E_{\eps,a}(\vartheta\e,u\e;B_r)\leq f^d_a(m)+\d.
\end{equation} 
\end{proposition}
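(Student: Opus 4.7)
Formula~\eqref{Def: reducedfd} decomposes $f_a^d(m)$ into the sum of three competing contributions, and I plan to build a radial three region ansatz on $B_r$, each region asymptotically reproducing one summand. Given $\d>0$, fix an auxiliary tolerance $\d'>0$. Set
\[
c=\frac{m}{\om_d r_*^d\eps^d}, \qquad \vartheta\e = c\,\ind_{B_{r_*\eps}},
\]
so that $\|\vartheta\e\|_1=m$. Select a nondecreasing profile $v:[r_*,+\oo)\rw[0,1]$ with $v(r_*)=0$, $v\equiv 1$ on $[R,+\oo)$ for some $R=R(\d')>r_*$, and
\[
\int_{r_*}^{+\oo}t^{d-1}\bigl[|v'(t)|^p+(1-v(t))^2\bigr]\dt \leq q^d_{\oo}(0,r_*)+\d'.
\]
Let $\tilde v:=\max(v,\eta)$, and for $\eps$ small enough that $R\eps\leq r$ define
\[
u\e(x):=\begin{dcases}\eta & \mbox{if } |x|\leq r_*\eps,\\ \tilde v(|x|/\eps) & \mbox{if } r_*\eps\leq|x|\leq R\eps,\\ 1 & \mbox{if } R\eps\leq |x|\leq r.\end{dcases}
\]
By construction $u\e$ is radial, nondecreasing, lies in $W^{1,p}(B_r,[\eta,1])$, and satisfies $u\e(0)=\eta$ and $u\e\equiv 1$ on $\de B_r$. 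The intuition is that $\vartheta\e$ will reconstruct the Dirac mass concentrated in $B_{r_*\eps}$, the phase field $u\e$ is pinned at the obstacle value $\eta=a\eps^{d+1}$ to minimize the product $u|\vartheta|^2/\eps$, and the outward transition from $\eta$ to $1$ follows a rescaled near optimizer of the profile problem~\eqref{eq: ginfinito}.

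\textbf{Energy evaluation.}
The functional $E_{\eps,a}(\vartheta\e,u\e;B_r)$ splits into three pieces. On $B_{r_*\eps}$ the gradient vanishes, $u\e$ and $\vartheta\e$ are constant, and a direct calculation yields
\[
\int_{B_{r_*\eps}}\lt[\frac{(1-\eta)^2}{\eps^d}+\frac{\eta|\vartheta\e|^2}{\eps}\rt]\dx \;=\; (1-\eta)^2\om_d r_*^d+\frac{am^2}{\om_d r_*^d},
\]
which asymptotically produces the first two summands of~\eqref{Def: reducedfd}. On the middle annulus $\vartheta\e\equiv 0$, and the radial change of variables $s=|x|/\eps$ reduces the remaining integral to a dimensional multiple of $\int_{r_*}^{R}s^{d-1}[|\tilde v'|^p+(1-\tilde v)^2]\di s$; since $\tilde v$ coincides with $v$ except on a set on which $\eta\dw 0$, this matches the third summand $(d-1)\om_d q^d_{\oo}(0,r_*)$ of~\eqref{Def: reducedfd} up to an $O(\d'+\eta)$ excess. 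On the outer annulus the integrand is identically zero. Adding the three contributions and choosing first $\d'$ small and then $\eps$ small enough delivers $E_{\eps,a}(\vartheta\e,u\e;B_r)\leq f_a^d(m)+\d$.

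\textbf{Main obstacle.}
The delicate step is producing a profile $v$ that is simultaneously nondecreasing, satisfies $v(r_*)=0$, reaches $1$ at a finite radius $R$, and is a $\d'$ optimizer of the open ended problem~\eqref{eq: ginfinito} (in which only the asymptotic condition $v\rw 1$ at $+\oo$ is imposed). Starting from any $\d'/2$ optimizer, a monotone rearrangement argument yields a nondecreasing competitor without increasing the energy, since any ``bump'' costs extra gradient energy without compensating decrease in the double well term $(1-v)^2$. The integrability of $t^{d-1}(1-v)^2$ near $+\oo$ then forces $v\geq 1-\d''$ on some half line $[\rho,+\oo)$, so one may glue an affine transition from $v(\rho)$ up to $1$ on a short interval at a cost controlled by $\d''$. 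Finally the obstacle clipping $\tilde v=\max(v,\eta)$ alters the profile only on a vanishingly small initial set as $\eta\dw 0$, contributing $o(1)$ to the energy. All of these errors are uniform in $\eps$ and can be absorbed in $\d$ once $\d'$ and $\eps$ are chosen small enough.
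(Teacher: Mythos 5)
Your construction is correct and essentially coincides with the paper's: the same $\vartheta\e=\frac{m}{\om_d (r_*\eps)^d}\ind_{B_{r_*\eps}}$, the same three-region radial ansatz for $u\e$ (pinned at $\eta$ on the core ball, a rescaled 1D transition on an annulus, equal to $1$ outside), and the same energy bookkeeping reproducing the three terms of~\eqref{Def: reducedfd}. The only difference is technical: the paper uses the exact minimizer of the finite-window problem $q^d(\eta;(r_*,r/\eps))$ and invokes Lemma~\ref{lemmaA: propG} to compare it with $q^d_{\oo}(0,r_*)$, whereas you truncate a near-minimizer of the infinite problem; that truncation is sound, but since the weight $t^{d-1}$ grows, the claim that the affine ramp costs only $O(\d'')$ really rests on the decay $1-v(\rho)\lesssim \rho^{-d/2}$ forced by monotonicity and the finiteness of $\int t^{d-1}(1-v)^2\dt$, which is worth stating explicitly.
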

\begin{proposition}\label{prop: fproprieties}
The function $f^d_a$ is continuous in $(0,+\infty)$, increasing, sub-additive and $f^d_a(0)=0$.
\end{proposition}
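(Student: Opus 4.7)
The plan is to verify the four assertions in turn. I would write $f^d_a(m) = \min_{\hat r > 0} \Phi(m, \hat r)$ with
\begin{equation*}
\Phi(m, \hat r) := \frac{a\, m^2}{\omega_d \hat r^d} + \omega_d \hat r^d + (d-1)\omega_d\, q^d_\infty(0, \hat r).
\end{equation*}
The identity $f^d_a(0) = 0$ is just the definition. Monotonicity follows from the observation that only the first term of $\Phi$ depends on $m$, and it is strictly increasing there: picking $\hat r_2$ minimizing $\Phi(m_2, \cdot)$ (whose existence comes from coercivity, $\Phi(m_2, \hat r) \to +\infty$ as $\hat r \to 0^+$ or $+\infty$, together with the lower semicontinuity of $q^d_\infty(0, \cdot)$ established in the appendix), one has $f^d_a(m_1) \leq \Phi(m_1, \hat r_2) \leq \Phi(m_2, \hat r_2) = f^d_a(m_2)$ for $0 \leq m_1 < m_2$.

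Subadditivity is the main obstacle, since no algebraic manipulation of the defining formula seems to yield a clean subadditivity for the map $\hat r \mapsto q^d_\infty(0, \hat r)$. I would instead route the argument through the $\eps$-level functional. Fix $r > \tilde r > 0$ and $\delta > 0$, then choose a small $\rho > 0$ and centers $x_1, x_2$ such that the balls $B_\rho(x_1), B_\rho(x_2)$ are disjoint, contained in $B_r$, with $|x_i| + \rho < \tilde r$. By Proposition~\ref{prop: flimsup} applied on $B_\rho$, for $\eps$ small there exist radial pairs $(\vartheta_{i,\eps}, u_{i,\eps})$ with $\|\vartheta_{i,\eps}\|_1 = m_i$, $u_{i,\eps} \equiv 1$ on $\partial B_\rho$, $\supp \vartheta_{i,\eps} \subset B_{r_{i,\star}\eps}$, and $E_{\eps,a}(\vartheta_{i,\eps}, u_{i,\eps}; B_\rho) \leq f^d_a(m_i) + \delta$. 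I would glue them by superposition: set $\vartheta_\eps(x) := \vartheta_{1,\eps}(x - x_1) + \vartheta_{2,\eps}(x - x_2)$, and let $u_\eps(x) := u_{i,\eps}(x - x_i)$ on $B_\rho(x_i)$, extended by $1$ on $B_r \setminus (B_\rho(x_1) \cup B_\rho(x_2))$. The matching boundary values $u_{i,\eps}|_{\partial B_\rho} = 1$ ensure that $(\vartheta_\eps, u_\eps) \in Y_{\eps,a}(m_1+m_2, r, \tilde r)$ (for $\eps$ small enough), and disjointness yields
\begin{equation*}
E_{\eps,a}(\vartheta_\eps, u_\eps; B_r) = E_{\eps,a}(\vartheta_{1,\eps}, u_{1,\eps}; B_\rho) + E_{\eps,a}(\vartheta_{2,\eps}, u_{2,\eps}; B_\rho) \leq f^d_a(m_1) + f^d_a(m_2) + 2\delta.
\end{equation*}
Since $E_{\eps,a}(\vartheta_\eps, u_\eps; B_r) \geq f^d_{\eps,a}(m_1+m_2, r, \tilde r)$, combining with the liminf bound \eqref{eq: n-1liminf} from Proposition~\ref{prop: structurefm} and letting $\delta \downarrow 0$ yields $f^d_a(m_1+m_2) \leq f^d_a(m_1) + f^d_a(m_2)$.

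Finally, continuity on $(0, +\infty)$ splits into two semicontinuities. Upper semicontinuity is immediate since $f^d_a$ is the pointwise infimum of the continuous functions $m \mapsto \Phi(m, \hat r)$. For the matching lower semicontinuity at $m_0 > 0$, I would take $m_n \to m_0$ and minimizers $\hat r_n$ of $\Phi(m_n, \cdot)$; the bound $\omega_d \hat r_n^d \leq f^d_a(m_n)$ combined with the USC already shown bounds $(\hat r_n)$ above, while $a m_n^2/(\omega_d \hat r_n^d) \leq f^d_a(m_n)$ together with $m_n \to m_0 > 0$ bounds it away from $0$. Extracting a subsequence $\hat r_n \to \hat r_\star \in (0, +\infty)$, the joint continuity of $\Phi$ then gives $\lim_n f^d_a(m_n) = \Phi(m_0, \hat r_\star) \geq f^d_a(m_0)$, completing the proof.
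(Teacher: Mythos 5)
Your proposal is correct in substance and overlaps heavily with the paper's proof, but the routing differs in two of the three nontrivial items, and one step needs a small repair. For subadditivity you and the paper do essentially the same thing: glue two near-optimal competitors supported in disjoint balls and use that the limit of the $\eps$-level problem is independent of the radii. The paper works with exact minimizers of the unconstrained problem $\ov f^d_{\eps,a}(m_j,r)$ placed in a larger ball $B_{2r+1}$, while you invoke Proposition~\ref{prop: flimsup} to manufacture competitors on small balls inside a fixed $B_r$ and close the loop with~\eqref{eq: n-1liminf}; both are fine. For monotonicity your argument is more elementary than the paper's: you read it off the formula~\eqref{Def: reducedfd}, since only the term $a\,m^2/(\om_d \hat r^d)$ depends on $m$ and it is increasing, whereas the paper rescales $\vartheta$ at the $\eps$-level and passes to the limit. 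Your version is shorter and buys the same conclusion.

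The one genuine (though easily fixed) gap is in the lower-semicontinuity half of your continuity argument. You conclude via ``the joint continuity of $\Phi$'', i.e.\ $\Phi(m_n,\hat r_n)\to\Phi(m_0,\hat r_\star)$, which requires continuity of $\hat r\mapsto q^d_\oo(0,\hat r)$. The appendix (Lemma~\ref{lemmaA: propG}) establishes monotonicity of $q^d_\oo$ in the radius and continuity in $\xi$, but \emph{not} continuity in $\hat r$; a nondecreasing function can jump, so this step is not covered by what is proved. The repair is immediate and is in fact exactly what the paper does: since only the first term of $\Phi$ depends on $m$,
\begin{equation}
\Phi(m_n,\hat r_n)\;=\;\Phi(m_0,\hat r_n)+\frac{a\,(m_n^2-m_0^2)}{\om_d\,\hat r_n^{\,d}}\;\geq\; f^d_a(m_0)-\frac{a\,|m_n^2-m_0^2|}{\om_d\,\hat r_n^{\,d}},
\end{equation}
and your lower bound on $\hat r_n$ kills the error term as $n\to\oo$; no continuity of $q^d_\oo(0,\cdot)$ is needed. (The paper packages this as a local Lipschitz estimate $|f^d_a(m_1)-f^d_a(m_2)|\leq C(m_1,m_2)|m_1-m_2|$ using the same two-sided bound on the minimizing radii that you derive.) A last cosmetic remark: both you and the paper take for granted that the infimum over $\hat r$ in~\eqref{Def: reducedfd} is attained; your parenthetical attributes lower semicontinuity of $q^d_\oo(0,\cdot)$ to the appendix, which is not literally there, but since the paper itself writes $\min$ and uses the minimizing radius $r_*$ in Proposition~\ref{prop: flimsup}, you are on equal footing, and none of your arguments actually require exact minimizers rather than $\delta$-minimizers.
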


\section{The $1$-dimensional problem}\label{Section: 1Problem}
\subsection{Compactness}\label{subSection: 1Compactness}
We prove the compactness Theorem~\ref{teo: 1compactness} for the family of functionals $(\F_{\eps,a})\e$. Let us  consider a family of functions $(\s\e,u\e)_{\eps\dw0}$, such that $(\s\e,u\e)\in X\e(\Om)$ and 
\begin{equation}\label{energy bound}
\F_{\eps,a}(\s\e,u\e;\Om)\,\leq\, F_0.
\end{equation} 
As a first step we prove:
\begin{lemma}\label{lemma: mass bound}
Assume $a>0$. There exists $C\geq0$, only depending on $\Om$, $F_0$ and $a$ such that 
\begin{equation}
\label{uniform mass bound}
\int_{\Om} |\s\e |\,\leq\, C, \qquad \forall\,\eps.
\end{equation}  
As a consequence there exist a positive Radon measure $\mu\in(\R^n,\R_+)$ supported in $\ov\Om$ and a vectorial Radon measure $\s\in \M(\overline \Omega,\R^n)$ with $\nabla\cdot \s=\sum a_j\delta_{x_j}$ and $|\s|\leq\mu$  such that up to a subsequence
\begin{equation*}
u\e\to 1\mbox{ in }L^2(\Om),\qquad |\s\e|\, \stackrel{*}\rightharpoonup\, \mu \mbox{ in }\M(\R^n), \qquad \s\e\, \stackrel{*}\rightharpoonup\, \s \mbox{ in }\M(\R^n,\R^n).
\end{equation*}
\end{lemma}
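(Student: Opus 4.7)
The plan is to first establish the uniform mass bound \eqref{uniform mass bound} and then read the remaining assertions off from standard weak-$*$ compactness and lower semicontinuity arguments. To bound $\int_\Om|\sigma\e|$ the natural idea is to apply Cauchy--Schwarz by writing $|\sigma\e|=(u\e|\sigma\e|^2/\eps)^{1/2}(\eps/u\e)^{1/2}$: the first factor is controlled by the last term of $\F_{\eps,a}$, while the second factor is only a priori controlled through the crude bound $u\e\geq\eta=a\eps^n$, which would give $\int_\Om \eps/u\e\leq|\Om|/(a\eps^{n-1})\to\infty$. The hard part, and the main obstacle, is therefore to localize this Cauchy--Schwarz estimate so that the degenerate lower bound $u\e\geq a\eps^n$ is used only on a set of very small measure.

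\textbf{Key splitting.} I would introduce the bad set $A\e:=\{u\e\leq 1/2\}$, on which $(1-u\e)^2\geq 1/4$; the Ambrosio--Tortorelli term immediately yields $|A\e|\leq 4F_0\,\eps^{n-1}$. On the good set $\Om\setminus A\e$, where $u\e\geq 1/2$, Cauchy--Schwarz gives
\begin{equation*}
\int_{\Om\setminus A\e}|\sigma\e|\dx\;\leq\;\sqrt{F_0}\left(\int_{\Om\setminus A\e}\frac{\eps}{u\e}\dx\right)^{1/2}\leq\sqrt{2F_0\,|\Om|\,\eps}\;\stackrel{\eps\dw0}{\longto}\;0.
\end{equation*}
On $A\e$ I would use instead $u\e\geq a\eps^n$ together with the volume bound on $A\e$:
\begin{equation*}
\int_{A\e}\frac{\eps}{u\e}\dx\;\leq\;\frac{|A\e|}{a\,\eps^{n-1}}\;\leq\;\frac{4F_0}{a},
\end{equation*}
so that Cauchy--Schwarz again yields $\int_{A\e}|\sigma\e|\dx\leq 2F_0/\sqrt{a}$. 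Summing the two contributions produces \eqref{uniform mass bound} with a constant depending only on $F_0$, $a$, and $|\Om|$; this is where the hypothesis $a>0$ is essential.

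\textbf{Conclusion.} The $L^2$ convergence $u\e\to 1$ is then immediate, since $\int_\Om(1-u\e)^2\dx\leq\eps^{n-1}F_0\to 0$ (recall $n\geq 2$). The uniform mass bound makes $(\sigma\e)$ and $(|\sigma\e|)$ sequences of vector/positive Radon measures supported in $\overline\Om$ with bounded total variation, so weak-$*$ compactness produces, along a common subsequence, limits $\s\in\M(\overline\Om,\R^n)$ and $\mu\in\M(\overline\Om,\R_+)$; the inequality $|\s|\leq\mu$ follows from the classical lower semicontinuity of the total variation under weak-$*$ convergence. Finally, weak-$*$ convergence of $\sigma\e$ to $\s$ in $\M(\R^n,\R^n)$ passes through the divergence, and since the right-hand side of \eqref{point sources eps} converges to $\sum_j c_j\delta_{x_j}$ in $\D'(\R^n)$, we obtain $\nabla\cdot\s=\sum_j c_j\delta_{x_j}$, proving $(\s,1)\in X$.
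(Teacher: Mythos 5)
Your proof is correct and follows essentially the same route as the paper: both split $\Om$ into a sublevel set of $u\e$ (your $\{u\e\leq 1/2\}$ is the paper's $\Om\setminus\Om_\lambda$ with $\lambda=1/2$), use the Ambrosio--Tortorelli term to control the bad set where only $u\e\geq a\eps^n$ is available, apply Cauchy--Schwarz (the paper uses Young's inequality on the bad set, an immaterial variant), and then conclude by weak-$*$ compactness and lower semicontinuity exactly as in the paper's Steps 2--3.
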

\begin{proof}~We divide the proof into three steps.\\

\medskip
\noindent\textit{ Step 1.} We start by proving the uniform bound~\eqref{uniform mass bound}. Let $\lambda\in(0,1]$ and let 
\begin{equation*}
\Om_{\lambda}\,:=\, \lt\{x\in \Om\, :\, u\e>\lambda\rt\}.
\end{equation*}
Being $\s\e$ square integrable we identify the measure $\s\e$ with its density with respect to $\mathscr{L}^n$. Therefore splitting the total variation of $\s\e$, we write
\begin{equation*}
 |\s\e|(\Om)=\int_{\Om}|\s\e|\dx=\int_{\Om_\lambda}|\s\e|\dx+\int_{\Om\setminus\Om_{\lambda}}|\s\e|\dx.
 \end{equation*} 
 We estimate each term separately. By Cauchy-Schwarz inequality we have
\begin{equation*}
\int_{\Om_\lambda} |\s\e|\, \leq\, \lt(\int_{\Om_\lambda} \dfrac{u\e|\s\e|^2}{\eps}\rt)^{1/2}\lt(\int_{\Om_\lambda} \dfrac{\eps}{u\e}\rt)^{1/2}.
\end{equation*}
Since $\lambda<u\e<1$ on $\Om_\lambda$ and $\int_{\Om_\lambda} (u\e|\s\e|^2)/(\eps)\dx$ being bounded by $\F_{\eps,a}(\s\e,u\e)$ from the previous we get
\begin{equation*}
\int_{\Om_\lambda} |\s\e|\, \leq\, \lt(\int_{\Om_\lambda} \dfrac{u\e|\s\e|^2}{\eps}\rt)^{1/2}\sqrt{\frac{|\Om|\eps}{\lambda}} \, \leq\, \sqrt{\frac{|\Om|\;\eps\;F_0}{\lambda}}\;.
\end{equation*}
Next, in $\Om\sm\Om_\lambda$, by Young inequality, we have 
\begin{equation*}
2\;\int_{\Om\sm\Om_\lambda} |\s\e|
\, \leq\, 
	\int_{\Om\sm\Om_\lambda}\dfrac{u\e|\s\e|^2}{\eps}
	+ \int_{\Om\sm\Om_\lambda} \dfrac{\eps}{u\e}.
	\end{equation*}
Using $u\e\geq\eta(\eps)$, $\eta/\eps^{n}= a$ and $(1-\lambda)^2\leq (1-u\e)^2$ in $\Om\sm\Om_\lambda$, we obtain 
\begin{equation*}
\int_{\Om\sm\Om_\lambda} |\s\e|
\, \leq\, \frac{1}{2}\int_{\Om}\dfrac{u\e|\s\e|^2}{\eps}	+ \dfrac{\eps^{n}}{2\,\eta\;(1-\lambda)^2} \int_{\Om} \dfrac{(1-u\e)^2}{\eps^{n-1}}\leq \frac{F_0}{2}+\frac{F_0}{2\,a\,(1-\lambda)^2}.
	\end{equation*}
Hence
\begin{equation*}
 |\s\e|(\Om)\leq \frac{F_0}{2}\,+\,\frac{F_0}{2\;a\;(1-\lambda)^2}\,+\,\;\sqrt{\frac{|\Om|\;\eps\;F_0}{\lambda}}.
 \end{equation*} 
As $a>0$, this yields~\eqref{uniform mass bound}. \medskip

\noindent\textit{ Step 2.} We easily see from $\int_{\Om}(1-u\e)^2\,\leq\, F_0\eps^{n-1}$ that $u\e\to 1$ in $L^2(\Om)$ as $\eps\dw0$.
\medskip

\noindent\textit{ Step 3.} The existence of the Radon measures $\mu$ and $\s$ such that, up to extraction,  $|\s\e|\, \stackrel{*}\rightharpoonup\, \mu$ and $\s\e\, \stackrel{*}\rightharpoonup\, \s$ follows from~\eqref{uniform mass bound}. The properties on the support of $\mu$, on the divergence  of $\s$ and  the fact that $|\s|\leq\mu$ follow from the respective properties of $\s\e$.    
\end{proof}

We have just showed that the limit $\s$ of a family $(\s\e,u\e)\e$ equibounded in energy is bounded in mass. In what follows, we assume $a\geq0$ and that $\s\e$ is bounded in mass. We show that the limiting $\s$ is rectifiable.
\begin{proposition}
\label{proposition: length bound}
Assume $a\geq0$ and that the conclusions of Lemma~\ref{lemma: mass bound} hold true. There exists a Borel subset $\Sigma$ with finite length and a Borel measurable function $\nu\,:\,\Sigma\to S^{n-1}$ such that $\sigma = \nu |\sigma|\restr \Sigma$. Moreover, we have the following estimate,
\begin{equation*}
\H^1(\Sigma)\,\leq\, C_* F_0,
\end{equation*}
where the constant $C_* \geq 0$ only depends on $d$ and $p$. 
\end{proposition}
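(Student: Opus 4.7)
The plan is to use the uniform lower bound $f^{n-1}_a(m)\geq \kappa$ from Proposition~\ref{prop: structurefm} together with $(n-1)$-dimensional slicing, producing a Borel set of finite $\H^1$-measure that carries $\s$. Rectifiability will then follow from Theorem~\ref{teo: 1rectifiable}.

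First I would slice $\Om$ along each coordinate direction. For $i\in\{1,\ldots,n\}$, writing $x=(x_i,x'_i)\in\R\times\R^{n-1}$, Fubini gives
\[
\F_{\eps,a}(\s\e, u\e;\Om) \;\geq\; \int_{\R} E_{\eps,a}\bigl(\vartheta^{i,t}\e, u^{i,t}\e; \Om\cap\{x_i=t\}\bigr)\,dt,
\]
where $\vartheta^{i,t}\e$ is the slice of $|\s\e|$ on the hyperplane $\{x_i=t\}$ and $u^{i,t}\e$ is its (a.e.\ defined) Sobolev trace. The inequality uses $|\nabla' u|^p\leq|\nabla u|^p$; with the exponents matching at $d=n-1$, the integrand is exactly the reduced energy from Subsection~\ref{subSection: reducedProblem}.

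Next, I would localize the reduced-problem lower bound on each slice. Covering the support of $\vartheta^{i,t}\e$ by a maximal family of pairwise disjoint balls $B_{r_j}(p_j)\subset\{x_i=t\}$, each enclosing positive mass $m_j>0$ concentrated on a sub-ball $B_{\tilde r_j}(p_j)$, Proposition~\ref{prop: structurefm} yields
\[
E_{\eps,a}\bigl|_{B_{r_j}(p_j)} \;\geq\; f^{n-1}_{\eps,a}(m_j,r_j,\tilde r_j) \;\xrightarrow[\eps\dw 0]{}\; f^{n-1}_a(m_j)\;\geq\; \kappa>0.
\]
Summing over these disjoint balls, integrating in $t$, and then over $i=1,\ldots,n$ gives, for $\eps$ small enough,
\[
\sum_{i=1}^n \int_{\R} N_i(t,\eps)\,dt \;\leq\; \frac{2n\,F_0}{\kappa},
\]
where $N_i(t,\eps)$ is the number of concentration balls on the $i$-th slice at level $t$. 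Passing to the weak-$*$ limit and applying Fatou bounds $\sum_i\int_\R \#\bigl(\Sigma\cap\{x_i=t\}\bigr)\,dt$ in terms of $F_0$, where $\Sigma:=\supp\s$.

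A standard geometric argument (of Crofton / Eilenberg-Besicovitch type, combined with White's structure theorem for flat chains of finite mass and finite boundary) then upgrades these $n$ slice bounds to the estimate $\H^1(\Sigma)\leq C_* F_0$ with $C_*=C_*(n,p)$. Since $\s$ is a normal 1-current carried by $\Sigma$, Theorem~\ref{teo: 1rectifiable} gives the decomposition $\s=\nu|\s|\restr\Sigma$ with the required measurable orientation $\nu:\Sigma\to S^{n-1}$. The main obstacle I expect is the localization step on slices: one must ensure the cover-by-balls construction captures essentially all the mass of $\vartheta^{i,t}\e$ uniformly in $\eps$, while controlling the convergence error $f^{n-1}_{\eps,a}\to f^{n-1}_a$ independently of $m_j$ bounded away from $0$.
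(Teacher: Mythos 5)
There are genuine gaps in this proposal, concentrated in three places. First, you set $\Sigma:=\supp\s$, but the closed support is the wrong carrying set: a measure concentrated on countably many segments of finite total length can have support equal to all of $\ov\Om$ (place shrinking segments densely), so $\H^1(\supp\s)\leq C_*F_0$ is in general false and is in any case not what Theorem~\ref{teo: 1rectifiable} needs. The paper instead \emph{constructs} $\Sigma$ as the set of points where $\s/|\s|$ has a limit $\nu(x)$ and where a doubling-type lower bound $|\s|(B_{r/4}(x))\geq\theta|\s|(B_r(x))$ holds along a sequence of radii, after showing (Lemma~\ref{lemma: Gamma negligible}) that the complementary bad set is $|\s|$-negligible. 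Second, your localization step applies $f^{n-1}_{\eps,a}(m_j,r_j,\tilde r_j)\to f^{n-1}_a(m_j)\geq\kappa$ with balls, radii and masses that all depend on $\eps$; Proposition~\ref{prop: structurefm} is a $\liminf$ statement for \emph{fixed} $(m,r,\tilde r)$, and nothing prevents the slice masses $m_j$ from degenerating to $0$ as $\eps\dw0$. You flag this yourself as "the main obstacle", but it is precisely the crux: the paper resolves it by using $\nb\cdot\s\e=0$ to show that the flux $g\e(t)$ through the slices of a small cylinder around a good point $x\in\Sigma$ is bounded below by a fixed $\tilde m>0$ independent of $\eps$ and $t$, and only then invokes the reduced problem with this fixed mass.

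Third, the final "Crofton / Eilenberg" upgrade from coordinate-slice counts to an $\H^1$ bound is circular. Integral-geometric (Favard-type) bounds control $\H^1$ only for rectifiable sets — a purely unrectifiable set can have zero integral-geometric measure and infinite $\H^1$ measure — while White's criterion (Theorem~\ref{teo: 1rectifiable}) requires the finite-$\H^1$ carrying set as an \emph{input}. So the slice counts alone cannot deliver $\H^1(\Sigma)\leq C_*F_0$. The paper avoids this entirely: Lemma~\ref{lemma: local length Sigma} gives $\liminf_{\eps\dw0}\F_{\eps,a}(\s\e,u\e;B(x,r_j))\geq \sqrt2\,\kappa\,r_j$ at every $x\in\Sigma\setminus\mathscr{S}$ along a sequence $r_j\dw0$, and then a single application of the Besicovitch covering theorem to these balls, together with the (essentially super-additive) set function $A\mapsto\liminf_\eps\F_{\eps,a}(\s\e,u\e;A)$, yields $\H^1(\Sigma)\leq C_*F_0$ directly, with no rectifiability assumption. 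You should replace the coordinate slicing by slicing in the direction $\nu(x)$ at each good point, prove the uniform flux lower bound there, and conclude by Besicovitch rather than Crofton.
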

\noindent This proposition together with Lemma~\ref{uniform mass bound} and Theorem~\ref{teo: 1rectifiable} leads to 
\begin{proposition}\label{prop: sigma1rectifiable}
$\s$ is a $1$-rectifiable vector measure and in particular $\Sigma $ is a countably $\H^1$-rectifiable set.
\end{proposition}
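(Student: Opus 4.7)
The plan is to simply verify the hypotheses of Theorem~\ref{teo: 1rectifiable} (White's rectifiability criterion restated for vector measures) and read off the conclusion. All the hard work has been done in Lemma~\ref{lemma: mass bound} and Proposition~\ref{proposition: length bound}; this proposition is essentially a bookkeeping step that combines them.

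First I would gather the four ingredients required by Theorem~\ref{teo: 1rectifiable}. From Lemma~\ref{lemma: mass bound} we have (i) the mass bound $|\s|(\Om)<+\infty$ and (ii) the identification $\nabla\cdot\s=\sum_{j=1}^{n_P}c_j\delta_{x_j}$, which is a finite (hence at most countable) sum of Dirac masses with finite total variation $|\nabla\cdot\s|(\Om)=\sum_j|c_j|<+\infty$. From Proposition~\ref{proposition: length bound} we obtain (iii) a Borel set $\Sigma\subset\overline{\Om}$ with $\H^1(\Sigma)\leq C_*F_0<+\infty$ and a Borel orientation $\nu:\Sigma\to\S^{n-1}$ such that $\s=\nu\,|\s|\restr\Sigma$; in particular $\s=\s\restr\Sigma$. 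All four hypotheses of Theorem~\ref{teo: 1rectifiable} are therefore satisfied.

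Applying that theorem we conclude that $\s$ is a rectifiable vector measure in the sense of Subsection~\ref{subSection: Currents}, that is, there exist a countably $\H^1$-rectifiable set $\Sigma^\star$, a Borel unit tangent $\nu^\star:\Sigma^\star\to\S^{n-1}$ and a Borel multiplicity $m:\Sigma^\star\to\R_+$ for which
\[
\s=m\,\nu^\star\,\H^1\restr\Sigma^\star.
\]
Since $|\s|(\Om\setminus\Sigma)=0$ by Proposition~\ref{proposition: length bound}, one can replace $\Sigma^\star$ by $\Sigma^\star\cap\Sigma$ without changing $\s$, so that the rectifiable carrier is contained in $\Sigma$; after discarding the $\H^1$-null complement, $\Sigma$ itself is countably $\H^1$-rectifiable, giving the second assertion.

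I do not anticipate a genuine obstacle here, since the proposition is purely a combination of previously established facts with a named black-box theorem; the only mild point to be careful about is aligning the set $\Sigma$ produced by Proposition~\ref{proposition: length bound} with the rectifiable carrier produced by Theorem~\ref{teo: 1rectifiable}, which is handled by the trivial intersection argument above.
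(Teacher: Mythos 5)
Your proposal is correct and follows exactly the route the paper takes: the paper itself states that the proposition follows from Lemma~\ref{lemma: mass bound}, Proposition~\ref{proposition: length bound} and Theorem~\ref{teo: 1rectifiable}, which is precisely the hypothesis-checking you carry out. Your extra remark aligning the carrier produced by the theorem with the set $\Sigma$ from Proposition~\ref{proposition: length bound} is a harmless and correct bit of bookkeeping that the paper leaves implicit.
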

\noindent The latter ensures that the limit couple $(\s,1)$ belongs to $X$ and concludes the proof of Theorem~\ref{teo: 1compactness}. We now establish Proposition~\ref{proposition: length bound}

\medskip
\textit{Sketch of the proof:} We first define $\Sigma$. Then we show in Lemma~\ref{lemma: local length  Sigma}  that for $x\in \Sigma$, we have $\liminf_{\eps\dw 0}\F_{\eps,a}(\s\e,u\e;B(x,r_j))\geq \kappa r_j$ for a sequence of radii $r_j\dw0$ and $\kappa>0$. The proof of the lemma is based on slicing and on the results of Appendix~\ref{appendix: Reduced Problem}. The proposition then follows from an application of the Besicovitch covering theorem.

\medskip
First we introduce the Borel set 
\begin{equation*}
\wtilde \Sigma\, :=\, \left\{ x\in \Om\,:\,\forall r>0,\ |\sigma|(B_r(x))>0\mbox{ and }\exists \nu=\nu(x)\in \S^{n-1}\, \mbox{such that } \nu=\lim_{r\dw 0}\frac{\sigma(B_r(x))}{|\sigma|(B_r(x))}\right\}.
\end{equation*}
We observe that by Besicovitch derivation theorem,
\begin{equation*}
\sigma = \nu |\sigma|\restr \wtilde \Sigma.
\end{equation*}
Next we fix $\theta\in (0,1/4^n)$ and define
\begin{equation*}
\Gamma\,:=\, \lt\{ x\in \wtilde \Sigma\, :\, \exists\,r_0>0\mbox{ such that} ,\ \dfrac{|\sigma|(B_{r/4}(x))}{|\sigma|(B_r(x))}\leq \theta \ \mbox{ for every } r\in (0,r_0] \rt\}.
\end{equation*}
We show that this set is $|\sigma|$-negligible.
\begin{lemma}\label{lemma: Gamma negligible}
We have $|\s|(\Gamma)=0$.
\end{lemma}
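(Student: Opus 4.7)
\textbf{Proof plan for Lemma~\ref{lemma: Gamma negligible}.} The plan is to exploit the hypothesis $\theta<1/4^n$ (i.e., $\theta$ strictly smaller than the scaling factor raised to the ambient dimension) to show that the mass of $|\sigma|$ on small balls centred at points of $\Gamma$ decays faster than $r^n$, hence the $n$-dimensional upper density of $|\sigma|$ vanishes on $\Gamma$. A standard Vitali/Besicovitch covering argument then forces $|\sigma|(\Gamma)=0$.

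\textbf{Step 1 (decomposition).} Since the radius $r_0$ appearing in the definition of $\Gamma$ depends on the point $x$, we slice $\Gamma$ into countably many Borel pieces on which $r_0$ is uniformly bounded below:
\[
\Gamma_k\,:=\,\lt\{ x\in\wtilde\Sigma\,:\, \frac{|\sigma|(B_{r/4}(x))}{|\sigma|(B_r(x))}\leq \theta \ \mbox{ for every } r\in(0,1/k]\rt\}, \qquad k\in\N.
\]
Each $\Gamma_k$ is Borel because $x\mapsto |\sigma|(B_r(x))$ is Borel measurable for each $r>0$, and $\Gamma=\bigcup_{k}\Gamma_k$. It thus suffices to prove $|\sigma|(\Gamma_k)=0$ for every fixed $k$.

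\textbf{Step 2 (iterated decay).} Write $\theta = 4^{-\alpha}$ with $\alpha>n$, which is possible since $\theta<4^{-n}$. For $x\in\Gamma_k$, iterating the defining inequality $j$ times gives
\[
|\sigma|\lt(B_{1/(k\cdot 4^{j})}(x)\rt)\,\leq\,\theta^{j}\,|\sigma|(B_{1/k}(x))\,\leq\,\theta^{j}\,|\sigma|(\ov\Om).
\]
Using the mass bound from Lemma~\ref{lemma: mass bound} and comparing an arbitrary radius $\rho\in(0,1/k)$ with the dyadic scale $1/(k\cdot 4^{j})\geq\rho>1/(k\cdot 4^{j+1})$, we obtain
\[
|\sigma|(B_{\rho}(x))\,\leq\, C_k\,\rho^{\alpha}, \qquad \forall\, x\in\Gamma_k,\ \forall\,\rho\in(0,1/k),
\]
with $C_k:=4^{\alpha}k^{\alpha}|\sigma|(\ov\Om)$. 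Since $\alpha>n$, this yields $\limsup_{\rho\dw0}\rho^{-n}|\sigma|(B_{\rho}(x))=0$ for every $x\in\Gamma_k$.

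\textbf{Step 3 (Vitali covering).} Fix $\eta>0$. By Step~2, for every $x\in\Gamma_k$ there exists $\delta_x\in(0,1/k)$ such that $|\sigma|(B_r(x))\leq \eta\, r^n$ for every $r\in(0,\delta_x)$; the sets $\Gamma_k^{\delta}:=\{x\in\Gamma_k\,:\,\delta_x>5\delta\}$ increase to $\Gamma_k$ as $\delta\dw0$. Applying the classical $5r$-covering lemma to the fine cover $\{B_r(x):x\in\Gamma_k^{\delta},\, r<\delta\}$, we extract a disjoint subfamily $\{B_{r_i}(x_i)\}$ such that $\Gamma_k^{\delta}\subset\bigcup_i B_{5r_i}(x_i)$ with $5r_i<\delta_{x_i}$. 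Then
\[
|\sigma|(\Gamma_k^{\delta})\,\leq\,\sum_i |\sigma|(B_{5r_i}(x_i))\,\leq\,5^n\eta\sum_i r_i^n\,\leq\,\frac{5^n\eta}{\om_n}\,\L^n\!\lt(\bigcup_i B_{r_i}(x_i)\rt)\,\leq\,\frac{5^n\eta}{\om_n}\,\L^n(\Om_1),
\]
where $\Om_1$ is a fixed bounded neighbourhood of $\Om$ (the balls $B_{r_i}(x_i)$ are disjoint and have radii $r_i<\delta<1$). Sending $\delta\dw 0$ and then $\eta\dw 0$ yields $|\sigma|(\Gamma_k)=0$, hence $|\sigma|(\Gamma)=0$.

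The main delicate point is the uniformity of the decay obtained in Step~2: without the decomposition into the pieces $\Gamma_k$, the constant $C$ in $|\sigma|(B_\rho(x))\leq C\rho^\alpha$ would depend on the (uncontrolled) radius $r_0(x)$, blocking the covering argument of Step~3. Once the uniform estimate is in place, the rest is standard measure theory.
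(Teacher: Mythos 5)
Your argument is correct and rests on exactly the same key estimate as the paper's proof: iterating the defining inequality of $\Gamma$ along radii $r/4^j$ gives $|\s|(B_\rho(x))\le C\rho^{\alpha}$ with $\alpha=(\ln 1/\theta)/(\ln 4)>n$, so the $n$-dimensional upper density of $|\s|$ vanishes at every point of $\Gamma$. The two proofs part ways only in the covering step. The paper keeps the constant point-dependent: for each $\xi>0$ and each $x\in\Gamma$ it merely selects one radius $r_\xi(x)$ with $|\s|(B_{r_\xi(x)}(x))\le\xi|B_{r_\xi(x)}(x)|$, and then invokes the Besicovitch covering theorem, which tolerates radii varying arbitrarily from point to point, to get $|\s|(\Gamma\cap B_R)\le N(n)\xi|B_{R+1}|$. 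You instead restore uniformity first, by slicing $\Gamma$ into the pieces $\Gamma_k$ on which $r_0\ge 1/k$, and then apply the elementary $5r$-covering lemma. Both routes are sound; yours trades the Besicovitch theorem for an extra (countable) decomposition and is arguably more elementary. Your closing remark that the decomposition is indispensable is, however, not accurate: Besicovitch removes the need for it entirely, and even within your own scheme the uniform constant $C_k$ already gives a $\delta_x$ bounded below on all of $\Gamma_k$ (namely $\delta_x=\min\{1/k,(\eta/C_k)^{1/(\alpha-n)}\}$), so the further truncation to $\Gamma_k^{\delta}$ is superfluous --- which also disposes of the minor issue that the sets $\Gamma_k^{\delta}$, defined through the non-canonical choice $x\mapsto\delta_x$, need not be Borel.
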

\begin{proof}
Let $x\in\Gamma$. Applying the inequality $|\sigma|(B_{r/4}(x))\leq \theta |\sigma|(B_r(x))$ with $r=r_k=4^{-k}r_0$, $k\geq 0$, we  get 
$|\sigma|(B_{r_k})\, \leq\, \theta^k|\sigma|(B_{r_0})$. Hence there exists $C\geq 0$ such that
\begin{equation*}
|\sigma|(B_r(x))\, \leq\, C r^{(\ln 1/\theta)/(\ln 4)}.
\end{equation*}
Noting, $\lambda=(\ln \frac{1}{\theta})/(\ln 4)$, we have by assumption $\lambda>n$. Therefore, for every $\xi>0$ there exists $r_\xi=r_\xi(x)\in (0,1)$ such that 
\begin{equation*}
|\sigma |(B_{r_\xi}(x))\, \leq\, \xi |B_{r_\xi}(x)| .
\end{equation*}
Now, for $R>0$, we cover $\Gamma\cap B_R$ with balls of the form $B_{r_\xi(x)}(x)$. Using Besicovitch covering theorem, we have 
\begin{equation*}
\Gamma\cap B_R\, \subset\, \cup_{j=1}^{N(n)} \mathcal{B}_j 
\end{equation*}
where $N(n)$ only depends on $n$ and each ${\cal B}_j$ is a (finite or countable) disjoint union of balls of the form $B_{r_\xi(x_k)}(x_k)$. Then we get
\begin{equation*}
|\sigma|(\Gamma\cap B_R)\, \leq\, \sum_{j=1}^{N(n)}|\sigma|({\cal B}_j)\, \leq\, N(n) \xi |{\cal B}_j|\, \leq\, N(n)|B_{R+1}|\xi.
\end{equation*}
Sending $\xi$ to 0 and then $R$ to $\infty$, we obtain $|\sigma|(\Gamma)=0$.
\end{proof}
Set $\Sigma\,:=\, \wtilde \Sigma\setminus \Gamma$, from Lemma~\ref{lemma: Gamma negligible}, we have $\sigma = \nu |\sigma|\restr \Sigma. $ Recall that $\mathscr{S}=\{x_1,\cdots,x_{n_P}\}$.
\begin{lemma}\label{lemma: local length Sigma}
For every $x\in \Sigma\setminus \mathscr{S}$, there exists a sequence $(r_j)=(r_j(x))\subset (0,1)$ with $r_j\dw0$ such that
\begin{equation*}
\liminf_{\eps\dw0} \F_{\eps,a}(\s\e,u\e;B(x,r_j))\, \geq\, \sqrt{2}\,\kappa\; r_j,
\end{equation*}
where $\kappa$ is the constant of Proposition~\ref{prop: structurefm}.
\end{lemma}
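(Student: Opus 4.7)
The plan is to combine a slicing of $\F_{\eps,a}$ along hyperplanes normal to $\nu(x)$ with the uniform lower bound $f^d_a\ge\kappa$ furnished by Proposition~\ref{prop: structurefm}.

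After a rigid motion I may assume $x=0$ and $\nu(x)=e_1$, and write $y=(y_1,y')\in\R\times\R^{n-1}$. Since $\mathscr S$ is finite with $0\notin\mathscr S$ and $0\notin\Gamma$, one can choose $(r_j)\dw 0$ so that $B_{r_j}(0)\cap\mathscr S=\emptyset$ and $|\s|(B_{r_j/4}(0))>\theta\,|\s|(B_{r_j}(0))>0$. For $\eps$ small enough (depending on $r_j$) the mollified sources $\rho\e(\cdot-x_i)$ have support disjoint from $B_{r_j}(0)$, so $\nb\cd\s\e\equiv 0$ on $B_{r_j}(0)$. Setting $D_{r,t}:=\{y'\in\R^{n-1}:t^2+|y'|^2<r^2\}$, $\vartheta^\eps_t(y'):=\s\e(t,y')\cd e_1$ and $u^\eps_t(y'):=u\e(t,y')$, Fubini combined with the pointwise inequalities $|\nb u|^p\ge|\nb_{y'}u|^p$ and $|\s|^2\ge(\s\cd e_1)^2=|\vartheta^\eps_t|^2$, and with $d=n-1$ matching the exponents of $\eps$ in $\F_{\eps,a}$ to those of the reduced $E_{\eps,a}$, yields
$$\F_{\eps,a}(\s\e,u\e;B_{r_j}(0))\,\ge\,\int_{-r_j/\sqrt{2}}^{r_j/\sqrt{2}}E_{\eps,a}(\vartheta^\eps_t,u^\eps_t;D_{r_j,t})\dt.$$
On the range $|t|\le r_j/\sqrt{2}$ the disk $D_{r_j,t}$ has radius at least $r_j/\sqrt{2}$.

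After a diagonal extraction one may assume $\vartheta^\eps_t\rwstar\vartheta_t$ (as measures) for a.e.\ $t$ in the interval. For each such $t$ with $|\vartheta_t|(\R^{n-1})>0$, Proposition~\ref{prop: structurefm} (applied after localising $\vartheta^\eps_t$ in a slightly smaller concentric disk so as to meet its support condition) gives
$$\liminf_{\eps\dw0}E_{\eps,a}(\vartheta^\eps_t,u^\eps_t;D_{r_j,t})\,\ge\,f^d_a(|\vartheta_t|(\R^{n-1}))\,\ge\,\kappa.$$
By Fatou's lemma, $\liminf_{\eps\dw0}\F_{\eps,a}(\s\e,u\e;B_{r_j}(0))\ge\kappa\,|\{t\in(-r_j/\sqrt{2},r_j/\sqrt{2}):|\vartheta_t|(\R^{n-1})>0\}|$, and the statement reduces to showing that this set has full Lebesgue measure $\sqrt{2}\,r_j$.

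This last step is the main obstacle and is where the vanishing divergence and the approximate-tangent information at $x$ are essential. Since $\nb\cd\s\e\equiv 0$ on $B_{r_j}(0)$, the divergence theorem applied to slabs $(t_1,t_2)\times\R^{n-1}\cap B_{r_j}(0)$ implies that the flux $\Phi^\eps(t):=\int_{D_{r_j,t}}\s\e\cd e_1\dy'$ is constant in $t$ modulo a lateral contribution through $\de B_{r_j}(0)$, which is controlled by the uniform mass bound from Lemma~\ref{lemma: mass bound}. Passing to the limit, the weak-$*$ convergence $\s\e\rwstar\s$ together with $\s\approx|\s|\,e_1$ near $0$ (because $\nu(x)=e_1$) and the non-decay estimate $|\s|(B_{r_j/4}(0))>\theta\,|\s|(B_{r_j}(0))>0$ force the average of $\Phi^\eps$ on $(-r_j/\sqrt{2},r_j/\sqrt{2})$ to converge to a strictly positive number. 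The near-constancy of $\Phi^\eps$ then yields $|\Phi^\eps(t)|>0$ for a.e.\ $t$ in the limit, and since $\|\vartheta^\eps_t\|_{L^1}\ge|\Phi^\eps(t)|$ this passes to $|\vartheta_t|(\R^{n-1})>0$ a.e., closing the argument.
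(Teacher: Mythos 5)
Your overall architecture is the same as the paper's (slice transverse to $\nu(x)=e_1$, reduce each slice to the dimension-$(n-1)$ problem of Subsection~\ref{subSection: reducedProblem}, apply $f^d_a\geq\kappa$ and Fatou), but the step you yourself identify as ``the main obstacle'' is not correctly closed, and the gap is genuine. You claim that the lateral contribution to the flux $\Phi^\eps(t)$ through $\de B_{r_j}(0)$ ``is controlled by the uniform mass bound from Lemma~\ref{lemma: mass bound}.'' It is not: that contribution is a surface integral of the trace of $\s\e$ on the sphere, and an $L^1(\Om)$ bound on $\s\e$ gives no control on it ($\s\e$ may concentrate arbitrarily near $\de B_{r_j}$). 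Even granting some distributional bound of order $O(1)$, that is useless here: to conclude $\Phi^\eps(t)>0$ for a.e.\ $t$ you need the \emph{variation} of the flux to be small compared with its \emph{mean}, which is of order $\t\,|\s|(B_{r_j})/r_j$ and has no a priori relation to the constant in Lemma~\ref{lemma: mass bound}. The paper's fix is precisely to replace the flux through the full disk by the cutoff-weighted flux $g\e(s)=e_1\cdot\int\s\e(s,x')\chi_{r_*}(x')\dx'$ with $\chi_{r_*}$ compactly supported in the transverse ball: then $\nb\cdot\s\e=0$ turns the lateral term into the volume integral $\int\s\e\cdot\nb'\chi_{r_*}$, whose limit is bounded by the \emph{transverse} mass $|\Pi\s|(C_{r_*})$, and this is small relative to $\ov m$ only because of the approximate-tangent condition $\s(B_{r_j})\cdot e_1\geq(1-\xi)|\s|(B_{r_j})$ (estimate~\eqref{pr. local length 3}). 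Your appeal to ``$\s\approx|\s|\,e_1$ near $0$'' gestures at this but never converts it into the quantitative bound $\liminf_\eps g\e(t)\geq(1-c\sqrt\xi/(\t-\xi))\ov m>0$ that the argument actually requires.

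A second, independent problem: ``after a diagonal extraction one may assume $\vartheta^\eps_t\rwstar\vartheta_t$ for a.e.\ $t$'' is not a legitimate operation. Diagonal extraction handles countably many $t$; for an $L^1$-bounded family there is in general no subsequence along which almost every slice converges weakly-$*$ (mass can oscillate and migrate between slices). Moreover, applying Proposition~\ref{prop: structurefm} slice by slice requires a \emph{fixed} positive mass $\tilde m$, uniform in $\eps$ (i.e.\ $\liminf_\eps\|\vartheta^\eps_t\|_1\geq\tilde m>0$), together with the support condition $\supp\vartheta^\eps_t\subset B_{\tilde r}$ with $\tilde r<r$ strictly; truncating $\vartheta^\eps_t$ to a smaller concentric disk, as you propose, may destroy the mass lower bound since $\Phi^\eps(t)$ only controls the mass in the full disk. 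The paper sidesteps both issues at once: the test density is $\vartheta^t_\eps=|\chi_{r_*}\s\e(t,\cdot)|$, which is supported in $B'_{3r_*/4}$ by construction and satisfies $\liminf_\eps\int\vartheta^t_\eps\geq\liminf_\eps g\e(t)\geq\tilde m$ thanks to the $L^1(-\hat r,\hat r)$ convergence of $g\e-\ov g\e$ and the continuity of $g\e$. Your proof would need to be rebuilt around such a cutoff-weighted scalar flux to be correct.
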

\begin{proof}
Let $x\in \Sigma\setminus \mathscr{S}$. Without loss of generality, we assume $x=0$ and $\nu(x)=e_1$. Let $\xi>0$ be a small parameter to be fixed later. From the definition of $\Sigma$, there exists a sequence $(r_j)=(r_j(x))\subset (0,d(x,\mathscr{S}))$ such that for every $j\geq0$,
\begin{equation}\label{pr. local length 1}
\sigma(B_{r_j})\cdot e_1\, \geq\, (1-\xi) |\sigma|(B_{r_j})\qquad\mbox{and}\qquad |\sigma|(B_{r_j/4})\, \geq\, \theta |\sigma|(B_{r_j}). 
\end{equation}
Let us fix $j\geq0$ and set, to simplify the notation, $r=r_j$ and $r_*=r/\sqrt{2}$. Recall the notation $x=(x_1,x')\in\R\times\R^{n-1}$ and define the cylinder
\begin{equation*}
C_{r_*}:=\lt\{x: |x_1|\leq r_*\quad\mbox{ and }\quad |x'|\leq r_*\rt\}
\end{equation*}
 so that $C_{r_*}\subset B_{r}$  and $B_{r/4}\subset C_{{r_*}/2}$, as shown in figure~\ref{figura1}. Let $\chi\in C^\oo_c (\R^{n-1},[0,1])$ be a radial cut-off function such that $\chi(x')=1$ if $|x'|\leq \frac{1}{2}$ and $\chi(x')=0$ for $|x'|\geq\frac{3}{4}$. Then, we note $\chi_{r_*}(x')=\chi(x'/{r_*})$ and for $s\in [-r,r]$, we set
\begin{equation*}
\forall s\in [-r,r],\qquad g\e(s)\,:=\, e_1\cdot \int_{B_{r_*}'} \s\e(s,x')\,\chi_{r_*}(x')\, \dx'.
\end{equation*}
  \begin{figure}[!ht]
  \centering
   \includegraphics[scale=0.7]{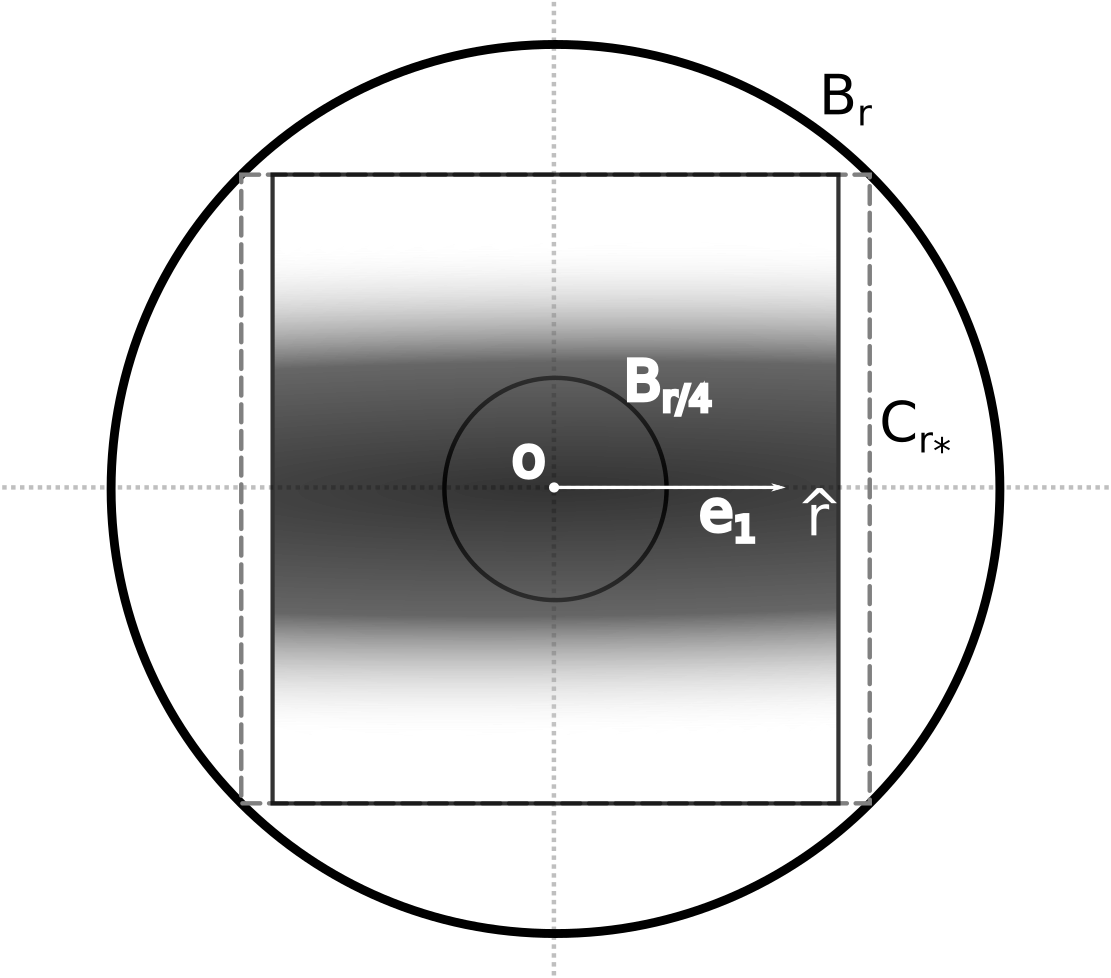}
   \caption{Illustration of the sections of $B_r$, $B_{r/4}$ and $C_{r_*}$. In grayscale we represent the level sets of the function $\chi_{r_*}(x')\mathbf{1}_{[-\hat r,\hat r]}$.}\label{figura1}
  \end{figure}
Since $\s\e$ is divergence free, $e_1\cdot\s\e(\cdot,s)$ has a meaning on the hyperplane $\{x_1=s\}$ in the sense of trace, moreover, $g\e$ is continuous. 
Now, let us fix $\hat r\in [(1-\xi){r_*},{r_*}]$ such that $\mu(\{-\hat r,\hat r\}\times B_r')=0$ (which holds true for a.e. $\hat r \in [(1-\xi){r_*},{r_*}]$) and let us define the mean value,
\begin{equation*}
\bar g\e\,:=\, \dfrac1{2\hat r}\int_{-\hat r}^{\hat r} g\e(s)\,\di s.
\end{equation*}
From $\s\e\,\stackrel*\rightharpoonup\,\s$, $|\s\e|\,\stackrel*\rightharpoonup\,\mu$, we have
\begin{equation}\label{pr. local length 2}
\lim_{\eps\dw0} \bar g\e\, =\, \lt(\dfrac1{2\hat r}\int_{(-\hat r,\hat r)\times B'_{r_*}}\chi_{r_*}(x')\,\di\s(s,x')\rt)\cdot e_1  \, =:\, \ov m.                                                                                                                                               \end{equation} 
From~\eqref{pr. local length 1}, we see that $\ov m>0$ for $\xi$ small enough. Indeed, we have 
\begin{eqnarray*}
(1-\xi)|\s|(B_{r})\, \leq\,+ 2\hat r \ov m+  \s (B_{r})\cdot e_1& =& \int_{B_{r}}\lt(1-\chi_{r_*}(x')\mathbf{1}_{[-\hat r,\hat r]}\rt)\, \ds(s,x')\cdot e_1\\
& \leq  & 2\hat r \ov m + \int_{B_{r}}\lt(1-\chi_{r_*}(x')\mathbf{1}_{[-\hat r,\hat r]}\rt)\, \di|\s|(s,x')\\
& \leq  & 2\hat r \ov m+|\s|(B_{r})- \int_{B_{r}}\chi_{r_*}(x')\mathbf{1}_{[-\hat r,\hat r]}\, \di|\s|(s,x').
\end{eqnarray*}
Since by construction $\chi_{r_*}(x')\mathbf{1}_{[-\hat r,\hat r]}\geq\mathbf{1}_{B_{r/4}}$, using the second inequality of~\eqref{pr. local length 1}, we have
\begin{equation*}
\ov m\, \geq\, \dfrac1{2\hat r}(\theta-\xi)|\s|(B_{r})\,>\,0,
\end{equation*} 
for $\xi$ small enough. Similarly, denoting $\Pi \, :\, \R^{n}\to\R^{n-1},\ (t,x')\mapsto x'$ the orthogonal projection onto the last $(n-1)$ coordinates, we deduce again from~\eqref{pr. local length 1} that 
\begin{equation}\label{pr. local length 3}
|\Pi \sigma|(C_{r_*}) \leq \frac{\sqrt{\xi} \ov m}{\t -\xi} 2 \hat r.
\end{equation}
Now, for $\eps$ small enough, we have $\nb\cdot\sigma\e=0$ in $C_{r_*}$. Using this, we have for almost every $s,t \in [-\hat r,\hat r]$,  with $s<t$,
\begin{equation*}
g\e(t)-g\e(s)\, =\, \int_s^t\lt[\int_{B'_{r_*}} \s\e(x',h)\cdot \nb'\chi_{r_*}(x')\, \dx'\rt]\, \di h.
\end{equation*}
Integrating in $s$ over $(-\hat r,\hat r)$, we get for almost every $t\in [-r,r]$, 
\begin{equation*}
g\e(t)-\bar g\e\, =\, \dfrac1{2\hat r}\int_{ (-\hat r,
\hat r)\times B'_{r_*}} \phi_t(x',h)\cdot \sigma\e(x',h)\, \dx'\, \di h
\end{equation*}
with
\begin{equation*}
\phi_t(h,x')\, =\, 
     \begin{dcases}
   	        \lt(h+\hat r\rt)\nb'\chi_{r_*}(x') & \mbox{if } h<t,\\
	         \lt(h-\hat r\rt)\nb'\chi_{r_*}(x') & \mbox{if } h>t.
     \end{dcases}
     \end{equation*}
We deduce the following convergence 
\begin{equation}
g\e(t)-\ov m\stackrel{\eps\dw0}\longto\,   \dfrac1{2\hat r}\int_{ (-\hat r,
\hat r)\times B'_{r_*}} \phi_t(h,x')\cdot \,\di\s(h,x').
\end{equation}
in the $L^1(-\hat r,\hat r)$ topology. Using~\eqref{pr. local length 3}, we see that the above right hand side is bounded by $c\frac{\sqrt{\xi}}{\t-\xi} \ov m$. Taking into account~\eqref{pr. local length 3} and the continuity of $g\e$, we conclude that  
\begin{equation*}
\liminf_{\eps\dw0} g\e(t)\geq \lt(1-c\frac{\sqrt{\xi}}{\t-\xi}\rt) \ov m \quad \mbox{ for } \quad t\in [-\hat r,\hat r].
\end{equation*}
Next, by decomposing the integral we have 
\begin{equation}
\begin{aligned}\label{pr. local length 5}
\F_{\eps,a}(\s\e,u\e;B_{r})\, &\geq\, \int_{-\hat r}^{\hat r}\int_{B'_{r_*}}\lt[\eps^{p-n+1}|\nb u\e|^p+\frac{(1-u\e)^2}{\eps^{n-1}}+\frac{u\e|\s\e|^2}{\eps}\rt]\dx'\, \dt\\
&\geq\,\int_{-\hat r}^{\hat r}\int_{B'_{r_*}}\lt[\eps^{p-n+1}|\nb u\e|^p+\frac{(1-u\e)^2}{\eps^{n-1}}+\frac{u\e|\chi_{r_*}(x')\s\e|^2}{\eps}\rt]\dx'\, \dt.
\end{aligned}
\end{equation}
Let us set  
\begin{equation*}
\vartheta^t\e(x'):=|\chi_{r_*}(x')\s\e(t,x')|.
 \end{equation*}
By construction $\vartheta^t\e$ has the properties:
\begin{itemize}
\item $\vartheta^t\e\in L^1(B'_{r_*})$,
\item $\liminf_{\eps\dw0}\int_{B'_{r_*}}\vartheta^t\e(x')\, dx'\,\geq \, \liminf_{\eps\dw0} g\e(t)\, \geq\, \lt(1-c\frac{\sqrt{\xi}}{\t-\xi} \rt)\ov m=\tilde m>0$,
\item $\supp(\vartheta^t\e)\subset B'_{\tilde r}$ with $\tilde r :=\frac{3}{4}{r_*}<{r_*}$.
\end{itemize}
By definition of the minimization problem introduced in Subsection~\ref{subSection: reducedProblem}, we have
\begin{equation}\label{pr. local length 6}
\F_{\eps,a}(\s\e,u\e;B_{r})\, \geq\,\int_{-\hat r}^{\hat r}\lt[\inf_{(\vartheta,u)\in Y_{\eps,a}(\tilde m,r,\tilde r)} E_{\eps,a}(\vartheta,u;B_r)\rt]\, \dt=\,\int_{-\hat r}^{\hat r}f^{\tilde r}\e\lt(\tilde m\rt)\, \dt.
\end{equation}
Taking the infimum limit, by Fatou's lemma and equation~\eqref{eq: reducedconstant} of Proposition~\ref{prop: structurefm} we get
\begin{equation*}
\liminf_{\eps\dw0} \F_{\eps,a}(\s\e,u\e;B_{r})\geq \int_{-\hat r}^{\hat r}\liminf_{\eps\dw 0}f^{\tilde r}\e\lt(\tilde m\rt)\, \dt\,\geq\, 2\,\hat r\, \kappa.
\end{equation*}
The latter holds for almost every  $\hat r \in [(1-\xi)r_*,r_*]$ and eventually, since the $r_*=r/\sqrt{2}$, we conclude
\begin{equation*}
\liminf_{\eps\dw0} \F_{\eps,a}(\s\e,u\e;B_{r})\geq \, \sqrt{2}\,  \kappa\,r.
\end{equation*}
 \end{proof}
 The proof of Proposition~\ref{proposition: length bound} is then obtained via the Besicovitch covering theorem~\cite{MR3409135}.
 

\subsection{$\Gamma$-liminf inequality}\label{subSection: 1liminf}
In this subsection we prove the $\Gamma-\liminf$ inequality stated in Theorem~\ref{teo: 1Gammaliminf}.
\begin{proof}[Proof of Theorem~\ref{teo: 1Gammaliminf}.]
 With no loss of generality we assume that $\liminf_{\eps\dw0} \F_{\eps,a}(\s\e,u\e)<+\oo$ otherwise the inequality is trivial. For a Borel set $A\subset \Omega$, we define
 \[ 
 H(A)\,:=\,\liminf_{\eps\dw0}\F_{\eps,a}(\s\e,u\e;A),
 \] 
so that $H$ is a subadditive set function. By assumption, the limit measure $\sigma$ is $1$-rectifiable; we write $\s= m\,\nu\, \H^1\restr \Sigma$. Furthermore we can assume $\sigma$ to be compactly supported in $\Omega$. Consider a convex open set $\Omega_0$ such that $\supp(\nabla\cdot\s)=\mathscr{S}\subset\subset\Omega_0\subset\subset \Omega$ and let $h:=[0,1]\times\R^n\to \R^n$ be a smooth homotopy of the indentity map on $\R^n$ onto a contraction of  $\overline\Omega$ into $\overline \Omega_0$ such that $h(t,\cdot)$ restricted to $\Omega_0$ is the identity map, for any $t\in[0,1] $. Let $\s_t=h(t,\cdot)\sharp\s$, indeed $\liminf_{t\dw0}\F(\s_t,1)\geq \F(\s,1)$ as  $\s_t\rwstar\s$. Further $\nabla\cdot \sigma_t=\nabla \cdot \s$ since $h(t,\cdot)$ is the identity on $\mathscr{S}$.
Now we claim that 
\begin{equation}\label{eq: liminflocale}
\liminf_{r\dw0}\dfrac{H\lt(\ov{B(x,r)}\rt)}{2r} \, \geq\,f_a(m(x))\qquad\mbox{for $\H^1$-almost every $x\in\Sigma$.}
 \end{equation}
Let us fix $\lambda\geq 1$ and let us note $f_{a,\lambda}(t):=\min(f_a(t),\lambda)$. We then introduce the Radon measure 
\[
H_\lambda'(A):= \int_{\Sigma\cap A} f_{a,\lambda}(m)\, \dH^1.
\]
Now, let $\delta\in(0,1)$. Assuming that~\eqref{eq: liminflocale} holds true, there exists $\Sigma'\subset\Sigma$ with $\H^1(\Sigma\backslash\Sigma_0)=0$ such that for every $x\in\Sigma_0$, there exists $r_0(x)>0$ with 
\[
(1+\delta) H\lt(\ov{B(x,r)}\rt)\,\geq\, 2r f_{a,\lambda}(m(x)) \qquad\mbox{ for every $r\in(0,r_0(x))$}.
\]
By the Besicovitch differentiation Theorem, there exists $\Sigma_1\subset\Sigma$ with $\H^1(\Sigma\backslash\Sigma_1)=0$ such that for every $x\in\Sigma_1$, there exists $r_1(x)>0$ with 
\[
(1+\delta) 2r f_a(m(x))\,\geq\,H_\lambda'\lt(\ov{B(x,r)}\rt)\qquad\mbox{ for every $r\in(0,r_1(x))$}.
\]
We consider the familly $\mathcal{B}$ of closed balls $\ov{B(x,r)}$ with $x\in \Sigma_0\cap \Sigma_1$ and $0<r<\min(r_0(x),r_1(x))$ and we apply the Vitali-Besicovitch covering theorem~\cite[Theorem 2.19.]{Am_Fu_Pal} to the family $\mathcal{B}$ and to the Radon measure $H'_\lambda$. We obtain a disjoint family of closed balls $\mathcal{B}'\subset\mathcal{B}$ such that  
\[
H'_\lambda(\Omega)=H'_\lambda(\Sigma)=\sum_{\ov{B(x,r)}\in \mathcal{B}'}H'_\lambda\lt(\ov{B(x,r)}\rt) \,\leq\,  (1+\delta)^2  \sum_{\ov{B(x,r)}\in \mathcal{B}'} H\lt(\ov{B(x,r)}\rt)\,\leq \,  (1+\delta)^2  H(\Omega).
\]
Sending $\lambda$ to infinity and then $\delta$ to 0, we get the lower bound $H(\Omega)\geq \int_\Sigma f_a(m)\, d\mathcal{H}^1$ which proves the theorem.\medskip

Let us now establish the claim~\eqref{eq: liminflocale}. Since $\s$ is a rectifiable measure, we have for $\mathcal{H}^1$-almost every $x\in\Sigma$,
\begin{equation} 
\label{eq: liminf1}
\dfrac1{2r}\int \vhi(x+ry)\, \di|\sigma|(y)\ \stackrel{r\dw0}\longrightarrow\ m(x)\,\int_\R\vhi(t\nu(x))\, \di t\qquad\mbox{for every $\vhi\in C_c(\R^n)$},
\end{equation}
and
\begin{equation}
\label{eq: liminf2}
\dfrac1{2r}\int_{B(x,r)\cap\Sigma} |\nu(y)-\nu(x)|\, \di |\sigma|(y)\ \stackrel{r\dw0}\longrightarrow\ 0.
\end{equation}
Let $x\in \Sigma\setminus \mathcal{S}$ be such a point. Without loss of generality, we assume $x=0$, $\nu(0)=e_1$ and $\ov{m}:=m(0)>0$. Let $\delta\in(0,1)$. Our goal is to establish a precise lower bound for $\F_{\eps,a}(\s\e,u\e;C)$ where $C$ is a cylinder of the form
\[
C_{r}^\delta\,:=\, \lt\{x\in \R^n\, :\, |x_1|<\delta r,\, |x'|<r\rt\}.
\]
For this we proceed as in the proof of Lemma~\ref{lemma: local length Sigma}, here, the rectifiability of $\sigma$ simplifies the argument. Let $\chi^\delta\in C^\oo_c(\R^{n-1},[0,1])$ be a radial cut-off function with $\chi^\delta(x')=1$ if $|x'|\leq\delta/2$, $\chi^\delta(x')=0$ if $|x'|\geq \delta$. For $\eps>0$ and $r\in (0,d(0,\pt\Om))$, we define for $s\in(-r,r)$,
\[
 g^{\delta,r}\e(s)\,:=\, e_1\cdot \int_{\R^{n-1}} \s\e(s,x')\,\chi^\delta(x'/r)\, \dx'.
\]
We also introduce the mean value
\[
\ov{g^{\delta,r}\e}\, :=\, \dfrac1{2r}\int_{-r}^r g^{\delta,r}\e(s)\, \di s.
\]
From~\eqref{eq: liminf1}, we have for $r>0$ small enough, 
\[
\ov{g^{\delta,r}_0}\, :=\,  \dfrac1{2r}\int_{-r}^r  e_1\cdot \int_{\R^{n-1}} \s\e(s,x')\,\chi^\delta(x'/r)\, \dx\, \di s \, \geq\, (1-\delta)\ov m.
\]
For such $r>0$, we deduce from $\s\e\stackrel{*}\rightharpoonup\s$ that for $\eps>0$ small enough
\begin{equation}
\label{eq: liminf3}
\ov{g^{\delta,r}\e}\, :=\, \dfrac1{2r}\int_{-r}^r g^{\delta,r}\e(s)\, \di s\, \geq\, (1-2\delta)\ov m.
\end{equation}
We study the variation of $g^{\delta,r}\e(s)$. Using $\nb\cdot\s\e=0$ in $C_r^\delta$, we compute as in the proof of Lemma~\ref{lemma: local length Sigma},
\begin{equation*}
g^{\delta,r}\e(t)-\ov {g^{\delta,r}\e}\, =\, \dfrac1{2r}\int_{ (-r,
r)\times B_{\delta r}} \phi_t(x',h)\cdot \sigma\e(x',h)\, \dx'\, \di h
\end{equation*}
with
\begin{equation*}
\phi_t(h,x')\, =\, 
     \begin{dcases}
   	        \lt(h+\hat r\rt)\nb'\chi^\delta(x'/r) & \mbox{if } h<t,\\
	         \lt(h-\hat r\rt)\nb'\chi^\delta(x'/r) & \mbox{if } h>t.
     \end{dcases}
     \end{equation*}
Using again the convergence  $\s\e\stackrel{*}\rightharpoonup\s$, we deduce 
\[
g^{\delta,r}\e(t)-\ov {g^{\delta,r}\e}\ \stackrel{\eps\dw0}\longto\   \dfrac1{2r}\int_{ (-r,
r)\times B_{\delta r}} \phi_t(x',h)\cdot \di\sigma(x',h),
\]
in $L^1(-r,r)$. Now, since $e_1\cdot \nb'\chi^\delta\equiv 0$, we deduce from~\eqref{eq: liminf2} that the right hand side goes to 0 as $r\dw0$. Hence, for $r>0$ small enough,
\[
\lt| \dfrac1{2r}\int_{ (-r,r)\times B_{\delta r}} \phi_t(x',h)\cdot \sigma(x',h)\, \dx'\, \di h\rt|\, \leq\, \delta \ov m.
\]
Using~\eqref{eq: liminf3}, we conclude that for $r>0$ small enough and then for $\eps>0$ small enough, we have 
\[
g^{\delta,r}\e(t)\, \geq \, (1-3\delta)\ov m,\qquad \mbox{ for a.e. }t\in (-r,r).
\]
By definition of the codimension-0 problem, we conclude that 
\[
\F_{\eps,a}(\s\e,u\e;C_{r}^\delta)\ \geq\, 2r f^{n-1}_{\eps,a}\lt( (1-3\delta)\ov m\rt).
\]
Sending $\eps \dw 0$, we obtain
\[
H(C_{r}^\delta)\ \geq\, 2r f^{n-1}\lt( (1-3\delta)\ov m\rt).
\]
We notice that $H(B_{\sqrt{1+\delta^2}\,r})\geq H(C_{r}^\delta)$. Dividing by $2\sqrt{1+\delta^2}\,r$ and taking the liminf as $r\dw0$, we get 
\[
\liminf_{r\dw0}\dfrac{H(B_{\sqrt{1+\delta^2}\,r})}{2\sqrt{1+\delta^2}\,r} \, \geq \, \dfrac{f_a\lt( (1-3\delta)\ov m\rt)}{\sqrt{1+\delta^2}}.
\]
Sending $\delta$ to $0$, we get~\eqref{eq: liminflocale} by lower semi-continuity of~$f_a$.
\end{proof}

\subsection{$\Gamma$-limsup inequality}\label{subSection: 1limsup}
\begin{proof}[Proof of Theorem~\ref{teo: 1Gammalimsup}]~\\
Let us suppose $\F(\s,u;\ov\Om)<+\oo$, so that in particular $u\equiv 1$.  From Xia~\cite{Xia1}, we can assume $\s$ to be supported on a finite union of compact segments and to have constant multiplicity on each of them, namely polyhedral vector measures are dense in energy. We first construct a recovery sequence for a measure $\s$ concentrated on a segment with constant multiplicity. Then we show how to deal with the case of a polyhedral vector measures.\\

 \textit{Step 1. ($\s$ concentrated on a segment.) } Assume that $\s$ is supported on the segment $I=[0,L]\times\{ 0\}$ and writes as $m\cdot e_1\H^1\restr_{I}$. Consider $m$ constant so that $\nb\cdot \s=m(\d_{(0, 0)}-\d_{(L, 0) })$ and 
\begin{equation*}
\F(\s,1;\Om)=f_a(m)\,\H^1(I)=L\,f_a(m).
\end{equation*}
For $\d>0$ fixed, we consider the profiles
\begin{equation*}
\ov u\e(t):=
\begin{dcases}
\eta,&\mbox{ for } 0\leq t\leq  r_*\eps,\\
v_\d\lt(\frac{t}{\eps}\rt),&\mbox{ for } r_*\eps\leq  t\leq r,\\
1&\mbox{ for } r\leq  t,
\end{dcases}
\qquad\mbox{and}\qquad\vartheta\e=\frac{m\;\chi_{B'_{r_*\eps}}(x')}{\;\om_{n-1}\;   (\eps r_*)^{n-1}}
\end{equation*}
with $r_*$ and $ v_\d$, defined in Proposition~\ref{prop: flimsup} with $d=n-1$. Assume $r_*\geq1$ and let $d(x,I)$ be the distance function from the segment $I$ and introduce the sets
\begin{equation*}
I_{ r_*\eps}:=\lt\{x\in \Om\;:\;d(x,I)\leq r_* \eps\rt\},\qquad \mbox{ and }\qquad I_{ r}:=\lt\{x\in \Om\;:\;d(x,I)\leq r\rt\}.
\end{equation*}
Set $u\e(x)=\ov u\e(d(x,I))$ and $\ov\s^1\e=m\H^1\restr I*\rho\e$, where $\rho\e$ is the mollifier of equation~\eqref{point sources eps}. We first construct the vector measures 
\begin{equation*}
   \s\e^1=\ov\s^1\e \,e_1\qquad \mbox{and}\qquad \s\e^2(x_1,x')=\vartheta\e(|x'|)\; e_1.
  \end{equation*}
Alternatively, $\s^2\e=\s*\tilde\rho\e$  for the choice $\tilde\rho\e(x_1,x')=\chi_{B'_{r_*\eps}}(x')/\;\om_{n-1} (\eps r_*)^{n-1}$. Let us highlight some properties of $\s\e^1$ and $\s\e^2$. Both vector measures are radial in $x'$, with an abuse of notation we denote $\ov \s^1\e(x_1,s)=\ov \s^1\e(x_1,|x'|)$. Since, both  $\s\e^1$ and $\s\e^2$ are obtained trough convolution it holds  $\supp(\s\e^1)\cup\supp(\s\e^2)\subset I_{r_*\eps}$ and they are oriented by the vector $e_1$ therefore $|\s^1\e|=\ov\s^1\e$ and $|\s^2\e|=\vartheta\e$. Furthermore for any $x_1$, it holds
\begin{equation}\label{eq: corresponding normal flux}
\int_{\{x_1\}\times B'_{r_*\eps}}  \lt[\ov\s^1\e(x_1,x')-\vartheta\e(x')\rt]\dx'=0
\end{equation}
We construct $\s\e$ by interpolating between $\s^1\e$ and $\s^2\e$. To this aim consider a cutoff function $\zeta\e:\R\rw\R_+$ satisfying

 \begin{minipage}{.35\textwidth}
   \begin{gather*}
\qquad  \zeta\e(t)=1\qquad \mbox{ for } t\leq  r_* \eps\mbox{ or } t\geq L- r_* \eps,\\
\qquad  \zeta\e(t)=0\qquad \mbox{ for } 2\; r_* \eps\leq t\leq L-2\; r_* \eps,\\
\end{gather*} 
 \end{minipage}
  \begin{minipage}{.60\textwidth}
 \begin{equation*}
 \mbox{ and }\qquad \qquad\lt|\zeta\e'\rt|\leq \frac{1}{ r_* \eps}.
 \end{equation*}
 \end{minipage} ~\\
and set
 \begin{equation*}
 \begin{dcases}
   \s\e^3\cdot e_1=0,\\
 \s\e^3\cdot e_i(x_1,x')=-\zeta\e'(x_1)\;\frac{x_i}{|x'|^{n-1}}\;\int_{0}^{|x'|}s^{n-2}\lt[\ov\s\e^1(x_1,s)-\vartheta\e(s)\rt]\di s, &\mbox{ for } i=2,\dots, n.
 \end{dcases}
\end{equation*}
The integral corresponds to the difference of the fluxes of $\s^1\e$ and $\s^2\e$ through the $(n-1)$-dimensional disk $\{x_1\}\times B'$. For $\s^3\e$ we have the following \begin{multline}\label{divesigma3}
  \nb\cdot \s^3\e= -\zeta\e'(x_1)\sum_{i=2}^n\lt[\lt(\frac{1}{|x'|^{n-1}}-\frac{(n-1)x_i^2}{|x'|^{n+1}}\rt)\int_{0}^{|x'|}s^{n-2}\lt[\ov\s\e^1(x_1,s)-\vartheta\e(s)\rt]\di s\rt.\\+\lt.\frac{x_i^2}{|x'|^2}\lt[\ov\s\e^1(x_1,|x'|)-\vartheta\e(|x'|)\rt]\rt]=-\zeta\e'(x_1)\lt[\ov\s\e^1(x_1,|x'|)-\vartheta\e(|x'|)\rt]
\end{multline}
Let
\[
\s\e=\zeta\e\,\s^1\e+(1-\zeta\e)\,\s^2\e+\s^3\e.
\]
In force of equation~\eqref{divesigma3} and from construction of $\s^1\e$, $\s^2\e$ and $\zeta\e$ we have
\begin{align*}
  \nb\cdot\s\e&= \nb\cdot(\zeta\e\s^1\e)+\nb\cdot(1-\zeta\e)\s^2\e+\nb\cdot\s^3\e\\
  &=\zeta\e \nb\cdot\s^1\e+\zeta'\e( \ov\s\e^1-\vartheta\e)+\nb\cdot\s^3\e\\
  &=\zeta\e \nb\cdot\s^1\e=\nb\cdot(\s*\rho\e)
  \end{align*}
In addition for any $(x_1,x')$ such that $|x'|\geq r_*\eps$ from~\eqref{eq: corresponding normal flux} we derive 
\[
\s\e^3\cdot e_i(x_1,x')=-\zeta\e'(x_1)\;\frac{x_i}{|x'|^{n-2}}\;\int_{0}^{|x'|}s^{n-1}\lt[\ov\s\e^1(x_1,s)-\vartheta\e(s)\rt]\di s=0
\]
which justifies $\supp(\s\e)\subset I_{r_*\eps}$. Let us now prove 
\[
 \limsup_{\eps\dw0}\F_{\eps,a}(\s\e,u\e;\Om)\leq L\,f_a(m)+C\d.
\] 
\begin{figure}\centering
 \includegraphics[scale=.7]{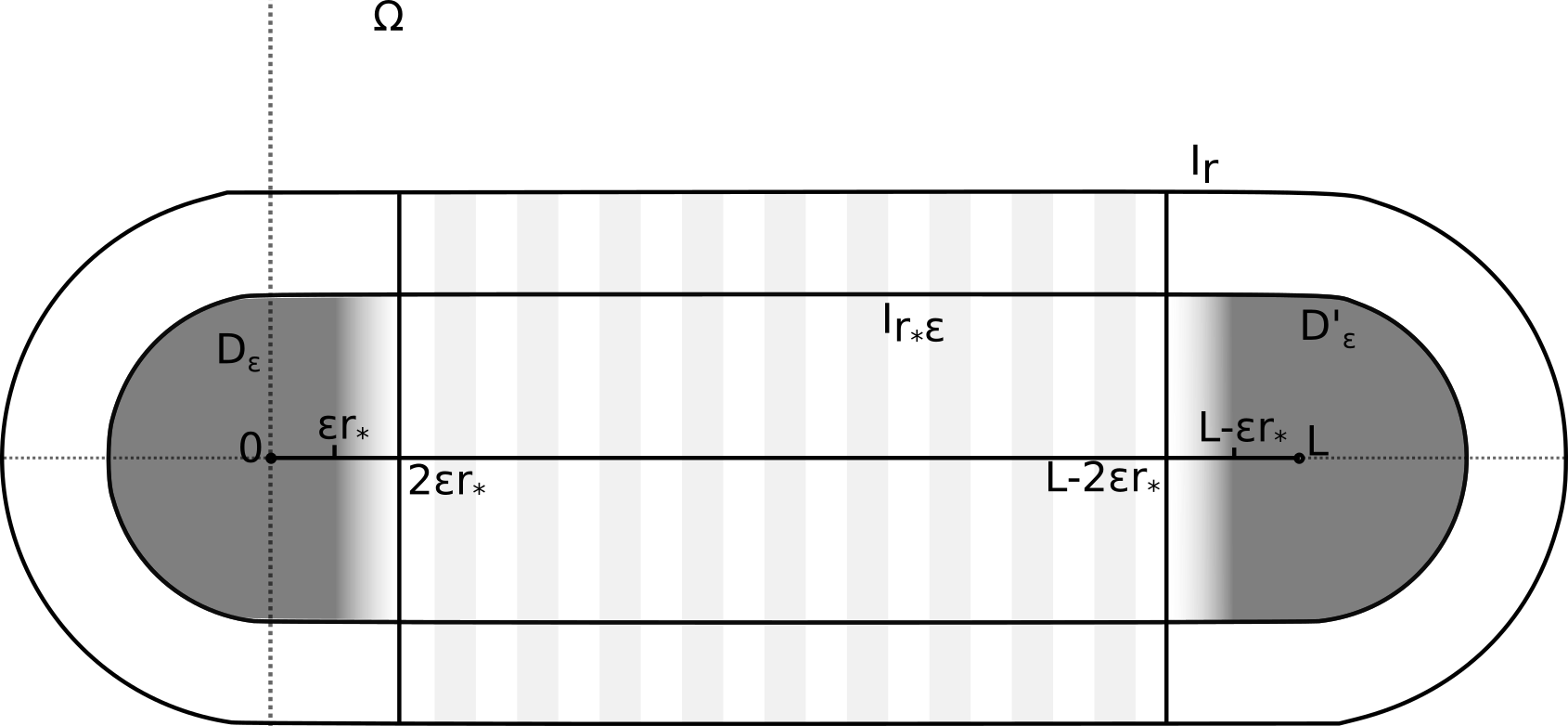}
 \caption{Illustration of the interval $I$ and both its $r$ and $(r_*\eps)$-enlargement for $r_*\geq1$.  In grayscale we plot the levels of the function $\zeta\e$, whilst the striped region corresponds to the cylinder $C_{r,\eps}$.}\label{figura4}
\end{figure}
We split $\Om$ as the union of  $\Om\sm I_r$, $C_{r,\eps}:=I_r\cap[2\;\eps,L-2\;\eps]\times \R^{n-1}$ and $D\e$ and $D'\e$, as show in figure~\ref{figura4}, where $D\e=\{x_1\leq 2\, r_*\,\eps\}\cap I_{ r_*\eps}$ and $D'\e=\{x_1\geq L- 2\, r_*\,\eps\}\cap I_{ r_*\eps}$. On $\Om\sm I_r$ we notice that $\s\e=0$ and $u\e=1$ therefore 
\[
\F_{\eps,a}(\s\e,u\e;\Om\sm I_r)=0.
\]
Observe that $|D\e|=|D'\e|=C\eps^n$, then we have the upper bound
\begin{equation*}\label{eq: estimationballs}
 \int_{D\e} |\s\e|^2\dx\leq 2\;\frac{m^2\; r_*^2}{\eps^{n-2}}\lt(\int_{B_1} \rho^2\dx+C\rt).
\end{equation*}
Taking into consideration this estimate we obtain 
\begin{equation}\label{ineq:Deps}
\F_{\eps,a}(\s\e,u\e;D\e)= \F_{\eps,a}(\s\e,u\e;D'\e)\leq \frac{(1-\eta)^2}{\eps^{n-1}}\mathscr{L}^n(D\e)+2\,m^2\, r_*^2 \;\frac{\eta}{\eps^{n-2}}.
\end{equation}
Finally on $C_{r,\eps}$ both $\s\e$ and $u\e$ are independent of $x_1$ and are radial in $x'$ then by Fubini's theorem and Proposition~\ref{prop: flimsup} we get
 \begin{equation*}
  \F_{\eps,a}(\s\e,u\e;C_{r,\eps})= \int_{2\,\eps r_*}^{L-2\,\eps r_*}\int_{B'_{r}}E_{\eps,a}(\vartheta\e,u\e)\leq \,L\,(f_a(m)+C\,\d) .
 \end{equation*}
Adding all together gives the desired estimate. It remains to discuss the case $r_*< 1$. From the point of view of the construction of $\s\e$ we need to replace the functions $\zeta\e$ with

\hspace{1.5cm}\begin{minipage}{.35\textwidth}
   \begin{gather*}
\qquad  \tilde\zeta\e(t)=1\qquad \mbox{ for } t\leq  \eps\mbox{ or } t\geq L- \eps,\\
\qquad  \tilde\zeta\e(t)=0\qquad \mbox{ for } 2\;  \eps\leq t\leq L-2\;  \eps,\\
\end{gather*} 
 \end{minipage}
  \begin{minipage}{.40\textwidth}
 \begin{equation*}
 \mbox{ and }\qquad \lt|\tilde\zeta\e'\rt|\leq \frac{1}{  \eps}.
 \end{equation*}
 \end{minipage}

 This choice ensures that $\s\e$ has all the properties previously obtained with $ r_*\,\eps$ replaced by $\eps$ accordingly. Define
 \[
 w\e(t):=
 \begin{dcases}
  \qquad\;\eta,&\mbox{ for }  t\leq \sqrt{3} \eps\\
  \frac{1-\eta}{r-\sqrt{3}}(t-\sqrt{3})+\eta,&\mbox{ for } \sqrt{3}\eps\leq  t\leq r.
\end{dcases}
 \]
 and set 
 \[
 u\e=\min\{\ov u\e(d(x,I)), w\e(|x|),w\e(|x-(L;0)|)\}.
 \]
 with these choices for $u\e$ and $\s\e$ the estimates follow analogously with small differences in the constants.

 \begin{figure}\centering
 \includegraphics[scale=.7]{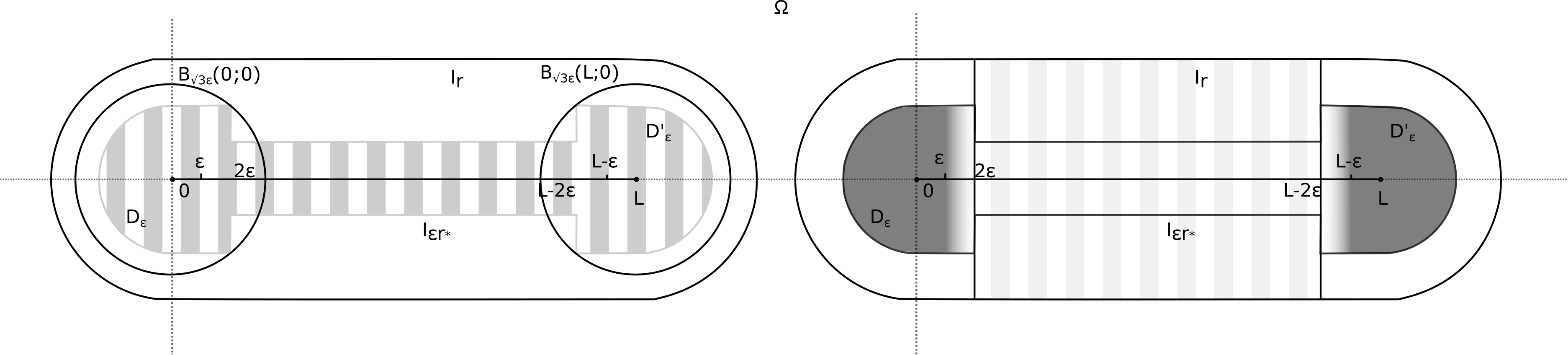}
 \caption{On the left the striped region corresponds to $\supp(\s\e)$, remark that the balls of radius $\sqrt{3}\eps$ centered respectively in $(0;0)$ and $(L;0)$ contain the modifications we have performed to satisfy the constraint. On the right we illustrate the level-lines of the cutoff function $\tilde \zeta\e$ in grayscale.}\label{figura5}
\end{figure}

%

 \medskip
 \textit{Step 2. (Case of a generic $\s$ in polyhedral form.) }Indeed, in force of the results quoted in Subsection~\ref{subSection: funcOnCurr} it is sufficient to show equation~\eqref{eq: 1limsup} for a polyhedral vector measure. Following the same notation introduced therein let 
 \begin{equation*}
  \s=\sum_{j=1}^N m_j \H^1\restr{\Sigma_j}\;\nu_j.
 \end{equation*}
 With no loss of generality we can assume that the segments $\Sigma_{j}$ intersect at most at their extremities. We consider measures $\s$ satisfying constraint~\eqref{point sources} so that if a point $P$ belongs to $\Sigma_{j_1},\dots,\Sigma_{j_P}$ it must satisfy of Kirchhoff law,
 \begin{equation}\label{Kirchhoff}
  \sum_{j_1}^{j_P} z_j\,m_{j}=
  \begin{dcases}
   c_i,& \mbox{ if } P\in\mathscr{S}.\\
   0,&\mbox{ otherwise. }
  \end{dcases}
 \end{equation}
 where $z_j$, is $+1$ if $P$ is the ending point of the segment $\Sigma_j$ with respect to its orientation, and $-1$ if it is the starting point. Let $\s\e^j$ and $u\e^j$ be the sequences constructed above for each segment $I_k$ and define 
 \begin{equation*}
  \s\e=\sum_{j=1}^N\s\e^j\qquad\mbox{ and }\qquad u\e=\min_j\lt\{u^j\e\rt\}.
 \end{equation*} 
 Let $P_j$ and $Q_j$ be respectively the initial and final point of the segment $\Sigma_j$ and recall that, by the construction made above, for each $j$
\[
\nb \cdot \s\e^j=m_j\lt(\d_{P_j}-\d_{Q_j}\rt)*\rho\e
\] 
then by linearity of the divergence operator, it holds
\[
\nb\cdot\s\e=\sum_{j=1}^N \nb \cdot\s\e^k=\sum_{j=1}^N m_j\, \lt(\d_{P_j}-\d_{Q_j}\rt)*\rho\e
\]
and the latter satisfies constraint~\eqref{point sources eps} in force of equation~\eqref{Kirchhoff}.
To conclude let us prove that
\begin{equation}\label{ineq:limsup}
\limsup_{\eps\dw 0} \F_{\eps,a} (\s\e,u\e;\Om)\leq \sum_{j=1}^N f_a(m_j)\H^1(\Sigma_j).
\end{equation}
Indeed the following inequality holds true
\begin{equation*}
\F_{\eps,a} (\s\e,u\e;\Om)\leq \sum_{j=1}^N\F_{\eps,a} (\s\e,u^j \e;\Om).
\end{equation*}
Suppose
\[
\supp(\s^{j_1}\e)\cap \supp(\s^{j_2}\e)\cap\dots\cap\supp{\s^{j_P}\e}\neq \void 
\] 
for some $j_1,\dots,j_P$ and all $\eps$. Let $r_*^{j_1},\dots,r_{*}^{j_P}$  be the radii  introduced above for each of these measures, let $\ov r_*= \max\{r_*^{j_1},\dots, r_*^{j_P},1\}$ , set $\ov m=\max\{m_{j_1},\dots,m_{j_P}\} $ and consider  $D_{j_1},\dots, D_{j_P}$ as defined previously. Since 
\[
\lt|\;\sum_{k=1}^{j_P}\s^k\e\;\rt|^2\leq C\sum_{k=1}^{j_P}\lt|\s^k\e\rt|^2 
\]
 and $u\e\leq u^j\e$ for any $j$, we have the following inequality
\[
\F_{\eps,a} (\s\e,u\e;\supp(\s^{j_1}\e)\cap\dots\cap \supp(\s^{j_P}\e))\leq C\sum_{k=j_1}^{j_P}  \F_{\eps,a} (\s^k\e,u^k\e;D_k)
\]
And by inequality~\eqref{ineq:Deps} follows
\[
\F_{\eps,a} (\s\e,u\e;\supp(\s^{j_1}\e)\cap\dots\cap \supp(\s^{j_P}\e))\leq C\lt(\frac{(1-\eta)^2}{\eps^{n-1}}\sum_{k=j_1}^{j_P}\mathscr{L}^n(D_k)+2\,\ov m^2\, \ov r_*^2 \;\frac{\eta}{\eps^{n-2}}\rt).
\]
Which vanishes as $\eps\dw0$. Let us remark that the intersection $\supp(\s^{j_1}\e)\cap \supp(\s^{j_2}\e)\cap\dots\cap\supp{\s^{j_P}\e}$ is non empty  for any $\eps $ only if the segments $\Sigma_{j_1}, \dots, \Sigma_{j_P} $ have a common point. Since we are considering a polyhedral vector measure composed by $N$ segments the worst case scenario is that we have $2N$ intersections in which at most $N$ segments intersects. We conclude
\begin{equation*}
\F_{\eps,a} (\s\e,u\e;\Om)\leq \sum_{j=1}^N\F_{\eps,a} (\s^j\e,u^j \e;\Om)+C(N)\lt(\frac{(1-\eta)^2}{\eps^{n-1}}\sum_{k=j_1}^{j_P}\mathscr{L}^n(D_k)+2\,\ov m^2\, \ov r_*^2 \;\frac{\eta}{\eps^{n-2}}\rt)
\end{equation*}
which, passing to the limit, yields inequality~\eqref{ineq:limsup}. 
\end{proof}

\section{The $k$-dimensional problem }\label{Section: kProblem}
\subsection{Setting}\label{subSection: ksetting}
Let $\s_0\in P_{k}(\Om)$ a polyhedral $k$-current with finite mass and let $\mathscr{S}:=\supp(\de\s_0)$ be compactly contained in $\Om$. We want to minimize a functional of the type~\eqref{Def: F} where the set of candidates ranges among all currents $\D_k(\ov\Om)$ such that 
\begin{equation*}\label{point sources k}
 \de \s =\de\s_0 \qquad\mbox{ in } \D^k(\R^n).
\end{equation*}
Let us introduce a parameter $\eta=\eta(\eps)$ which satisfies
\begin{equation}\label{Def: ak}
\eta(\eps)=a\eps^{n-k+1}\quad \mbox{ for }\quad a\in\R_+
\end{equation}
and let $X\e(\Om)$ be the set of couples $(\s\e,u\e)$ where $u\e\in W^{1,p}(\Om,[\eta,1])$ and has trace $1$ on $\de\Om$ and $\s\e$ is of finite mass with density absolutely continuous with respect to $\mathscr{L}^n$. In this case we identify the current $\s\e$ with its $L^1(\Om,\Lambda_k(\R^n))$ density. Furthermore as in equation~\eqref{point sources eps} given a convolution kernel $\rho\e$ we impose the constraint
\begin{equation*}\label{point sources eps k}
 \de\s\e=(\de \s_0)* \rho\e\qquad\mbox{ in } \D^k(\R^n).
\end{equation*}  
For $(\s\e,u\e)\in \D_k(\ov\Om)\times L^2(\Om)$ let 
\begin{equation}\label{Def: kFeps}
\F^k_{\eps,a}(\s\e, u\e;\Om)\, :=\begin{dcases}
                         \, \int_\Om\lt[ \eps^{p-n+k}  |\nb u\e|^p +\dfrac{(1-u\e)^2}{\eps^{n-k}} +
\dfrac{u\e|\sigma\e|^2}{\eps}\rt]\dx,&\mbox{if }  (\s\e,u\e)\in X\e(\Om),\\
\qquad\qquad\;\;+\oo,& \mbox{otherwise.}
                        \end{dcases}
\end{equation}
Let us denote with $X$ the set of couples $(\s,u)$ such that $\s$ is a $k$-rectifiable current satisfying~\eqref{point sources k} and $u\equiv1$. In this section we show that for any sequence $\eps\dw0$ the $\Gamma$-limit of the family $(\F_{\eps,a}^k)_{\eps\in\R_+}$ is the functional
\begin{equation}\label{Def: klimit}
 \F^k_a(\s,u;\ov\Om)=\begin{dcases}
             \int_{\supp{\sigma}} f^{n-k}_a(m(x))\dH^k(x),&\mbox{if }  (\s,u)\in X\\
             \qquad\qquad\;\;+\oo,&\mbox{otherwise in }\M(\Om,\R^n) \times L^2(\Om)
            \end{dcases}
\end{equation} 
Where the function $f^{n-k}_a:\R_+\rw\R_+$ is the function obtained in Appendix~\ref{appendix: Reduced Problem} for the choice $d=n-k$ and is endowed with the same properties stated for $f$ in Section~\ref{Section: Introduction}. In particular under the assumption $p>n-k$ we first prove a compactness theorem.
\begin{theorem}\label{teo: kcompactness}
Assume that $a>0$. For any sequence $\eps\dw0$, $(\s\e,u\e)\in \D_k(\ov\Om)\times L^2(\Om)$ such that 
\[
\F^k_{\eps,a}(\s\e,u\e;\Om)\leq F_0<+\oo
\]
then $u\e\rw1$ and there exists a rectifiable $k$-current $\s\in \D_k(\ov\Om)$ such that, up to a subsequence, $\s\e\rwstar\s$ and $(\s,1)\in X$. 
\end{theorem}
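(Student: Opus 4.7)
My plan is to mirror the proof of Theorem~\ref{teo: 1compactness} in three stages: a uniform mass bound on $\s\e$, weak-$*$ extraction of a limit $\s$, and rectifiability of $\s$ via a local energy lower bound obtained by slicing in codimension $n-k$.

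First, I would reproduce the argument of Lemma~\ref{lemma: mass bound} to get $\sup_\eps M(\s\e)<\oo$. For $\lambda\in(0,1]$, splitting $\Om$ according to whether $u\e>\lambda$ or $u\e\le\lambda$, Cauchy--Schwarz on the first piece and Young's inequality on the second yield
\[
M(\s\e)\,\le\,\sqrt{F_0|\Om|\eps/\lambda}+\frac{F_0}{2}+\frac{F_0}{2\,a\,(1-\lambda)^2},
\]
the only change from the $k=1$ case being that now $\eta/\eps^{n-k+1}=a$ matches the scaling $(1-u\e)^2/\eps^{n-k}$ in $\F^k_{\eps,a}$. The bound $\int(1-u\e)^2\le F_0\eps^{n-k}\to 0$ gives $u\e\to 1$ in $L^2(\Om)$. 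Banach--Alaoglu then provides, up to a subsequence, $\s\e\rwstar\s\in\M(\ov\Om,\Lambda_k\R^n)$ and $|\s\e|\rwstar\mu$ with $|\s|\le\mu$. The mollified constraint $\de\s\e=(\de\s_0)*\rho\e$ passes to the weak-$*$ limit and gives $\de\s=\de\s_0$, so $\s$ is a normal $k$-current.

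For rectifiability I would adapt the scheme of Proposition~\ref{proposition: length bound}. Define $\tilde\Sigma$ as the set of $x\in\ov\Om$ with $|\s|(B_r(x))>0$ for every $r>0$ and such that $\s(B_r(x))/|\s|(B_r(x))$ converges as $r\dw 0$ to a simple unit $k$-vector $\xi(x)$; by Besicovitch differentiation $\s=\xi|\s|\restr\tilde\Sigma$. Fix $\theta<1/4^n$ and remove the ``shrinking'' set $\Gamma$ of points at which $|\s|(B_{r/4}(x))\le\theta|\s|(B_r(x))$ for all small $r$. The proof of Lemma~\ref{lemma: Gamma negligible} uses only the volume scaling of $\R^n$ and is insensitive to $k$, so $|\s|(\Gamma)=0$ and $\Sigma:=\tilde\Sigma\setminus\Gamma$ carries $\s$. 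The heart of the proof is then to establish, for $|\s|$-a.e.\ $x\in\Sigma\setminus\mathscr{S}$, a local lower bound of the form
\[
\liminf_{\eps\dw 0}\F^k_{\eps,a}(\s\e,u\e;B_{r_j}(x))\,\ge\,\kappa\,r_j^k
\]
for some $\kappa>0$ independent of $x$ and a sequence $r_j\dw 0$. Granted this, a Besicovitch covering argument as in Proposition~\ref{proposition: length bound} gives $\H^k(\Sigma)\le CF_0/\kappa<\oo$, and White's criterion (Theorem~\ref{teo: krectifiable}), applicable since $\s$ is a normal $k$-current supported on $\Sigma$, yields that $\s$ is rectifiable and hence $(\s,1)\in X$.

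The hard step is this $k$-dimensional slicing estimate. After translating $x$ to $0$ and rotating so $\xi(0)=e_1\wedge\cdots\wedge e_k$, I would write a point as $(y,z)\in\R^k\times\R^{n-k}$ and localize inside the slab $\{|y|<r_*\}\times\{|z|<r_*\}\subset B_r$ with $r_*=r/\sqrt{2}$. Introducing a radial transverse cutoff $\chi_{r_*}(z)$, the natural quantity to monitor is the transverse flux
\[
g\e(y)\,:=\,\int_{\R^{n-k}}\bigl\langle\s\e(y,z),\,e_{k+1}\wedge\cdots\wedge e_n\bigr\rangle\,\chi_{r_*}(z)\,\di z.
\]
Since $\de\s\e=0$ inside the slab (valid for small $\eps$ as $0\notin\supp\de\s_0$), a Stokes-type computation --- the higher-codimension analogue of the one in Lemma~\ref{lemma: local length Sigma}, now encoding simultaneous closedness in $k$ tangent directions instead of one --- bounds the variation of $g\e$ in $y$ by components of $\s\e$ that vanish in the limit by the approximate-tangent property at $0$. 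Combined with the non-shrinking condition $x\notin\Gamma$, this should force $g\e(y)\ge(1-o(1))\bar m$ for some $\bar m>0$ on a.e.\ tangent slice. On each such slice $\{y\}\times B'_{r_*}$, the restriction of the energy is exactly the codimension-$(n-k)$ functional $E_{\eps,a}$ of Subsection~\ref{subSection: reducedProblem} with transverse mass at least $\bar m(1-o(1))$, so Proposition~\ref{prop: structurefm} contributes a slicewise lower bound $\kappa+o(1)$; integrating in $y$ over $\{|y|<r_*\}$ supplies the missing factor $r^k$. The main obstacle is precisely this $k$-fold flux-closedness in the language of currents (which replaces the single scalar flux of the $k=1$ argument); once it is set up correctly, everything else transposes verbatim from Section~\ref{Section: 1Problem}.
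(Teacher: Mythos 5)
Your overall architecture coincides with the paper's: the mass bound and the convergence $u\e\to1$ are verbatim transpositions of Lemma~\ref{lemma: mass bound} (with the scaling $\eta/\eps^{n-k+1}=a$, as you note), the definition of $\Sigma$ via Besicovitch densities and the removal of the shrinking set $\Gamma$ carry over unchanged, and the endgame (Besicovitch covering plus White's criterion, Theorem~\ref{teo: krectifiable}) is exactly what the paper does. The problem is the step you yourself flag as ``the main obstacle'': you defer the $k$-dimensional flux argument rather than carry it out, and the hints you give for it would not work as stated, so there is a genuine gap precisely where the new mathematics of the $k>1$ case lives.

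Concretely, three issues. First, your $g\e$ pairs the $k$-vector-valued density $\s\e(y,z)$ with the $(n-k)$-vector $e_{k+1}\wedge\cdots\wedge e_n$, which is not a well-defined pairing; the correct quantity (the flux through the transverse disk $\{y\}\times B''_{r_*}$) is $\int \chi_{r_*}(z)\,\langle\s\e,e_1\wedge\cdots\wedge e_k\rangle\,\mathrm{d}z$, i.e.\ the \emph{tangential} component integrated transversally. Second, you propose to bound the variation of $g\e$ by ``components of $\s\e$ that vanish in the limit by the approximate-tangent property at $0$''. This is circular: the existence of an approximate tangent plane is equivalent to the rectifiability you are trying to prove. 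What is actually available at a point of $\Sigma$ is only the Besicovitch density $w(0)=\lim_r \s(B_r)/|\s|(B_r)$, and testing $\de\s\e=0$ against forms $\beta\,\mathrm{d}x_1\wedge\dots\wedge\mathrm{d}x_{l-1}\wedge\mathrm{d}x_{l+1}\wedge\dots\wedge\mathrm{d}x_k$ shows that $\nabla' g\e$ is controlled by exactly the components $\s\e^I$ with $I\in\I_1$, the multi-indices sharing $k-1$ entries with $(1,\dots,k)$. To make these small one needs the algebraic normalization that opens the paper's proof: in a basis maximizing the $e_{I_0}$-component, the unit $k$-vector $w(0)$ has \emph{no} $\I_1$-components and $a_0\ge1/\sqrt{|\I|}$. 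Without this lemma---which has no analogue for $k=1$ and which your sketch omits---the alignment condition $\s(B_r)\cdot w\ge(1-\xi)|\s|(B_r)$ does not control the quantities appearing in $\nabla' g\e$. Third, for $k\ge2$ an $L^1$ bound on $D'g$ over a $k$-dimensional ball does not give $g\e\ge(1-o(1))\bar m$ for a.e.\ $y$ as you claim; the paper instead applies the Poincar\'e--Wirtinger inequality to obtain $g\e\ge\ov g/8$ only on a set $A\e$ with $|A\e|\ge|B'_{\hat r}|/2$, which still suffices for the $\H^k$ bound. These three points are the genuinely new ingredients of the $k$-dimensional compactness proof, and they are precisely what your proposal leaves unproved.
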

Then we show the $\Gamma$-convergence result, namely 
\begin{theorem}\label{teo:kGammalimit}
Assume that $a\geq0$.
\begin{enumerate}
 \item For any $(\s,u)\in\D_k(\ov\Om)\times L^2(\Om)$ and any sequence $(\s\e,u\e)\in\D_k(\ov\Om)\times L^2(\Om)$ such that $(\s\e,u\e)\rw(\s,u)$ it holds
\begin{equation*}
\liminf_{\eps\dw0}\F^k_{\eps,a}(\s\e,u\e;\Om)\geq \F^k_a(\s,u;\ov\Om).
\end{equation*}
\item For any couple $(\s,u)\in \D_k(\ov\Om)\times L^2(\Om)$ there exists a sequence $(\s\e,u\e)\in\D_k(\ov\Om)\times L^2(\Om)$ such that $(\s\e,u\e)\rw(\s,u)$ and 
\begin{equation*}
\limsup_{\eps\dw0}\F^k_{\eps,a}(\s\e,u\e;\Om)\leq \F^k_a(\s,u;\ov\Om).
\end{equation*}
\end{enumerate} 
\end{theorem}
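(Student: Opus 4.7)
The plan is to mimic the structure of Theorems~\ref{teo: 1Gammaliminf} and~\ref{teo: 1Gammalimsup}, using the one-codimensional results of Subsection~\ref{subSection: reducedProblem} (applied now with $d=n-k$) as the engine. The main new feature is that the slicing/product construction must be carried out in the $k$-tangent/$d$-normal splitting of $\R^n=\R^k\times\R^{d}$ rather than in $\R\times\R^{n-1}$.

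\textbf{$\Gamma$-liminf.} Assume $\liminf_{\eps\dw0}\F^k_{\eps,a}(\s\e,u\e;\Om)<\infty$. Theorem~\ref{teo: kcompactness} produces a rectifiable limit $\s=(m,\xi,\Sigma)$ with $u\equiv 1$. Define the set function $H(A):=\liminf_\eps\F^k_{\eps,a}(\s\e,u\e;A)$; it is subadditive, and the same truncation/Vitali--Besicovitch argument used in the proof of Theorem~\ref{teo: 1Gammaliminf} reduces the global bound $H(\Om)\geq\int_\Sigma f^{n-k}_a(m)\,d\H^k$ to the local estimate
\[
\liminf_{r\dw 0}\ \frac{H(\ov{B(x,r)})}{\om_k r^k}\ \geq\ f^{n-k}_a(m(x))\qquad\text{for }\H^k\text{-a.e.\ }x\in\Sigma.
\]
Fix such an $x\notin\mathscr{S}$; after rigid motion we may assume $x=0$ and the approximate tangent $k$-plane is $\R^k\times\{0\}$, with multiplicity $\ov m=m(0)$. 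For $\delta\in(0,1)$ choose a radial cutoff $\chi^\delta\in C_c^\infty(\R^d,[0,1])$, equal to $1$ on $B'_{\delta/2}$, vanishing outside $B'_\delta$, and set, for $y\in\R^k$,
\[
g_\eps^{\delta,r}(y)\ :=\ \bigl\langle \s\e,\ e_1\wedge\cdots\wedge e_k\bigr\rangle\,(y,\cdot)\ \text{tested against}\ \chi^\delta(\cdot/r).
\]
Since $\partial\s\e$ is supported near $\mathscr{S}$, inside the cylinder $C_r^\delta=\{|y|<\delta r,\ |x'|<r\}$ one has $\partial\s\e=0$ and the exact same computation as in Lemma~\ref{lemma: local length Sigma} shows that, after mean-value subtraction in $y$, the increments $g_\eps^{\delta,r}(y)-\overline{g_\eps^{\delta,r}}$ are controlled by the mass of the non-tangential part of $\s$ in the cylinder, which is $o(r^k)$ by rectifiability (approximate tangent plane plus \eqref{eq: liminf2}--type estimate on $\xi$). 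Thus, for $r$ and then $\eps$ small, each slice $\{y\}\times B'_{r}$ carries a mass close to $(1-o(1))\ov m$ concentrated in $B'_{\delta r}$. Applying Fubini in the $y$-variable and~\eqref{eq: n-1liminf} with $d=n-k$ yields
\[
\F^k_{\eps,a}(\s\e,u\e;C_r^\delta)\ \geq\ (\om_k(\delta r)^k\bigl(1-o(1)\bigr))\, f^{n-k}_{\eps,a}\bigl((1-o(1))\ov m,r,\delta r\bigr),
\]
and one concludes by sending $\eps,r,\delta\to 0$ in this order and using lower semicontinuity of $f^{n-k}_a$.

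\textbf{$\Gamma$-limsup.} By Subsection~\ref{subSection: funcOnCurr} and the density of polyhedral chains in $F_k(\Om)$ (together with the lower semicontinuous relaxation identity), it is enough to treat $\s=\sum_j m_j\,[\![ \Sigma_j]\!]$ polyhedral. For a single $k$-simplex $\Sigma_j$ we write points as $(y,x')\in\R^k\times\R^{d}$ with $\Sigma_j\subset\R^k\times\{0\}$. Let $r_*$ and $v_\delta$ be the optimal radius and profile from Proposition~\ref{prop: flimsup} (with $d=n-k$), and set $u\e(y,x')=\ov u\e(\text{dist}(x',0))$ in a tubular neighborhood and $u\e=1$ outside, while
\[
\s\e\ :=\ m_j\bigl[\H^k\!\restr\Sigma_j\ \xi_j\bigr]\ast\tilde\rho\e,\qquad \tilde\rho\e(y,x')\,=\,\frac{\mathbf{1}_{B'_{r_*\eps}}(x')}{\om_d(r_*\eps)^d},
\]
so that $\s\e$ is the $k$-vector $\xi_j$ multiplied by the tube density. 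By Fubini the bulk energy on the cylinder $\Sigma_j\times B'_r$ equals $\H^k(\Sigma_j)\cdot E_{\eps,a}(\vartheta\e,u\e;B_r)$ plus lower-order terms, hence is bounded by $\H^k(\Sigma_j)(f^{n-k}_a(m_j)+\delta)$. At the $(k-1)$-faces where simplices meet, the Kirchhoff balance $\sum_{j}\varepsilon_{j}\,m_j\,\xi_j=\partial\s_0$ (in the sense of $(k-1)$-currents at the junction) allows us, exactly as in~\eqref{divesigma3}, to introduce a correction $\s\e^3$ supported in an $O(\eps)$-neighborhood of each face, solving $\Div_{x'}\s\e^3=\text{(mollified local imbalance)}$ by a radial integration formula in the $d$ transverse coordinates. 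Because $|\s\e^3|=O(\eps^{-(d-1)})$ on a set of volume $O(\eps^{d})\cdot\H^{k-1}(\text{face})$, the contribution of the correction to $\F^k_{\eps,a}$ is $O(\eps)$ and vanishes in the limit, exactly as in the estimate~\eqref{ineq:Deps}. Summing over simplices yields the limsup bound $\H^k(\Sigma_j)f^{n-k}_a(m_j)$ on each piece; convergence $\s\e\rwstar\s$, $u\e\to 1$ in $L^2$ and the boundary constraint $\partial\s\e=(\partial\s_0)\ast\rho\e$ follow from the construction.

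\textbf{Main obstacle.} The only genuinely new difficulty lies in the limsup at junctions: in codimension $d\geq 2$ the correction field $\s\e^3$ must be a $k$-vectorfield whose boundary cancels the mismatch between the flat, convolved polyhedral current and the tube-supported $\s\e$ along each $(k-1)$-face. This amounts to solving a divergence equation in the $d$-dimensional transverse slice with prescribed jump data; the explicit radial primitive used in the proof of Theorem~\ref{teo: 1Gammalimsup} generalizes but requires one to verify that the Kirchhoff law~\eqref{Kirchhoff} (now a compatibility condition on $(k-1)$-multivectors at each face) ensures solvability and that the resulting $L^2$-norm of $\s\e^3$ scales like $\eps^{-(d-2)/2}$ on a volume $\eps^{d}$, so that its $\eps^{-1}\!\int u\e|\s\e^3|^2$ contribution is $o(1)$. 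The liminf step, by contrast, is essentially a bookkeeping extension of the 1-dimensional argument once the correct tangent/normal splitting is set up.
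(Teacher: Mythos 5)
Your overall strategy coincides with the paper's: reduce the liminf to a local density estimate via Vitali--Besicovitch, slice along the tangent $k$-plane and feed the transverse slices into the reduced $(n-k)$-dimensional problem of Appendix~\ref{appendix: Reduced Problem}; for the limsup, reduce to polyhedral currents and build tube profiles with a radial divergence correction near the $(k-1)$-faces. However, two steps you treat as routine are precisely where the $k\geq 2$ case departs from $k=1$. First, in the liminf: for $k=1$ the increment formula plus the mean value gives a pointwise (a.e.) lower bound on $g\e(t)$ for every $t$, because a one-variable function is controlled in $L^\infty$ by its mean and the $L^1$ norm of its derivative. For $k\geq 2$ the divergence-free condition only yields a bound on $|D'g|(B'_{\hat r})$ (a total-variation bound on a function of $k$ variables), and from this you cannot conclude that ``each slice carries mass close to $\ov m$'' as you assert; you only get, via a Poincar\'e--Wirtinger inequality, a subset $A\e\subset B'_{\hat r}$ of almost full measure on which $g\e$ is bounded below. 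This is exactly how the paper proceeds, and the missing Poincar\'e--Wirtinger step is the one genuinely new analytic ingredient of the $k$-dimensional liminf. Relatedly, your cylinder $C_r^\delta=\{|y|<\delta r,\ |x'|<r\}$ has the roles of the radii reversed: the parameter $\delta$ must shrink the \emph{transverse} radius ($\tilde r=\delta r$) while the tangential radius stays comparable to $r$ ($\hat r=\sqrt{1-\delta^2}\,r$); with your convention the final bound $\om_k(\delta r)^k\,f^{n-k}_{\eps,a}(\cdot)$ divided by $\om_k r^k$ degenerates as $\delta\dw0$ and the local estimate does not close.

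Second, in the limsup: the reduction to polyhedral currents is not covered by the generic density of $P_k(\Om)$ in $F_k(\Om)$ in flat norm and energy, because that approximation does not preserve the constraint $\de\s=\de\s_0$ (the approximants of \cite{CoDeMaSt17} create new boundary). For $k=1$ one can invoke Xia's density result, but for $k\geq2$ the paper has to rely on a dedicated boundary-preserving polyhedral approximation theorem (\cite{RepresentationCFM}), which it singles out as a result of independent interest. Your appeal to ``density of polyhedral chains together with the relaxation identity'' glosses over this point, and without it the limsup argument only applies to polyhedral data. The junction correction $\s\e^3$ that you identify as the main obstacle is, by contrast, handled in the paper simplex by simplex (interpolating between $\s*\rho\e$ near the relative boundary of each face and the tube current in the interior), so the compatibility at junctions follows from linearity of $\de$ rather than from a separate solvability analysis; your scaling estimates for $\s\e^3$ are consistent with the paper's~\eqref{ineq:Deps}.
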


\subsection{Compactness and $k$-rectifiability}\label{subSection: kcompactness}
\begin{proof}[Proof of Proposition~\ref{teo: kcompactness}.]
By the same procedure of Lemma~\ref{lemma: mass bound} we obtain 
\begin{equation}\label{eq: finitekmass}
|\s\e|(\Om)\leq\frac{F_0}{2}\;+\;\frac{F_0}{2\,a \,(1-\lambda)^2}\;+\;\sqrt{\frac{|\Om|\,\eps\, F_0}{\lambda}}
\end{equation}
and
\begin{equation*}
\int_{\Om}(1-u\e)^2\leq \eps^{n-k}\, F_0.
\end{equation*}
Therefore by the weak compactness of $\D_{k}(\Om)$ we obtain the existence of a limit $k$-current $\s$ a limit measure $\mu$ and a subsequence $\eps$ such that $\s\e\rwstar \s$, $|\s\e|\rwstar\mu$.  As in the $1$-dimensional case it is still necessary to prove the rectifiability of the limit current. This is obtained by showing that the support of $\s$ is of finite size.

\medskip 
\textit{Step 1. (Preliminaries and good representative for $v\in\Lambda_k(\R^n)$.) }\quad
Let us introduce the set
\begin{equation*}
\I:=\{I=(i_1,\dots,i_k): 1\leq i_1< i_2<\dots< i_k\leq n\},\qquad e_I=e_{i_1}\wedge \dots\wedge e_{i_k}
\end{equation*}
So that $\Lambda_k(\R^n)$ is the Euclidean space with basis $\{e_I\}_{I\in\I}$.
Let $v\in\Lambda_k(\R^n)$ and consider the problem
\begin{equation*}
a_0=\max\{a\in\R\,:\,v=a f_1\wedge\dots\wedge f_k+t\,:\,(f_1,\dots,f_n) \mbox{ orthonormal basis, } t\in (f_1\wedge\dots\wedge f_k)^\perp\}.
\end{equation*}
Notice that $a_0\geq1/\sqrt{|\I|}$. Assume that the optimum for the preceding problem is obtained with $(f_1,\dots,f_n)=(e_1,\dots,e_n)$. We note 
\begin{equation*}
v=a_0 e_{I_0}+\sum_{i\in\I_1}a_I e_I+\sum_{I\in\J} a_I e_I
\end{equation*}
with
\begin{equation*}
    I_0=e_1\wedge\dots\wedge e_k,\qquad \I_1:=\{I=(i_1,\dots,i_k)\in \I \,:\, 1\leq i_1<\dots< i_{k-1}\leq k<i_k\leq n\},\qquad \J:=\I\sm (\I_1\cap I_0).
\end{equation*}
We claim that $a_I=0$ for $I\in \I_1$. Indeed, let $I_1=(e_1,\dots,e_{l-1},e_{l+1},\dots,e_k,e_h)\in \I_1$ and for $\phi \in \R$, let $e^{\phi}$ be orthonormal base defined as 
\begin{equation*}
e_i = e^\phi_i \qquad \mbox{for } i\neq \{l,h\}, \qquad e_l=\cos(\phi)e^\phi_l-\sin(\phi) e^\phi_h, \qquad e_h=\sin(\phi)e^\phi_l+\cos(\phi) e^\phi_h.
\end{equation*} 
In this basis 
\begin{equation*}
v=\lt(a_0\cos(\phi)+a_{I_1}(-1)^{k-l}\sin(\phi)\rt) e^\phi_{I_0}+t^{\phi}, \quad \mbox{with} \quad w^\phi\in(e^\phi)^\perp.
\end{equation*}
By optimality of $(e_1,\dots,e_n)$ we deduce $a_{I_1}=0$ which proves the claim. Hence we write 
\begin{equation}\label{eq: localk1}
v=a_0 e_{I_0} +t,\quad \mbox{with } t\in \mbox{span}\{e_I\,:\,I\in\J\}.
\end{equation}
Now we let 
Let $\theta\in (0, 1/4^n)$ and $\Sigma$ be the set of points for which there exists a sequence $r_j\dw 0$ such that 
\begin{equation*}
\frac{\s(B_{r_j}(x))}{|\s|(B_{r_j}(x))}\longrightarrow w(x)\in S\Lambda_k(\R^n) \quad \mbox{and} \quad \frac{|\s|(B_{r_j/4}(x))}{|\s|(B_{r_j}(x))}\geq\theta.
\end{equation*}
In particular $w$ is a $|\s|$-measurable map and we have $\s=w\,|\s|\restr\Sigma$.

%
%

\medskip
\textit{Step 2. (Flux of $\s\e$ trough a small $(n-k)$-disk.)}\quad Consider a point $x\in \Sigma \sm \mathscr{S}$, with no loss of generality we assume $x=0$. Let $v=w(0)$, up to a change of basis, by equation~\eqref{eq: localk1} we write
\[
v=a_0 e_{I_0} +t,\quad \mbox{with } t\in \mbox{span}\{e_I\,:\,I\in\J\}.
\]
Let $j$ sufficiently small, such that $B_{r_j}\cap \mathscr{S}=\void $ and 
\begin{equation}\label{eq: localk2}
\s(B_{r_j})\cdot v\geq (1-\xi)|\s|(B_{r_j}).
\end{equation}
Set, to simplify notation, $r_j=r$ and $r_*=r/\sqrt{2}$. For $x\in \R^n$ we write $(x',x'')\in \R^{k}\times \R^{n-k}$ for the usual decomposition and denote $B'_r$, $B''_r$  the $k$-dimensional  and the $(n-k)$-dimensional ball respectively. Let $\chi\in C^{\oo}(B''_1)$ be a radial cut-off function with $\chi(x'')=1$ for $|x''|\leq 1/2$ and $\chi(x'')=0$ for $|x''|\geq 3/4$. Set $\chi_{r_*}(x'')=\chi(x''/r_*)$, then since $\s\e$ is a $L^1$ function for $\eps>0$ we can define 
\begin{equation}\label{eq: localk3}
g\e(x'):=\int_{B''_{r_*}}\chi_{r_*}(x'')\langle \s\e,e_{I_0}\rangle \dx''=\int_{B''_{r_*}} \chi_{r_*}(x'')\s\e^0\dx''
\end{equation}
for any $x'\in B'_{r_*}$.
Let us compute $\de_l g\e(x')$ for $l\in\{1,\dots,k\}$. 
Since $\de \s\e =0$ in $B_{r}$,  it holds $\langle \s\e,\di\om \rangle=0$ for any smooth $(k-1)$-differential form $\om\in \D^{k-1}(B_{r})$. Choosing $\om $ of the form 
\begin{equation}\label{differentialform}
 \om= \b(x) \dx_1\wedge \dots \dx_{l-1}\wedge \dx_{l+1}\wedge \dots \wedge \dx_k
 \end{equation} 
 we obtain 
 \begin{equation*}
 \di \om= (-1)^{l-1} \de_l \b(x) \dx_1\wedge \dots  \wedge \dx_k + (-1)^{k-1}\sum_{h=k+1}^d \de_h \b(x)\dx_1\wedge \dots \dx_{l-1}\wedge \dx_{l+1}\wedge \dots \wedge \dx_k\wedge \dx_{h}.
 \end{equation*}
 Denote $\sigma^I\e=\langle \sigma, e^I\rangle $, then imposing $\langle\s\e,\di\om\rangle=0$ for every $\b\in C^{\oo}_c(B_{r})$ in~\eqref{differentialform} yields 
\begin{equation*}
 (-1)^{k-l} \de_l \s^0\e+\sum\limits_{\substack{h\in\{k+1,\dots,d\}\\ I=(1,\dots,l-1,l+1,\dots,k,h)}}\de_h \s\e^I=0.
 \end{equation*} 
 Hence, 
 \begin{equation}\label{eq: localk4}
 \de_l g\e(x')=\frac{(-1)^{k-l}}{r_*}\sum\limits_{\substack{h\in\{k+1,\dots,d\}\\ I=(1,\dots,l-1,l+1,\dots,k,h)}}\int_{B''_{r_*}}\de_h\chi_{r_*}(x'')\s^I\e\dx''.
 \end{equation}
Let us introduce the notation
\begin{equation*}
 \s\e^{\I_1}:=\sum_{I\in\I_1} \s\e^I\,e_I,
 \end{equation*} 
denoting with $\nb'$ the gradient with respect to $x'$, equation~\eqref{eq: localk4} rewrites as 
 \begin{equation}\label{eq: localk5}
 \nb'g\e(x')=\frac{1}{r_*}\int_{B''_{r_*}} Y\lt(\frac{x}{r_*}\rt) \s\e^{\I_1}\dx''.
 \end{equation}
Where $Y$ is smooth and compactly supported in $B''_1$ and with values into the linear maps : $\mbox{span}\{e_I:I\in\I_1\}\rw\R^k$. Let us prove that, for some $\hat r$, the functions $g\e$ converge in BV-$*$ to some $g$. First for a.e. choice of $\hat r\in[(1-\xi)r_*,r_*]$ it must hold $\mu(\de B'_{r_*}\times B''_{r_*})=0$ so that
\begin{equation}\label{eq: localk6}
g\e(x')=\int_{B''_{r_*}}\chi_{r_*}(x'')\langle \s\e,e_{I_0}\rangle \dx''\xrightarrow[]{\eps\dw0} \int_{B''_{r_*}}\chi_{r_*}(x'')\di\langle \s,e_{I_0}\rangle =: g(x').
\end{equation}
Secondly we define the mean value  
\begin{equation*}
  \ov g:= \frac{1}{|B'_{\hat r}|}\int_{B'_{\hat r}} g(x')\dx'=\frac{1}{|B'_{\hat r}|}\int_{B'_{\hat r}}\lt[\int_{ B''_{r_*}} \chi_{r_*}(x'')\,\di\s^0\rt]\dx'.
  \end{equation*}
and taking advantage of~\eqref{eq: localk2} and the definition of $\Sigma$, we see that 
  \[
  \ov g\geq\lt( \frac{\theta }{\sqrt{|\I|}}-\xi\rt)\frac{|\s|(B_{r})}{|B'_{\hat r}|}>0.
  \]
   On the other hand, denoting $\Pi:\R^n\to \R^{n-k}$, $x\mapsto x''$, from~\eqref{eq: localk1}, we have 
   \begin{equation*}
    |\Pi\s|(B'_{\hat r}\times B''_{r_*})\leq \sqrt{3\xi}\lt(\frac{\theta}{\sqrt{|\I|}}-\xi\rt)\;|B'_{\hat r}|\; \ov g.
    \end{equation*}
Now from \eqref{eq: localk5}~-~-~\eqref{eq: localk6} and the latter we obtain 
\begin{equation*}
\langle D'g,\phi\rangle =\frac{1}{r_*}\int_{B'_{\hat r}\times B''_{r_*}}\phi(x') \;Y\lt(\frac{x''}{r_*}\rt)\,\di\s^{\I_1}\quad\mbox{ and }\quad |D'g|(B'_{\hat r})\leq \frac{C\,|B'_{\hat r}|\,\sqrt{\xi}\,\ov g}{r_*}.
\end{equation*}
Finally from Poincar\'e - Wirtinger inequality and the convergence $g\e\rw g$ in $L^1(B'_{\hat r})$ is easy to show that for any sufficiently small $\eps$ the sets
\[
A\e=\lt\{x\in B_{\hat r} \;: \, g\e(x)\geq  \frac{\ov g}{8}\rt\}
\]
are such that $|A\e|\geq |B'_{\hat r}|/2$.

\medskip
\textit{Step 3. (Conclusion.)} \quad Set $\vartheta\e(x',x'')=|\chi_{r_*}(x'')\s^0\e|$ and observe that for fixed $x'$ by construction 
\[
\int_{B_{r_*}} \vartheta\e(x',x'')\dx''=g\e(x').
\]
Therefore for any $x'\in A\e$ it holds $\int_{B_{r_*}} \vartheta\e(x',x'')\dx''\geq \ov g/8$. Furthermore $\supp(\vartheta\e(x'))\subset B'_{\tilde r}$ with $\tilde r :=\frac{3}{4}{r_*}<{r_*}$. 
Now, by Fubini 
\begin{equation}
\begin{aligned}
\F^k_{\eps,a}(\s\e, u\e;B_{r})\, &\geq \int_{A\e}\int_{B''_{r}}\lt[ \eps^{p-n+k}  |\nb u\e|^p +\dfrac{(1-u\e)^2}{\eps^{n-k}} +\dfrac{u\e|\sigma\e|^2}{\eps}\rt]\dx''\dx'\\
&\geq \int_{A\e}\int_{B''_{r_*}}\lt[ \eps^{p-n+k}  |\nb u\e|^p +\dfrac{(1-u\e)^2}{\eps^{n-k}} +\dfrac{u\e|\vartheta\e(x',x'')|^2}{\eps}\rt]\dx''\dx'
\end{aligned}
\end{equation}
With the notation introduced in Subsection~\ref{subSection: reducedProblem} and by defintion of $A\e$ 
\begin{equation*}
\F_{\eps,a}(\s\e,u\e;B_{r})\, \geq\,\int_{A\e}\inf_{(\vartheta,u)\in \ov Y_{\eps,a}(m,r)^{\tilde r}(\ov g/8,r)} E^k_{\eps,a}(\vartheta,u)\, \dx'=\,\int_{A\e}f^{\tilde r}\e\lt(\ov g/8\rt)\, \dx'=f^{\tilde r}\e\lt(\ov g/8\rt)|A\e|.
\end{equation*}
Taking the infimum limit, by Proposition~\ref{prop: structurefm}, in particular equation~\eqref{eq: reducedconstant} we get
\begin{equation}\label{ineq: kfinitelenght}
\liminf_{\eps\dw0} \F^k_{\eps,a}(\s\e,u\e;B_{r})\geq \liminf_{\eps\dw 0}f^{\tilde r}\e\lt(\ov g/8\rt)|A\e|\,\geq\,\kappa\; \frac{|B'_{\hat r}|}{2}.
\end{equation}
Recall that the latter stands for a.e.  $\hat r\in [(1-\xi)r_*,r_*]$ and $r_*=r/\sqrt{2}$ thus we may rewrite
\begin{equation*}
\liminf_{\eps\dw0} \F^k_{\eps,a}(\s\e,u\e;B_{r})\,\geq \,\kappa\; \frac{\om_{k}\;r^k}{2^{1+k/2}}.
\end{equation*}
As in Lemma~\ref{lemma: local length Sigma} we conclude applying Besicovitch theorem  to obtain $\H^k(\Sigma)<+\oo$. Finally, thanks to the latter and equation~\eqref{eq: finitekmass},  Theorem~\ref{teo: krectifiable} applies and $\s$ is a $k$-rectifiable current. 
\end{proof}

 \subsection{$\Gamma$-liminf inequality}\label{subSection: kliminf}
\begin{proof}[Proof of item 1) of Theorem~\ref{teo:kGammalimit}.]
 With no loss of generality we assume that $\liminf_{\eps\dw0} \F^k_{\eps,a}(\s\e,u\e)<+\oo$ otherwise the inequality is trivial. For a Borel set $A\subset \Omega$, we define
 \[
 H^k(A)\,:=\,\liminf_{\eps\dw0}\F^k_{\eps,a}(\s\e,u\e;A),
 \] 
so that $H^k$ is a subadditive set function. By assumption, the limit current $\sigma$ is $k$-rectifiable; we write $\s= m\,\nu\, \H^k\restr \Sigma$. We claim that 
\begin{equation}\label{eq: liminflocalek}
\liminf_{r\dw0}\dfrac{H^k\lt(\ov{B(x,r)}\rt)}{\om_k\, r^k} \, \geq\,f^{n-k}_a(m(x))\qquad\mbox{for $\H^k$-almost every $x\in\Sigma$.}
 \end{equation}
 Assuming the latter the proof is achieved as in Theorem~\ref{teo: 1Gammaliminf}.
To establish the claim~\eqref{eq: liminflocalek} we restrict our attention to a single point and we assume $x=0$, $m=m(0)$ and  $\nu(0)=e_1\wedge\dots\wedge e_k$ then for any $\xi>0$ there exists $r_0=r(\xi)$ such that
 \begin{equation}\label{eq: kstrongineq}
 \langle\s, e_1\wedge\dots\wedge e_k\rangle(B_r) \geq (1-\xi) |\s|(B_r)\quad \mbox{and} \quad (1-\xi)\, m\,\leq \,\frac{|\s|(B_r)}{\om_k r^k}\,\leq \,(1+\xi)\, m, \qquad \mbox{for } r\leq r_0.
 \end{equation} 
Let $\d$ be an infinitesimal quantity and set, for $r<r_0$, $\hat r=\sqrt{1-\d^2}\; r$ and $\tilde  r = \d r$ and define the cylinder 
 \begin{equation*}
C_{\d,r}(e_1,\wedge\dots\wedge e_n)=C_{\d,r}:=\lt\{(x';x'')\in\R^k\times \R^{n-k}:|x'|\leq\hat r \;\mbox{and}\; |x''|\leq {\tilde r}\rt\}.
  \end{equation*}
  Let $\chi(x'')$ be the radial cutoff introduced in the previous proposition and set $\chi_{\tilde r }(x'')=\chi(x''/\tilde r) $, $\s\e^0=\langle\s\e, e_1\wedge\dots\wedge e_k\rangle$ and for any $x'\in B'_{\hat r}$ set 
    \begin{equation*}
g\e(x'):=\int_{B''_{\tilde r}}\chi_{\tilde r}(x'')\di\langle \s\e,e_{I_0}\rangle =\int_{B''_{\tilde r}} \chi_{\tilde r}(x'')\di\s\e^0,
  \end{equation*}
  as in equation~\eqref{eq: localk3}. Up to a smaller choice for $r_0$ we can assume $B_{r}\cap \mathscr{S}=\void$ therefore $\de \s\restr B_{r}=0$, and from equations~\eqref{eq: localk3}~-~\eqref{eq: localk5} it holds
  \begin{equation*}
 \nb'g\e(x')=\frac{1}{\tilde r}\int_{B''_{\tilde r}} Y\lt(\frac{x}{\tilde r}\rt) \di\s\e^{\I_1}.
 \end{equation*}
 For a.e. choice of $\d$ it holds $|\s|(\de B'_{\hat r}\times B''_{\tilde r})=0 $ therefore, for any such choice, $\g\e$ converges in $BV(B_{\hat r})$ to
\begin{equation*}
g(x'):=\int_{B''_{\tilde r}}\chi_{\tilde r}(x'') \di\s^0 \qquad\mbox{and} \qquad\langle D'g,\phi\rangle =\frac{1}{\tilde r}\int_{B'_{\hat r}\times B''_{\tilde r}}\phi(x') \;Y\lt(\frac{x''}{\tilde r}\rt)\,\di\s^{\I_1}.
\end{equation*}
Now we use~\eqref{eq: kstrongineq} to improve the estimates on $\ov g$ and $|D'g|$. Indeed, for $\d$ sufficiently small,  $\tilde r<\hat r /2$ therefore $B_{\tilde r}\subset B'_{\hat r}\times B''_{\tilde r}$ and
\begin{align*}
 \lim_{\eps\dw0} \ov g\e \geq (1-\xi) \frac{1}{|B''_{\hat r}|}\int_{B'_{\hat r}\times B''_{\tilde r}} \chi_{{r_*}}(x')\di|\s|\geq (1-\xi)^2 m.
\end{align*}
and denoting $\Pi:\R^n\to\R^{n-k}$, $x\mapsto x''$ we have
\[
|\Pi\s|(C_{r})\leq (1+\xi) \sqrt{3\xi}\;|B'_{\hat r}|\; m \quad\mbox{ and }\quad |D'g|(B'_{\hat r})\leq \frac{C\,|B'_{\hat r}|\,\sqrt{\xi}\,m}{\tilde r}.
\]
Choose $r$ sufficiently small then by Poincar\'e - Wirtinger inequality there exists a set $A$ of almost full measure in $B_{\hat r}$ such that $g\e(x')\geq (1-\xi)^2\,m$, and following the proof of the previous lemma (Step 3) up to equation~\eqref{ineq: kfinitelenght} we get
\begin{equation*}
\liminf_{\eps\dw0} \F^k_{\eps,a}(\s\e,u\e;B_{r})\geq \liminf_{\eps\dw 0}f^{n-k}_{\eps,a}\lt( (1-\xi)^2\,m,r,\tilde r\rt)|A|.
\end{equation*}
Since $\xi$ and $\d$ are arbitrary and $|A|$ can be chosen arbitrary close to $|B_{\hat r}|$ applying Proposition~\ref{prop: structurefm}  with $d=n-k$ to  the latter we conclude
\begin{equation*}
 \liminf_{\eps\dw0} \F^k_{\eps,a}(\s\e,u\e;B_{r})\geq f^{n-k}_a\lt(m\rt)\om_{k}r^k.
\end{equation*}
\end{proof} 

\subsection{$\Gamma$-limsup inequality}\label{subSection: klimsup}
For the lim-sup inequality, we start by approximating $\s$ with a polyhedral current: given $\delta>0$, there exists a $k$ polyhedral current $\tilde\s$ satisfying $\pt\tilde\s=\de\s_0$ and with $\mathbb{F}(\tilde \sigma-\sigma)<\delta$ and $\F_a(\tilde \sigma)<\F_a(\sigma)+\eps$. This result of independent interest is established in~\cite{RepresentationCFM}. A similar result has been proved recently by Colombo et al. in~\cite[Prop. 2.6]{CoDeMaSt17} (see also~\cite[Section~6]{White1}). The authors build an approximation of a $k$-rectifiable current in flat norm and in energy but their construction creates new boundaries and can not ensure the condition $\pt \sigma=\pt \sigma_0$. 

\begin{proof}[Proof of item 2) of Theorem~\ref{teo:kGammalimit}:] ~\\
By~\cite[Theorem 1.1 and Remark 1.6]{RepresentationCFM} we can assume that $\s$ is a polyhedral current. We show how to produce the approximating $(\s\e,u\e)$ for $\s$ supported on a single $k-$dimensional simplex $Q$. We assume with no loss of generality that $Q\subset \R^k$, and that $\s$ writes as
\begin{equation*}
  m\;\H^{k}\restr_{Q} \wedge (e_1\wedge \dots \wedge e_k).  
\end{equation*}
For $\d>0$ fixed, we consider the optimal profiles
\begin{equation*}
\ov u\e(t):=
\begin{dcases}
\eta,&\mbox{ for } 0\leq t\leq  r_*\eps,\\
v_\d\lt(\frac{t}{\eps}\rt),&\mbox{ for } r_*\eps\leq  t\leq r,\\
1&\mbox{ for } r\leq  t,
\end{dcases}
\qquad\mbox{and}\qquad\vartheta\e=\frac{m\;\chi_{B''_{r_*\eps}}(x'')}{\;\om_{n-k}\;   (\eps r_*)^{n-k}}
\end{equation*}
with $r_*$ and $ v_\d$, defined in Proposition~\ref{prop: flimsup} for the choice $d=n-k$. We denote $\de Q$ the relative boundary of $Q$ and given a set $S$ we write $d(x,S)$ for the distance function from $S$. Recall that we use the notation $S_t$ for the $t$-enlargement of the set $S$ and $S'$ to denote its projection into $\R^k$. We first assume, as did for the case $k=1$, $r_*\geq1$, and introduce $\zeta\e$ a $0$-form depending on the first $k$ variables $x'$, satisfying 
  \begin{gather*}
 \zeta\e(x')=1,\qquad \mbox{ for } x'\in (\de Q)'_{  r_* \eps}:=\lt\{x\in \Om\;:\;d(x',\de Q)\leq r_* \eps\rt\},\\
 \zeta\e(x')=0,\qquad \mbox{ for } x'\in \Om \sm (\de Q)'_{2  r_* \eps},\\
 \lt|\di\zeta\e\rt|\leq \frac{1}{ r_* \eps}.
\end{gather*} 
Then we proceed by steps, first set $\ov \s^1\e:=(|\s|*\rho\e)$
  \begin{equation*}
   \s\e^1=\ov \s^1\e e_1\wedge \dots\wedge e_k\qquad \mbox{and}\qquad \s\e^2(x',x'')=\vartheta\e(|x''|)\, \wedge(e_1\wedge\dots\wedge e_k).
  \end{equation*}
  and observe that $\supp(\s\e^1)\cup\supp(\s\e^2)\subset Q_{r_*\eps}$, both $\s\e^1$ and $\s\e^2$ are radial in $x''$ and with a small abuse of notation we denote $\ov \s^1\e(x',s)=\ov \s^1\e(x',|x''|)$, finally for any $x'$
  \[
  \int_{\{x'\}\times B''_{r_*\eps}}[\ov\s^1\e(x',|x''|)-\vartheta\e(|x''|)]\dx''=0.
  \]
  Now we take advantage of $\zeta\e$ in order to interpolate between $\s^1\e$ and $\s^2\e$, note that such interpolation may affect the boundary of the new current therefore we first introduce $\s\e^3$ which corrects this defect. In particular set
  \begin{equation*}
 \s\e^3(x',x'')=-\sum_{i=k+1}^n\lt[\frac{x_i}{|x''|^{n-k}}\;\int_{0}^{|x''|}s^{n-k-1}\lt[\ov \s^1\e(x',s) \vartheta\e(s)\rt]\restr \di \zeta\e\di s\rt]\wedge e_i,
\end{equation*}
  and
  \begin{equation*}
 \s\e=\s\e^1\restr \zeta\e+\s\e^2\restr(1-\zeta\e)+\s\e^3.
\end{equation*}
  With this choice by a calculation similar to equation~\eqref{divesigma3} it holds 
  \begin{gather*}
  \de\s\e= -\de \s*\rho\e\restr \zeta\e-\s^1\e\restr \di \zeta\e-\underbrace{\de \s\e^2\restr(1-\zeta\e)}_{=0}+\s^2\e\restr \di \zeta\e+\de  \s^3\e=(\de \s)*\rho\e.
\end{gather*}
  On the other hand the phase-field is simply defined as $u\e(x)=\ov u\e(d(x,Q))$. In the case $r_*<1$ we need to modify the construction. For $\s\e$ it is sufficient to replace every occurrence of $\zeta\e$ with $\tilde \zeta\e$, which satisfies
   \begin{gather*}
 \tilde \zeta\e(x')=1,\qquad \mbox{ for } x'\in (\de Q)'_{\eps}:=\lt\{x\in \Om\;:\;d(x',\de Q)\leq \eps\rt\},\\
 \tilde\zeta\e(x')=0,\qquad \mbox{ for } x'\in \Om \sm (\de Q)'_{2  \eps},\\
 \lt|\di\tilde \zeta\e\rt|\leq \frac{1}{\eps}.
\end{gather*} 
Now let 
 \[
 w\e(t):=
 \begin{dcases}
  \qquad\;\eta,&\mbox{ for }  t\leq \sqrt{3} \eps,\\
  \frac{1-\eta}{r-\sqrt{3}}(t-\sqrt{3})+\eta,&\mbox{ for } \sqrt{3}\eps\leq  t\leq r.
\end{dcases}
 \]
 and set 
 \[
 u\e=\min\{\ov u\e(d(x,Q)), w\e(d(x,\de Q))\}.
 \]
 \begin{remark} Given a polyhedral current $\s$ such that $\de\s=\de\s_0$ we perform our construction on each simplex and define $\s\e$ as the sum of these elements. The linearity of the boundary operator grants that $\de\s\e=\de\s_0*\rho\e$. The phase field is chosen as the pointwise minimum of the local phase fields. Finally the estimation for the $\Gamma$-limsup inequality is achieved in the same manner as Theorem~\ref{teo: 1Gammalimsup}.
\end{remark}
\end{proof}

\section{Discussion about the results}\label{sec: discussion}
By Lemma~\ref{lemmaA:pointwiseconvergence} for any fixed $d = n-k$ the cost function $f^d_a$ pointwise converges as $a\dw0$ to the function
\[
f(m)=\begin{dcases}
          \kappa,&\mbox{ for }m>0,\\
          0,\mbox{ if }m=0,
         \end{dcases}
\]
where $\kappa$ is the constant value obtained in Proposition~\ref{prop: structurefm} and depends on $d$.
This condition is sufficient to prove that the family of functionals $\F^k_a$, parametrized in $a$,  $\Gamma$-converges to the functional 
\[
\F^k(\s;\Om):=\begin{dcases}
             \kappa\;\H^k(\Sigma\cap \Om),&\mbox{ for }\s=m\,\nu\,\H^k\restr\Sigma,\\             +\oo,&\mbox{otherwise}.
            \end{dcases}
\]
As a matter of fact for any sequence $\s_a\rwstar\s$ in $\D_k(\Om)$ it holds
\[
\liminf_{a\dw0} \F^k_a(\s;\Om)\geq \F^k(\s;\Om)
\]
since $f^d_a(m)\geq \kappa$. On the other hand setting $\s_a:=\s$ we construct a recovery sequence for any $\s$ and obtain the $\Gamma$-limsup inequality
\[
\limsup_{a\dw0}\F^k_a(\s_a;\Om)=\limsup_{a\dw0}\F^k_a(\s;\Om)=\F^k(\s;\Om).
\]
This allows to interpret our result as an approximation of the Plateau problem in any dimension and co-dimension.

\appendix
\section{Reduced problem in dimension $n-k$}\label{appendix: Reduced Problem}
\subsection{Auxiliary problem}\label{subSection: auxiliaryproblem}
In this appendix we show the results previously enunciated in Subsection~\ref{subSection: funcOnCurr}, with the notation introduced therein let us define the auxiliary set 
\begin{equation*}
\ov Y_{\eps,a}(m,r)=\lt\{(\vartheta,u)\in L^2(B_r)\times W^{1,p}(B_r,[\eta,1])\,:\, \|\vartheta\|_1= m\mbox{ and } u_{|\de B_r}\equiv1\rt\},
\end{equation*} 
and the associated minimization problem
 \begin{equation}\label{def:min2}
  \ov f^d_{\eps,a}(m,r)=\inf_{ \ov Y_{\eps,a}(m,r)}E_{\eps,a}(\vartheta,u;B_r).
 \end{equation}
 First we show that both $f^d_{\eps,a}(m,r,\tilde r)$ and $\ov f^d_{\eps,a}(m,r)$ are bounded by the same constant as $\eps\dw0$ and that the value of the second term is achieved by a radially symmetric couple of $\ov Y_{\eps,a}(m,r)$. These two facts are then used to show that for each $m$ the limit values of $\ov f^d_{\eps,a}(m,r)$ and $f^d_{\eps,a}(m,r,\tilde r)$ as $\eps\dw 0$ are equal and independent of the choices $(r,\tilde r)$ to the extent that $0<\tilde r<r$. Let us start by showing the first two properties.
 \begin{lemma}\label{lemmaA: radial symmetry}
For each $\eps$, $m>0$ and $r>0$
\begin{enumerate}[a)]
 \item there exists a constant $C=C(m)\leq C_0\sqrt{1+m^2}$ such that 
\begin{equation}\label{eq: upboundd}
 f^d_{\eps,a}(m,r,\tilde r)<C\qquad\mbox{ and }\qquad \ov f^d_{\eps,a}(m,r)<C.
\end{equation}
\item Both the problem defined in equation~\eqref{def:min1} and equation~\eqref{def:min2} admit a minimizer. Moreover among the minimizers of $E_{\eps,a}$ in $\ov Y_{\eps,a}(m,r)$ it is possible to choose a radially symmetric couple $(\vartheta\e,u\e)$ such that $u\e$ is radially non-decreasing and $\vartheta\e$ is radially non-increasing.
\end{enumerate}
\end{lemma}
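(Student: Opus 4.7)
The plan is to treat the two assertions separately: for (a) I would exhibit an explicit admissible test pair and compute its energy, while for (b) I would combine a direct-method existence proof with a Schwarz symmetrization argument.

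For part (a), I would use the two-scale ansatz underlying the definition of $f^d_a(m)$. Given $m>0$, set $s:=(m/\om_d)^{1/d}$, define
\begin{equation*}
\vartheta\e:=\frac{m}{\om_d(s\eps)^d}\,\mathbf{1}_{B_{s\eps}},
\end{equation*}
and let $u\e$ be a radial function equal to $\eta$ on $B_{s\eps}$, affine from $\eta$ to $1$ on the annulus $B_{(s+1)\eps}\setminus B_{s\eps}$, and equal to $1$ on $B_r\setminus B_{(s+1)\eps}$. A direct computation gives
\begin{equation*}
\int\eps^{p-d}|\nb u\e|^p\leq C(s+1)^{d-1},\quad\int\frac{(1-u\e)^2}{\eps^d}\leq C\bigl(s^d+(s+1)^{d-1}\bigr),\quad\int\frac{u\e|\vartheta\e|^2}{\eps}=\frac{a\,m^2}{\om_d s^d}.
\end{equation*}
Summing and using $\om_d s^d=m$, the total energy is bounded by $C_0(1+m)\leq C_0\sqrt{2}\,\sqrt{1+m^2}$, with $C_0$ depending only on $a,d,p$. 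The pair is admissible both for $\ov Y_{\eps,a}(m,r)$ and for $Y_{\eps,a}(m,r,\tilde r)$ as soon as $(s+1)\eps<r$ and, in the second case, $s\eps<\tilde r$; otherwise the same ansatz at a scale adapted to $r$ (or $\tilde r$) would yield the bound in the remaining regime.

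For part (b), existence follows by the direct method. Given a minimizing sequence $(\vartheta_n,u_n)$, the energy bound together with $u_n\geq\eta>0$ forces $u_n$ to be bounded in $W^{1,p}(B_r,[\eta,1])$ and $\vartheta_n$ to be bounded in $L^2(B_r)$. Since $p>d$, Morrey's embedding provides, along a subsequence, $u_n\to u$ uniformly on $\ov B_r$ and $\vartheta_n\rightharpoonup\vartheta$ weakly in $L^2$. The constraints pass to the limit: the pointwise bounds $\eta\leq u\leq 1$ and, in the auxiliary problem, the boundary condition $u|_{\de B_r}=1$ come from uniform convergence; the support constraint $\supp\vartheta\subset B_{\tilde r}$ is inherited from weak closedness of the subspace of $L^2$ functions vanishing on $B_r\setminus B_{\tilde r}$; the mass $\|\vartheta\|_1=m$ follows from testing the weak $L^2$ convergence of the non-negative $\vartheta_n$'s against $\mathbf{1}_{B_r}$. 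For lower semicontinuity, the first two terms are standard; for the cross term I would split
\begin{equation*}
\int\frac{u_n|\vartheta_n|^2}{\eps}=\int\frac{u|\vartheta_n|^2}{\eps}+\int\frac{(u_n-u)|\vartheta_n|^2}{\eps},
\end{equation*}
the second piece tending to $0$ because $\|u_n-u\|_\infty\to 0$ while $\|\vartheta_n\|_{L^2}^2$ is bounded, and the first being weakly lsc in $\vartheta_n$ by convexity of $\vartheta\mapsto\int u|\vartheta|^2$ for fixed $u\geq 0$.

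For the radial symmetry claim, which concerns only $\ov Y_{\eps,a}(m,r)$, I would symmetrize a minimizer $(\vartheta,u)$ as follows. Let $(1-u)^*$ and $\vartheta^*$ denote the Schwarz (decreasing) rearrangements on $B_r$, and set $u^\sharp:=1-(1-u)^*$ (radially non-decreasing). The null trace of $1-u$ on $\de B_r$ is inherited by $(1-u)^*$, so $u^\sharp=1$ on $\de B_r$; the bounds $u^\sharp\in[\eta,1]$ and the mass $\|\vartheta^*\|_1=m$ follow by equidistribution. The three energy terms then behave as $\int(1-u^\sharp)^2=\int(1-u)^2$ (equidistribution), $\int|\nb u^\sharp|^p\leq\int|\nb u|^p$ (Polya-Szego), and by the Hardy-Littlewood inequality applied to the non-negative functions $1-u$ and $|\vartheta|^2$,
\begin{equation*}
\int u|\vartheta|^2=\int|\vartheta|^2-\int(1-u)|\vartheta|^2\geq\int(\vartheta^*)^2-\int(1-u)^*(\vartheta^*)^2=\int u^\sharp(\vartheta^*)^2.
\end{equation*}
Hence $(\vartheta^*,u^\sharp)$ is still a minimizer, with the asserted monotonicity. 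The main obstacle across the whole argument is the weak lower semicontinuity of the cross term $\int u_n|\vartheta_n|^2$, which a priori involves a product of weakly convergent sequences; the Morrey embedding $W^{1,p}\hookrightarrow C^0$ (valid because $p>d$) is precisely the ingredient that resolves it.
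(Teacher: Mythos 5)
Your proposal is correct and follows essentially the same route as the paper: an explicit concentrated test pair for part (a) (the paper takes inner radius $(\sqrt{a}\,m)^{1/d}\eps$ rather than $(m/\om_d)^{1/d}\eps$, but both give a bound of the form $C_0\sqrt{1+m^2}$), and for part (b) the direct method followed by rearrangement with the P\'olya--Szeg\H{o} and Hardy--Littlewood inequalities, exactly as in the paper. The one substantive deviation is the lower semicontinuity of the cross term $\int u_n|\vartheta_n|^2$: the paper invokes Ioffe's theorem, whereas you use the Morrey embedding (valid since $p>d$) to get uniform convergence of $u_n$ and then split the integral; both are sound, yours being more elementary and self-contained, and your explicit writing of the Hardy--Littlewood step via $1-u$ (together with the observation that one may take $\vartheta_n\geq 0$ so the mass constraint passes to the weak limit) fills in details the paper leaves implicit.
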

\begin{proof}
\begin{enumerate}[a)]
 \item To show the bound it is sufficient to define
\begin{equation*}
 u\e(x)\,:=\, \begin{cases}
\quad \eta& \mbox{ if } |x|< r_1\eps,\\
\eta+\dfrac{1-\eta}{(r_2- r_1)\eps}(|x|- r_1\eps) & \mbox{ if }  r_1\eps\leq |x|< r_2\eps,\\
\quad1 & \mbox{ if }   r_2\eps\leq |x|<r,
\end{cases}
\qquad \vartheta\e(x)\,:=\, 
 \begin{cases}
\dfrac m{|B_{r_1\eps}|}& \mbox{ if } |x|<r_1\eps,\\
\quad 0 & \mbox{ if } r_1\eps\leq |x|< r.
\end{cases}
\end{equation*}
Evaluating the energy we get, for any choice of $r_1<r_2<r$,
\begin{equation*}
 E_{\eps,a}(u\e,\vartheta\e)\leq \frac{a\, m^2}{\;\om_{d}\;\, r_1^{d}}+\;\om_{d}\;\lt[ r_1^{d}+\frac{1}{(r_2-r_1)^2}\lt(\frac{r_2^{d}-r_1^{d}}{d}\,r_2^2-\frac{r_2^{d+1}-r_1^{d+1}}{d+1}\,2\,r_2+\frac{r_2^{d+2}-r_1^{d+2}}{d+2}\rt)\rt].
\end{equation*}
As soon as $r_1\eps<\tilde r$, we have $(\vartheta\e,u\e)\in Y_{\eps,a}(m,r,\tilde r) \cap \ov Y_{\eps,a}(m,r)$. Choosing $r_1=(\sqrt{a}m)^{1/d}$ and $r_2=(1+\sqrt{a}m)^{1/d}$, we get
\begin{equation*}
\max\{f^d_{\eps,a}(m,r,\tilde r),\ov f^d_{\eps,a}(m,r)\}\leq C_0\sqrt{1+m^2}.
\end{equation*}
\item To show the existence of minimizers for both minimization problems we use the direct method of the Calculus of Variation. The lower semicontinuity of the integral with integrand $u|\vartheta|^2$ is ensured by Ioffe's theorem~\cite[theorem 5.8]{Am_Fu_Pal}.
Now given any minimizing couple $(\hat \vartheta\e,\hat u\e)\in \ov Y_{\eps,a}(m,r)$, let $\vartheta\e$ be the decreasing Steiner rearrangement of $\hat \vartheta\e$ and $u\e$ the increasing rearrangement of $\hat u\e$. Indeed, since $\hat u\e$ has range in $[\eta,1]$, we still have $u\e~_{|\de B_r}\equiv1$. Polya's Szego and Hardy-Littlewood's inequalities ensure 
\[E_{\eps,a}(\vartheta\e,u\e)\leq E_{\eps,a}(\hat \vartheta\e,\hat u\e)\]
\end{enumerate}
\end{proof}
Let us prove the asymptotic equivalence of the values $f^d_{\eps,a}(m,r,\tilde r)$ and $\ov f^d_{\eps,a}(m,r)$ as $\eps\dw0$. 
\begin{lemma}[Equivalence of the two problems]\label{lemmaA: equivalenceE}
 For any $\tilde r<r$ and $m>0$ it holds
 \begin{equation*}
  |f^d_{\eps,a}(m,r,\tilde r)- \ov f^d_{\eps,a}(m,r)|\,\stackrel{\eps\dw0}\longto\, 0
 \end{equation*}
\end{lemma}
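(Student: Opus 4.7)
The easy direction is the inclusion $Y_{\eps,a}(m,r,\tilde r) \subset \ov Y_{\eps,a}(m,r)$, which gives immediately $\ov f^d_{\eps,a}(m,r) \leq f^d_{\eps,a}(m,r,\tilde r)$. For the opposite inequality, my plan is to take a radially symmetric minimizer $(\hat\vartheta\e,\hat u\e)$ of $\ov f^d_{\eps,a}(m,r)$ (which exists by Lemma~\ref{lemmaA: radial symmetry}(b), with $\hat u\e$ radially non-decreasing and $\hat\vartheta\e$ radially non-increasing), show that almost all the mass of $\hat\vartheta\e$ is concentrated in a small ball $B_{R\e}$ with $R\e = O(\eps)$, and then truncate and renormalize to produce an admissible competitor for $f^d_{\eps,a}(m,r,\tilde r)$.

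The core step is the estimate $R\e \leq C\eps$. From $E_{\eps,a}(\hat\vartheta\e,\hat u\e) \leq C_0 \sqrt{1+m^2}$ (Lemma~\ref{lemmaA: radial symmetry}(a)) one extracts
\[
\int_{B_r} (1-\hat u\e)^2 \,\dx \leq C\,\eps^d,
\qquad
\int_{B_r} \hat u\e |\hat\vartheta\e|^2 \,\dx \leq C\,\eps.
\]
The first bound forces $|\{\hat u\e < 1/2\}| \leq 4C\eps^d$; by radial monotonicity this level set is itself a ball $B_{R\e}$, whence $\omega_d R\e^d \leq 4C\eps^d$ and $R\e \leq C'\eps$. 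Cauchy--Schwarz on $B_r \setminus B_{R\e}$, where $\hat u\e \geq 1/2$, then bounds the outer mass
\[
m_2\e \;:=\; \int_{B_r\setminus B_{R\e}} \hat\vartheta\e\,\dx \;\leq\; |B_r|^{1/2}\bigl(2C\eps\bigr)^{1/2} \;=\; O(\eps^{1/2}),
\]
so $m_1\e := m - m_2\e \to m$.

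Now I set $\vartheta\e := (m/m_1\e)\,\hat\vartheta\e\,\mathbf 1_{B_{R\e}}$ and $u\e := \hat u\e$. For $\eps$ small enough, $R\e < \tilde r$, so $(\vartheta\e,u\e) \in Y_{\eps,a}(m,r,\tilde r)$. The first two integrals in $E_{\eps,a}$ are unchanged, and the third satisfies
\[
\int_{B_r} \frac{u\e |\vartheta\e|^2}{\eps}\,\dx
\;=\; \Bigl(\frac{m}{m_1\e}\Bigr)^{\!2}\!\int_{B_{R\e}} \frac{\hat u\e |\hat\vartheta\e|^2}{\eps}\,\dx
\;\leq\; \Bigl(\frac{m}{m_1\e}\Bigr)^{\!2}\!\int_{B_r} \frac{\hat u\e |\hat\vartheta\e|^2}{\eps}\,\dx.
\]
Combining, and using $m/m_1\e \to 1$ with $\ov f^d_{\eps,a}(m,r) \leq C$,
\[
f^d_{\eps,a}(m,r,\tilde r) \;\leq\; E_{\eps,a}(\vartheta\e,u\e) \;\leq\; \Bigl(\frac{m}{m_1\e}\Bigr)^{\!2} \ov f^d_{\eps,a}(m,r) \;=\; \ov f^d_{\eps,a}(m,r) + o(1),
\]
which, together with the trivial inequality, gives the stated asymptotic equality.

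The main subtlety lies in passing from the purely energetic bounds on $\hat u\e$ to the geometric statement $R\e \leq C\eps$: one needs both the $L^2$ control of $1-\hat u\e$ and the monotonicity afforded by the Steiner-type rearrangement in Lemma~\ref{lemmaA: radial symmetry}(b), otherwise the level set $\{\hat u\e < 1/2\}$ need not be a centered ball and the mass could in principle leak far from the origin. Everything else reduces to Cauchy--Schwarz and a bookkeeping of the quadratic factor $(m/m_1\e)^2$.
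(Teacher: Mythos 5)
Your construction for the inequality $f^d_{\eps,a}(m,r,\tilde r)\leq \ov f^d_{\eps,a}(m,r)+o(1)$ is sound and is essentially the paper's own Step~1: take the radial monotone minimizer of the problem with boundary condition, show that the bad set $\{\hat u\e<1/2\}$ is a centered ball of radius $O(\eps)$, bound the outer mass by Cauchy--Schwarz, truncate and renormalize $\hat\vartheta\e$. That part is fine.

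The gap is in what you call the ``easy direction.'' The inclusion $Y_{\eps,a}(m,r,\tilde r)\subset \ov Y_{\eps,a}(m,r)$ is false: the two admissible classes impose \emph{different} constraints, neither containing the other. A pair in $Y_{\eps,a}(m,r,\tilde r)$ must have $\supp(\vartheta)\subset B_{\tilde r}$ but its phase field $u$ is \emph{not} required to equal $1$ on $\de B_r$, whereas $\ov Y_{\eps,a}(m,r)$ requires $u_{|\de B_r}\equiv 1$ but places no support constraint on $\vartheta$. So the inequality $\ov f^d_{\eps,a}(m,r)\leq f^d_{\eps,a}(m,r,\tilde r)$ is not trivial, and your proof as written only establishes one of the two bounds needed for the absolute value in the statement. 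To repair it you must take a minimizer $(\vartheta\e,u\e)$ of $f^d_{\eps,a}(m,r,\tilde r)$ and force the boundary condition, e.g.\ set $v\e=\chi u\e+(1-\chi)$ with $\chi$ a cutoff equal to $1$ on $B_{\tilde r}$ and $0$ outside $B_{(r+\tilde r)/2}$. Since $v\e\geq u\e$ and $v\e=u\e$ on $\supp(\vartheta\e)$, the terms $\int(1-v\e)^2/\eps^d$ and $\int v\e|\vartheta\e|^2/\eps$ do not increase; the only cost is in the gradient term, where $|\nb v\e|^p\leq(|\nb u\e|+|\nb\chi|)^p$ must be expanded and controlled via H\"older, yielding an error $O(\eps^{(p-d)/p})\to 0$ thanks to $p>d$. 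This is exactly the paper's Step~2, and without it the lemma is only half proved.
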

\begin{proof}
\textit{Step 1: [$f^d_{\eps,a}(m,r,\tilde r)\leq \ov f^d_{\eps,a}(m,r)+O(1)$]}~\\
Consider for each $\eps$ the radially symmetric and monotone couple $(\vartheta\e,u\e)\in \ov Y_{\eps,a}(m,r)$ as introduced in the previous lemma. Take $\xi \in(\eta,1)$ and let us set 
\begin{equation}\label{eq: rx}
 r\x:=\sup\{t\in (0,r):u\e(t)\leq \xi\}\quad\mbox{ with $r\x=0$ if the set is empty}.
\end{equation}
By Cauchy-Schwartz inequality it holds 
\begin{equation*}
 C\geq\frac{\int_{B_{r}\sm B_{r\x}}u\e|\vartheta\e|^2\dx}{\eps}\geq\xi\; \frac{\lt(\int_{B_{r}\sm B_{r\x}}|\vartheta\e|\dx\rt)^2}{\;\om_{d}\; r^{d} \eps}.
\end{equation*}
Let us define $\Delta\x:= \int_{B_{r}\sm B_{r\x}}|\vartheta\e|$, the latter ensures that $\Delta\x\in o(\eps^{\a/2})$. Let us now set $\hat \vartheta\e =\lt(\frac{m\vartheta\e}{\int_{B_{r\x}}\vartheta\e}\rt)\ind_{B_{r\x}}$ which is not null for $\eps$ small. We have $(\hat\vartheta\e,u\e)\in Y_{\eps,a}(m,r,\tilde r)$ if and only if $r\x\leq\tilde r$. Indeed, this holds as 
\begin{equation}\label{eq: rxoeps}
 C\geq\int_{B'_{r\x}}\frac{(1-u\e)^2}{\eps^{d}}\dx\geq \;\om_{d}\;(1-\xi)^2\,\lt(\frac{r\x}{\eps}\rt)^{d},
\end{equation}
which ensures that $r\x= O(\eps)$. Finally let us evaluate the energy
\begin{align*}
E_{\eps,a}(\hat\vartheta\e,u\e)&=\int_{B_r}\lt[\eps^{p-d}|\nb u\e|^p+\frac{(1-u\e)^2}{\eps^{d}}+\frac{u\e|\hat\vartheta\e|^2}{\eps}\rt]\dx\\
&= \int_{B_r}\lt[\eps^{p-d}|\nb u\e|^p+\frac{(1-u\e)^2}{\eps^{d}}\rt]\dx+\int_{B_{r\x}}\frac{u\e\,m^2\,|\vartheta\e|^2}{\eps(\int_{B_{r\x}}\vartheta\e)^2}\dx\\
&\leq\;  \frac{m^2\;\om_{d}\;}{\lt(\int_{B_{r\x}}\vartheta\e\rt)^2}E_{\eps,a}(\vartheta\e,u\e)=[1+O(1)]E_{\eps,a}(\vartheta\e,u\e).
\end{align*}
Passing to the infimum we get
\begin{equation}\label{eq: equivalence1}
 f^d_{\eps,a}(m,r,\tilde r)\leq \ov f^d_{\eps,a}(m,r)+O(1).
\end{equation} 

\medskip
\textit{Step 2: [$\ov f^d_{\eps,a}(m,r)\leq f^d_{\eps,a}(m,r,\tilde r)+O(1)$]}~\\
Consider a minimizing couple $(\vartheta\e,u\e)$ such that
\begin{equation*}
 f^d_{\eps,a}(m,r,\tilde r)= E_{\eps,a}(\vartheta\e,u\e).
\end{equation*}
Let $\chi$ be a smooth cutoff function such that $\chi(x)=1$ if $|x|\leq \tilde r$ and $\chi(x)=0$ if $|x|>\frac{r+\tilde r}{2}$ and set $v\e=\chi u\e+(1-\chi)$. By construction $(\vartheta\e,v\e)\in \ov Y_{\eps,a}(m,r)$, furthermore, since $u\e\in (0,1]$, it holds that $u\e\leq v\e$ and $(1-u\e)^2\geq(1-v\e)^2$.  Moreover as $v\e\equiv u\e$ on $B_{\tilde r}$ we have $\int_{B_r} u\e|\vartheta\e|^2\dx=\int_{B_r} v\e|\vartheta\e|^2\dx$. Eventually, we estimate the gradient component of the energy as follows
 \begin{align*}
\int_{B_r}\eps^{p-d}|\nb v\e|^p\dx&=\int_{B_r}\eps^{p-d}|\chi\nb u\e+(u\e-1)\nb \chi|^p\dx\\
&\leq\int_{B_r}\eps^{p-d}(|\nb u\e|+|\nb \chi|)^p\dx\\
&\leq \int_{B_r}\eps^{p-d} |\nb u\e|^p\dx+C(r,\chi)\lt(E_{\eps,a}^{1-1/p}(\vartheta\e,v\e)\eps^\frac{p-d}{p}+\eps^{p-d}\rt)
\end{align*}
where we have used the inequality $(|a|+|b|)^p\leq |a|^p+C_p (|a|^{p-1}|b|+|b|^p)$ and Holder inequality. We get
\begin{equation}\label{eq: equivalence2}
\ov f^d_{\eps,a}(m,r)\leq E_{\eps,a}(\vartheta\e,v\e)\leq E_{\eps,a}(\vartheta\e,u\e)+O(\eps^{\frac{p-d}{p}})=f\e^{\tilde r}(m,r)+O(1)
\end{equation}

\medskip
\textit{Step 3: }
Combining inequalities~\eqref{eq: equivalence1} and \eqref{eq: equivalence2} we obtain $  f^d_{\eps,a}(m,r,\tilde r)- \ov f^d_{\eps,a}(m,r)= o(1).$
\end{proof}

\subsection{Study of the transition energy}\label{subSection: transitionenergy}
Given two values $r_1<r_2$ let us introduce the functional 
\begin{equation*}
\G(v;(r_1,r_2)):=\int_{r_1}^{r_2}t^{d-1}\lt[|v'|^p+(1-v)^2\rt]
\end{equation*}
and for any triplet $(\xi,\;r_1,\;r_2)\in [0,1]\times \R^+\times\R^+$ we set 
\begin{equation}\label{def: Gn}
 q^d(\xi,r_1,r_2):=\inf\lt\{\G(v;(r_1,r_2))\dt\;:\;v\in W^{1,p}(r_1,r_2),\;v(r_1)=\xi\mbox{ and } v(r_2)=1\rt\}.
\end{equation}
This value represents the cost of the transition from $\xi$ to $1$ in the ring $B_{r_2}\sm \ov B_{r_1}$. We will say that a function $v$ is admissible for the triplet $(\xi,r_1,r_2)$ if it is a competitor in the above minimization problem. Let us investigate the properties of the function introduced.
\begin{lemma}\label{lemmaA: propG}
For any fixed triplet $(\xi,r_1,r_2)\in [0,1]\times \R_+\times\R_+$ the infimum in equation~\eqref{def: Gn} is a minimum. Moreover there is a unique function achieving the minimum which is nondecreasing with range in the interval $[\xi,1]$. Finally the function $g$ satisfies the following properties
 \begin{enumerate}
   \item $r_2\mapsto q^d(\xi,r_1,r_2)$ is nonincreasing,
   \item $r_1\mapsto q^d(\xi,r_1,r_2)$ is nondecreasing,
  \item $\xi\mapsto q^d(\xi,r_1,r_2)$ is nonincreasing, and $g(1,r_1,r_2)=0$.
 \end{enumerate}
 Recalling the definition~\eqref{eq: ginfinito} of $q^d_{\oo}$, we have $q^d_{\oo}(\xi,\hat r)=q^d(\xi,r_1,\oo)$, and $q^d_{\oo}(0,0)>0$. Furthermore for any $r>0$ the map $\xi\mapsto q^d_{\oo}(\xi,r)$ is convex and continuous on $(0,+\oo)$.
\end{lemma}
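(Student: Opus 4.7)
The plan is to proceed in four steps: existence, uniqueness, and the pointwise bounds $\xi\le v\le 1$ via the direct method followed by a truncation; monotonicity of $v$ via the Euler--Lagrange equation; the three monotonicity statements for $q^d$ by elementary competitor manipulations; and finally the convexity, continuity, and strict positivity properties of $q^d_\infty$.

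For existence and uniqueness, I would apply the direct method to $\mathcal{G}(\cdot;(r_1,r_2))$ on $W^{1,p}(r_1,r_2)$: coercivity and weak lower semicontinuity hold because $p>1$ and $t^{d-1}$ is bounded away from $0$ and $\infty$ on the compact interval, and the admissible set is a nonempty convex affine subspace (it contains the affine interpolation from $\xi$ to $1$). Uniqueness follows from strict convexity of $w\mapsto|w|^p$: two minimizers whose derivatives disagreed on a positive measure set would admit a convex combination with strictly smaller energy, and the Dirichlet data then force equality. To obtain the range $[\xi,1]$, I would test with $\tilde v:=\max(\min(v,1),\xi)$, which is admissible and satisfies $|\tilde v'|\le|v'|$ and $(1-\tilde v)^2\le(1-v)^2$ pointwise; uniqueness then forces $v=\tilde v$.

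Next, for the monotonicity of $v$, I would exploit the weak Euler--Lagrange equation
\[
\tfrac{d}{dt}\bigl(t^{d-1}|v'|^{p-2}v'\bigr)\,=\,\tfrac{2}{p}\,t^{d-1}(v-1)\,\le\,0,
\]
which makes $h(t):=t^{d-1}|v'|^{p-2}v'(t)$ nonincreasing. If $v'(t_0)<0$ at some Lebesgue point, then $h(t_0)<0$, hence $h\le h(t_0)<0$ on $[t_0,r_2]$; since $\mathrm{sgn}\,h=\mathrm{sgn}\,v'$ wherever $v'\neq 0$ and $h<0$ prevents $v$ from being constant on any subinterval, this would force $v$ to decrease strictly on $[t_0,r_2]$, contradicting $v(r_2)=1\ge v(t_0)$. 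The three monotonicity properties of $q^d$ are then obtained by elementary competitor manipulations: for $r_2$, extending by the constant $1$ on $[r_2,r_2']$ preserves the energy and proves monotonicity in $r_2$; for $r_1$, given $v$ admissible on $(r_1',r_2)$, the translate $\tilde v(t):=v(t+r_1'-r_1)$ followed by a constant extension by $1$ up to $r_2$ satisfies $\mathcal{G}(\tilde v)\le\mathcal{G}(v)$ because under the change of variable $s=t+r_1'-r_1$ the weight shifts from $s^{d-1}$ down to $(s-(r_1'-r_1))^{d-1}\le s^{d-1}$; for the $\xi$-monotonicity, the truncation $\max(v,\xi')$ is admissible at level $\xi'>\xi$ and has no larger energy. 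The value $q^d(1,r_1,r_2)=0$ is attained by $v\equiv 1$.

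Finally, for $q^d_\infty$, I would read the identification $q^d_\infty(\xi,\hat r)=q^d(\xi,\hat r,\infty)$ as notational, the right-hand side being the infimum over $v$ with $v(\hat r)=\xi$ and $\lim_\infty v=1$. Convexity of $\xi\mapsto q^d_\infty(\xi,r)$ follows from the pointwise convexity of $\mathcal{G}$ in $v$: the convex combination of admissible functions for $\xi_1$ and $\xi_2$ is admissible for $\lambda\xi_1+(1-\lambda)\xi_2$ with energy at most $\lambda q^d_\infty(\xi_1,r)+(1-\lambda)q^d_\infty(\xi_2,r)$; continuity on $(0,+\infty)$ then comes for free. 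The main obstacle is the strict positivity $q^d_\infty(0,0)>0$, which critically exploits the standing assumption $p>d$. For any admissible $v$, setting $t^*:=\inf\{t>0:v(t)\ge 1/2\}$, the potential term gives $\mathcal{G}(v)\ge\tfrac{1}{4}\!\int_0^{t^*}\!t^{d-1}dt=t^{*d}/(4d)$, while Hölder's inequality with conjugate exponents $(p,p/(p-1))$ applied to the decomposition $v'=\bigl(t^{(d-1)/p}v'\bigr)\cdot t^{-(d-1)/p}$ yields
\[
\tfrac{1}{2}\,=\,\int_0^{t^*}\!v'\,dt\,\le\,\mathcal{G}(v)^{1/p}\!\left(\int_0^{t^*}\!t^{-(d-1)/(p-1)}\,dt\right)^{\!(p-1)/p}\!,
\]
where the last integral converges precisely when $p>d$ and equals $\tfrac{p-1}{p-d}\,t^{*(p-d)/(p-1)}$, giving $\mathcal{G}(v)\gtrsim t^{*-(p-d)}$. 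Since these two lower bounds move in opposite directions in $t^*$, optimizing yields a uniform positive lower bound on $\mathcal{G}(v)$.
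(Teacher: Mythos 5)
Your proof is correct and reaches all the assertions of the lemma, but it diverges from the paper's argument in two sub-steps worth noting. For the monotonicity of the minimizer, the paper replaces $v$ by its running maximum $\ov v(s)=\max\{v(t):r_1\leq t\leq s\}$ and observes that this rearrangement does not increase the energy, whereas you derive monotonicity from the Euler--Lagrange equation, using that $t\mapsto t^{d-1}|v'|^{p-2}v'$ is nonincreasing once $v\leq 1$ is known; both are valid, yours needs the weak EL equation and the continuous representative of the flux, while the paper's is purely variational. For the $r_1$-monotonicity, the paper uses the volume-preserving radial diffeomorphism $\phi(s)=(s^{d}-\Delta)^{1/d}$, which keeps the potential term fixed and increases the gradient term; your translation-plus-extension competitor, which simply exploits that the weight $t^{d-1}$ is increasing, is simpler and equally correct. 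Your lower bound for $q^d_\oo(0,0)$ is the same Hölder argument as the paper's (and in fact your exponent $t^{*-(p-d)}$ is the correct scaling, where the paper writes $c/r$). One caveat: the paper's proof also establishes continuity of $\xi\mapsto q^d_\oo(\xi,r)$ at $\xi=0$ via an explicit reparametrization $h$; this is not literally demanded by the statement (which only claims continuity away from $0$, which indeed follows from convexity plus monotonicity as you say), but it is invoked later in the proof of Proposition~\ref{prop: structurefm} when sending $\xi\dw0$, so if you intend your lemma to support that use you would need to add that endpoint argument.
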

\begin{proof} Let $(\xi,r_1,r_2)\in [0,1]\times \R_+\times\R_+$, the infimum is actually a minimum by means of the direct method of the calculus of variations. Such minimum is absolutely continuous on the interval $(r_1,r_2)$ by Morrey's inequality and is unique since $\G(v;(r_1,r_2))$ is strictly convex in $v$. Let $v\in W^{1,p}(r_1,r_2)$ be a minimizer of~\eqref{def: Gn} set 
  \begin{equation*}
  \ov v=\min\{\max( v,\xi),1\}
 \end{equation*} 
 then $\G(\ov v;(r_1,r_2))\leq\G(v;(r_1,r_2))$ if $v \neq \ov v$. As a consequence for every minimizer of~\eqref{def: Gn} we have $\xi\leq v\leq1$. Similarly setting 
 \[\ov v(s)=\max\{v(t):r_1\leq t\leq s\}\]
 we have  $\G(\ov v;(r_1,r_2))\leq\G(v;(r_1,r_2))$ if  $v \neq \ov v$. Hence $v$ is nondecreasing. 
   \begin{figure}[!ht]
  \centering
   \includegraphics[scale=0.4]{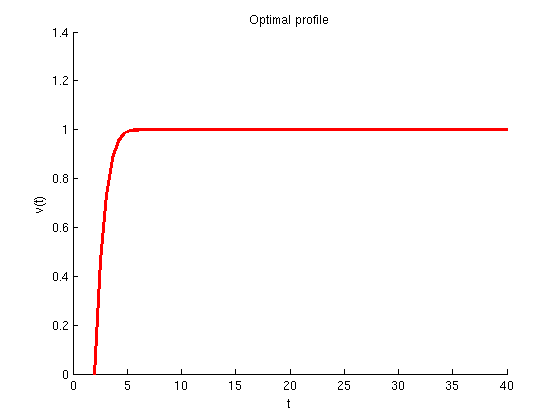}
   \caption{Graph obtained by a numerical optimization of problem~\eqref{def: Gn}, for the choice of the parameters $p=3$, $d=2$, $r_1=2$, $r_2=40$ and $\xi=0$.}\label{figura3}
  \end{figure}
 Let us now study the monotonicity of $g$. To do so let $v$ be the minimizer for $(\xi,r_1,r_2)$:
\begin{enumerate}
\item Let $\ov r_2> r_2$ and let us extend $v$ by $1$ on the interval $(r_2,\ov r_2)$. We have
\begin{equation*}
q^d(\xi,r_1,r_2)=\G(v;(r_1,r_2))=\G(v;(r_1,\ov r_2))\geq q^d(\xi,r_1,\ov r_2).
\end{equation*}
Hence $r_2\mapsto g$ is nonincreasing.
\item Let $0<\ov r_1< r_1$ and set $\Delta = r_1^{d}-\ov r_1^{d}>0$ and $\ov r_2=(r_2^{d}-\Delta)^{\frac{1}{d}}<r_2$. Define the diffeomorphism 
\begin{align}
\phi:(r_1,r_2)&\;\longrightarrow\;\;\;\; (\ov r_1,\ov r_2),\label{diffeomorphism}\\
s\;\;\;\;\;&\;\longmapsto\;\lt[s^{d}-  \Delta\rt]^{1/d}.\nonumber
 \end{align}
 Let $v$ be the minimizer of~\eqref{def: Gn} and $\ov v(s)=v\circ \phi(s)$. Let us remark that $\phi'(s)=s^{d-1}/\phi(s)^{d-1}$, thus it holds
  \begin{align*}
 q^d(\xi,r_1,r_2)&=\int_{r_1}^{r_2} t^{d-1}\lt[|v'|^p+(1-v)^2\rt]\dt= \int_{\ov r_1}^{\ov r_2} \phi(s)^{d-1}\lt[\frac{|\ov v'|^p}{|\phi'(s)|^p}+(1-\ov v)^2\rt]\phi(s)'\di s\\
 &= \int_{\ov r_1}^{\ov r_2} s^{d-1}\lt[\lt(1+\frac{\Delta}{s^{d}-\Delta}\rt)^{\frac{pd}{d}}|\ov v'|^p+(1-\ov v)^2\rt]\di s\geq q^d(\xi,\ov r_1,\ov r_2)\geq q^d(\xi,\ov r_1, r_2).
 \end{align*}
Therefore $r_1\mapsto q^d$ is nondecreasing.

\item Let $0\leq \xi< \ov \xi\leq 1$ and $v$ the absolutely continuous, nondecreasing minimizer of problem $q^d(\xi,r_1,r_2)$. Then there exists $\ov r\in (r_1,r_2)$ for which $v(\ov r)=\ov \xi$. Hence
\[q^d(\xi,r_1,r_2)\geq \G(v;(\ov r,r_2 ))\geq g(\ov \xi,\ov r,r_2)\geq g(\ov \xi, r_1,r_2).\]
Hence, $\xi\mapsto q^d$ is nonincreasing. Finally, for $\xi=1$ consider the constant function $v\equiv 1$ to get $g(1,r_1,r_2)=0$.
\end{enumerate}
Indeed, in view of the monotonicity, for every $r_1$ and $r_2$ we have
\[
g(0,r_1,r_2)\geq g(0,0,+\oo)=q^d_{\oo}(0,0).
\]
Let us show $q^d_{\oo}(0,0)>0$. As a matter of facts, taken the minimizer $v$ for the problem~\eqref{eq: ginfinito}, there exists $r\in(0,+\oo)$ such that $v(r)=1/2$ and we have
\[
q^d_{\oo}(0,0)\geq \int_{0}^{r} t^{d-1}\lt[|v'|^p+(1-v)^2\rt]\dt= \int_{0}^{r} t^{d-1}|v'|^p\dt+\frac{r^{d}}{4\,d}.
\]
A direct evaluation gives 
\[
\min\lt\{\int_{0}^{r} t^{d-1}|v'|^p\dt\,:\,v(r)=0\mbox{ and }v(r)=1/2\rt\}=\frac{c}{r}
\]
and we obtain the estimate
\[
q^d_{\oo}(0,0)\geq\frac{c}{r}+\frac{r^{d}}{4\,d}>0.
\]
Lastly, let us show that for any $r$ the function $q^d_{\oo}(\cdot,r)$  is convex. Consider two values $\xi_1,\,\xi_2\in (0,1)$ and the associated minimizers $v_1,\,v_2$ for the respective energy $q^d_{\oo}(\cdot,r)$. Indeed, for any $\lambda\in (0,1)$ the function $\lambda v_1+(1-\lambda )v_2$ is a competitor for the minimization problem $q^d_{\oo}(\lambda \xi_1+(1-\lambda)\xi_2,r)$, therefore it holds
\begin{align*}
q^d_{\oo}(\lambda \xi_1+(1-\lambda)\xi_2,r)&\leq \int_{r}^{\oo}t^{d-1}\lt[|\lambda v_1-(1-\lambda)v_2|^p+(1-\lambda v_1+(1-\lambda)v_2)^2\rt]\dt\\&\leq \lambda q^d_{\oo}( \xi_1,r)+(1-\lambda)q^d_{\oo}(\xi_2,r).
\end{align*}
Thus $q^d_{\oo}(\cdot,r)$ is continuous in the open interval $(0,1)$. To show the continuity in $0$ let $\xi$ be small and $v=\argmin q^d_{\oo}(\xi,r).$ Set 
\[
h(t):=\begin{dcases}
\frac{1}{1-\sqrt{\xi}}(t-\xi),&t<\sqrt{\xi},\\
t,& t\geq\sqrt{\xi}.
\end{dcases}
\]
and observe that $h\circ v$ is a competitor for the problem $q^d_{\oo}(0,r)$. Then
\begin{align*}
q^d_{\oo}(0,r)&\leq  \int_{r}^{\oo}t^{d-1}\lt[|(h\circ v)'|^p+(1- h\circ v^2\rt]\dt\\
&\leq \frac{1}{(1-\sqrt{\xi})^p}\;q^d_{\oo}(\xi,r)+\int_{r}^\oo t^{d-1}\lt[(1-h\circ v)^2-(1-v)^2\rt]\dt
\end{align*}
Let us estimate the second addend in the latter. By the definition of $f$ we have
\begin{align*}
\int_{r}^\oo t^{d-1}\lt[(1-h\circ v)^2-(1-v)^2\rt]\dt&=\int_{\{v<\sqrt{\xi}\}}t^{d-1}\lt[(1-h\circ v-v)^2(v-h\circ v)^2\rt]\dt\\
&\leq4\xi\;\int_{\{v<\sqrt{\xi}\}}t^{d-1}\dt\\
&\leq \frac{4\xi}{(1-\sqrt{\xi})^2}\;q^d_{\oo}(\xi,r).
\end{align*}
Since $q^d_{\oo}(\cdot, r)$ is monotone we have
\[|q^d_{\oo}(0,r)-q^d_{\oo}(\xi,r)|\leq\max\lt\{\frac{1-(1-\sqrt{\xi})^p}{(1-\sqrt{\xi})^p}\;,\;\frac{4\xi}{(1-\sqrt{\xi})^2}\rt\}\kappa,\]
which shows that $q^d_{\oo}(\cdot,r)$ is continuous in $0$.
\end{proof}

\subsection{Proof of Proposition~\ref{prop: structurefm}}\label{subSection: prop1}
We show that 
\begin{equation*}
\liminf_{\eps\dw0}\ov f^d_{\eps,a}(m,r)\geq f^d_a(m)
\end{equation*}
then equation~\eqref{eq: n-1liminf} easily follows from Lemma~\ref{lemmaA: equivalenceE}. For $m=0$ set $\vartheta=0$ and $u=1$, then $(\vartheta,u)\in Y_{\eps,a}(0,r)$ for any radius $r$ and $E_{\eps,a}(\vartheta,u;B_r)=0$ for each $\eps$. Now suppose $m>0$ and let $\xi\in (\eta,1)$. Consider the radially symmetric and monotone minimizing couple $(\vartheta\e,u\e)$ of Lemma~\ref{lemmaA: radial symmetry} and $r\x$ introduced in equation~\eqref{eq: rx}. Let us split the set of integration in the two sets $B_{r\x}$ and $B_r\sm B_{r\x}$,  we obtain
\begin{multline}\label{eq: 1exact1}
 \ov f^d_{\eps,a}(m,r)=E_{\eps,a}(\vartheta\e, u\e)\geq\\\underbrace{\int_{B_r\sm B_{r\x}}\lt[\eps^{p-d}|\nb u\e|^p+\frac{(1-u\e)^2}{\eps^{d}}\rt]
 \dx}_{a\e}+\underbrace{\int_{B_{r\x}}\frac{(1-u\e)^2}{\eps^{d}}\dx+\int_{B_{r}}\frac{u\e |\vartheta\e|^2}{\eps}\dx}_{b\e}.
\end{multline}
We deal with each addend separately. First observe that by Cauchy-Schwarz inequality, it holds 
\begin{equation*}
\frac{m^2}{\int_{B_{r}}\frac{1}{u\e}\dx}\;\leq\; \int_{B_{r}}u\e\vartheta\e^2\dx.
\end{equation*}
Plugging the latter in the term $b\e$ of \eqref{eq: 1exact1} we have
\begin{equation*}
b\e\geq\int_{B_{r\x}}\frac{(1-u\e)^2}{\eps^{d}}\dx+\frac{m^2}{\eps\lt(\int_{B_r\sm B_{r\x}}\frac{1}{u\e}\dx+\int_{B_{r\x}}\frac{1}{u\e}\dx\rt)}
\end{equation*}
taking into account $\eta\leq u\e\leq\xi$ in $B_{r\x}$, $\xi\leq u\e\leq 1$ in $B_r \sm B_{r\x}$ and $\eta=a\,\eps^{d+1}$ we obtain
\begin{equation}\label{eq: beps}
b\e\geq\om_d(1-\xi)^2\lt(\dfrac{r\x}{\eps}\rt)^{d}+\frac{m^2}{\dfrac{\om_d}{a}\lt(\dfrac{r\x}{\eps}\rt)^{d}+\om_d\dfrac{\eps r^{d}}{\xi}}.
\end{equation}
Since $b\e\leq \ov f^d_{\eps,a}(m,r) \leq C(m)$ we deduce that  $r\x/\eps$  belongs to a fixed compact subset $K=K(m,\xi)$ of $(0,+\oo)$. Up to extracting a subsequence, which we do not relabel, we can suppose $r\x/\eps$ to converge to some $\hat r>0$. Let us now consider the term $a\e$. Let $v\e$ be the radial profile of $u\e$ 
\begin{equation*}
 a\e=\int_{B_r\sm B_{r\x}}\lt[\eps^{p-d}|\nb u\e|^p+\frac{(1-u\e)^2}{\eps^{d}}\rt]\dx= (d-1)\;\om_{d}\;\int^{r/\eps}_{r\x/\eps}t^{d-1}\lt[|v\e'|^p+(1-v\e)^2\rt]\dt.
\end{equation*} 
With the notation introduced in Subsection~\ref{subSection: transitionenergy} and Lemma~\ref{lemmaA: propG} therein we deduce
\begin{equation*}	
\liminf_{\eps\dw0} a\e\geq (d-1)\,\om_{d}\,\liminf_{\eps\dw0} q^d\lt(\xi;\lt(r\x/\eps,r/\eps\rt)\rt) \geq  (d-1)\,\om_{d}\,q^d_{\oo}(\xi,\hat r),
\end{equation*}
where $q^d_{\oo}$ has been defined in~\eqref{eq: ginfinito}. Combining inequality~\eqref{eq: beps} and the latter we get
\begin{equation*}
\lim_{\eps\dw0}\ov f^d_{\eps,a}(m,r)\geq (d-1)\,\om_{d}\,q^d_{\oo}(\xi,\hat r)+(1-\xi)^2\;\om_d\;\hat r^{d}+\frac{a\;m^2}{\om_{d}\;\hat r^{d}}.
\end{equation*}
Sending $\xi$ to $0$ we have, by continuity (Lemma~\ref{lemmaA: propG}) $q^d_{\oo}(\xi,\hat r)\rw q^d_{\oo}(0,\hat r)$. Then taking the infimum in $\hat r$, we obtain
\[
\liminf_{\eps\dw0}\ov f^d_{\eps,a}(m,r)\geq\min_{\hat r}\lt\{ (d-1)\,\om_{d}\,q^d_{\oo}(0,\hat r)+\;\om_d\;\hat r^{d}+\frac{a\;m^2}{\om_{d}\;\hat r^{d}}\rt\}.
\]
Again by Lemma~\ref{lemmaA: propG} the function $q^d_{\oo}(0,\hat r)$ is nondecreasing in $\hat r$, and $q^d_{\oo}(0,0)>0$ therefore setting
\begin{equation*}
\kappa:=(d-1)\,\om_{d}\,q^d_{\oo}(0,0)\leq f^d_a(m)
\end{equation*}
we conclude the proof of Proposition~\ref{prop: structurefm}.

\subsection{Proof of Proposition~\ref{prop: flimsup}}\label{subSection: Prop2}
Let $\d>0$, by Lemma~\ref{lemmaA: propG} for $\eps$ sufficiently small 
\[
q^d(\eta;\lt(r_*,r/\eps)\rt) \leq q^d_{\oo}(0,r_*)+\d. 
\]
Let
\begin{equation*}
v_\d(t)=\argmin\lt\{\G\lt(v;\lt(r_*,\frac{r}{\eps}\rt)\rt)\dt\;:\;\;v\lt(r_*\rt)=\eta\mbox{ and } v\lt(\frac{r}{\eps}\rt)=1\rt\}.
\end{equation*}
and set
\begin{equation*}
u\e(t):=
\begin{dcases}
\eta&\mbox{ for } 0\leq t\leq  r_*\eps\\
v_\d\lt(\frac{t}{\eps}\rt)&\mbox{ for } r_*\eps\leq  t\leq r\\
\end{dcases}
\end{equation*}
Set $\vartheta\e(s)$ to be constant equal to $\frac{m}{\;\om_{d}\;   (\eps r_*)^{d}}$ on the ball $B_{\eps r_*}$ and zero outside. Indeed, the couple $(\vartheta\e,u\e(|x|))$  belongs to $\ov Y_{\eps,a}(m,r)$. That is because $u\e$ is greater then $\eta$ and attains value $1$ at the border of $B_r$ and
\begin{equation*}
\int_{B_r} \vartheta\e(x)\dx=\frac{m}{\om_d (\eps r_*)^{d}}\;\om_d(\eps r_*)^d=m.
\end{equation*}
Let us show that the couple $(\vartheta\e,u\e)$ defined satisfy inequality~\eqref{eq: n-1recovery}. Taking advantage of the radial symmetry of the functions we get
\begin{multline*}
 E_{\eps,a}(\vartheta\e,u\e)=\int_{\eps  r_*}^r t^{d-1} \lt[\eps^{p+d}|u\e'|^p+\frac{(1-u\e)}{\eps^{d}}\rt]\dt\\+\frac{(1-\eta)^{2}}{\eps^{d}}\;\om_{d}\;(\eps r_*)^{d}+\frac{\eta}{\eps}\lt(\frac{m}{\;\om_{d}\;(\eps r_*)^{d}}\rt)^{2} \;\om_{d}\;(\eps  r_*)^{d}.
\end{multline*}
 By simplifying the expression and considering the change of variable $s=\frac{t}{\eps}$ in the latter it holds
\begin{align*}
   E_{\eps,a}(\vartheta\e,u\e)&=(d-1)\;\om_{d}\;\int_{r_*}^{\frac{ r}{\eps}} s^{d-1} \lt[|v_\d'|^p+(1-v_\d)\rt]\di s+(1-\eta)^{2}\;\om_{d}\; r_\d^{d}+\frac{\eta}{\eps^{d+1}}\frac{\;m^2}{\;\om_{d}\; r_*^{d}}\\
  &\leq (d-1)\;\om_{d}\;q^d\lt(\eta;(r_*,r/\eps)\rt)+(1-\eta)^{2}\;\om_{d}\; r_*^{d}+\frac{\eta}{\eps}\frac{\;m^2}{\;\om_{d}\; r_*^{d}}
\end{align*}
Then, by Lemma~\ref{lemmaA: propG} for $\eps$ sufficiently small we have
\begin{equation*}
E_{\eps,a}(\vartheta\e,u\e)\leq \frac{a \,m^2}{\;\om_{d}\;r_*^{d}}+\;\om_{d}\; r_*^{d}+(d-1)\;\om_{d}\;q^d_{\oo}(0, r_*)+(d-1)\om_{d-1}\d=f^d_a(m)+C\d,
\end{equation*}
which ends the proof of Proposition~\ref{prop: flimsup}.

\subsection{Proof of Proposition~\ref{prop: fproprieties}}\label{subSection: fproprieties}
Propositions~\ref{prop: structurefm},~\ref{prop: flimsup} and lemma~\ref{lemmaA: equivalenceE} ensure that 
\begin{equation}
f^d_a(m)=\lim_{\eps\dw0}\ov f^d_{\eps,a}(m,r)= \lim_{\eps\dw0} f^d_{\eps,a}(m,r,\tilde r)
\end{equation}
independently of the choices for $r$ and $\tilde r<r$. For the sake of clarity we introduce 
\begin{equation*}
  T(m,r):=\lt\{\frac{a \,m^2}{\;\om_{d}\; r^{d}}+\;\om_{d}\; r^{d}+(d-1)\;\om_{d}\;q^d_{\oo}(0, r)\rt\}
\end{equation*}
and recall that $f^d_a(m)= \min_r T(m,r)$ for $m>0$ and $ f^d_a(0)=0$, see~\eqref{Def: reducedfd}.
\begin{proof}~\\
Let us prove the continuity of $f^d_a$ on $(0,+\oo)$. For $m_1, m_2\in (0,+\oo)$ and for $i=1,2$ let $r_i$ be such that $f^d_a(m_i)=T(m_i,r_i)$. On one hand comparing with $r=1$ it holds
\begin{equation}\label{ineqcontinuity}
\frac{m_i^2}{\om_{d-1}\,r_i^d}\leq f^d_a(m_i)\leq T(m_i,1)
\end{equation}
on the other hand analougusly we have
\begin{equation}
\om_{d-1}\,r_i^d\leq f^d_a(m_i)\leq T(m_i,1).
\end{equation}
Consequently  $\om_{d-1} \,r_i^d$ belongs to the compact set $[m_i/T(m_i,1),T(m_i,1)]$. Now remark that 
\begin{equation*}
f^d_a(m_1)\leq T(m_1,r_2)=f^d_a(m_2)+T(m_1,r_2)-T(m_2,r_2)
\end{equation*}
thus 
 \begin{equation*}
 |f^d_a(m_1)-f^d_a(m_2)|\leq|T(m_1,r_2)-T(m_2,r_2)|\leq \frac{|m_1^2-m_2^2|}{\om_{d-1}\min\{r_1^d.r_2^d\}}
 \end{equation*}
 and taking into account inequality~\eqref{ineqcontinuity} we have
  \begin{equation*}
 |f^d_a(m_1)-f^d_a(m_2)|\leq (m_1+m_2)\max\lt\{\frac{T(m_1,1)}{m_1^2} ,\frac{T(m_2,1)}{m_2^2} \rt\}|m_1-m_2|.
 \end{equation*}
Observing  that $T(\cdot,1)$ is continuous  we conclude that $f^d_a$ is continuous on $(0,+\oo)$.

\noindent
Next, we see that $f^d_a$ is non decreasing. Let $0<m_1< m_2$ and $r>0$. Let $(\vartheta,u)\in \ov Y_{\eps,a}(m_2,r)$ such that $E_{\eps,a}\lt(\vartheta,u;B_r\rt)=\ov f^d_{\eps,a}(m_2,r)$ . Set $\ov \vartheta= m_1\vartheta/m_2$ and remark that the couple $(\ov\vartheta,u)$ belongs to $\ov Y_{\eps,a}(m_1,r)$. Therefore we have the following set of inequalities
 \begin{equation*}
  \ov f^d_{\eps,a}(m_1,r)\leq E_{\eps,a}(\ov\vartheta,u;B_r)=E_{\eps,a}\lt(\frac{m_1\vartheta}{m_2},u;B_r\rt)< E_{\eps,a}\lt(\vartheta,u;B_r\rt)=\ov f^d_{\eps,a}(m_2,r).
 \end{equation*}
Passing to the limit as $\eps\dw0$ we obtain
\begin{equation*}
f^d_a(m_1)\leq f^d_a(m_2).
\end{equation*}
Let us now prove the sub-additivity. For a radius $r$ consider the competitors $(\vartheta_j,u_j)\in \ov Y_{\eps,a}(m_j,r)$ for $j=1,2$. Consider the ball $B_{2r+1}$ centered in the origin and two points $x_1,x_2$ such that the balls $B_{r}(x_1)$, $ B_r(x_2)$ are disjoint and contained in $B_{2r+1}$. Set 
 \begin{equation*}
 \ov\vartheta(x):=
  \begin{dcases}
   \vartheta_1(x-x_1),&x\in B_{r}(x_1),\\
      \vartheta_2(x-x_2),&x\in B_{r}(x_2),\\
      \qquad0, &\mbox{otherwise,}
  \end{dcases}
\qquad\mbox{and}\qquad
    \ov u(x):=
  \begin{dcases}
   u_1(x-x_1),&x\in B_{r}(x_1),\\
      u_2(x-x_2),&x\in B_{r}(x_2),\\
      \qquad1, &\mbox{otherwise,}
  \end{dcases}
 \end{equation*}
and observe that the couple $(\ov\vartheta,\ov u)$ belongs to $\ov Y(m_1+m_2,2r+1)$. Being the balls $B_{r}(x_j)$ disjoint we have
  \begin{align*}
  \ov f^d_{\eps,a}(m_1+m_2,r_1+r_2)&\leq E_{\eps,a}(\vartheta_1(x-x_1),u_1(x-x_1);B_r(x_1))+ E_{\eps,a}(\vartheta_2(x-x_2),u_2(x-x_2);B_r(x_2))\\
  &=\ov f^d_{\eps,a}(m_1,r)+f^d_{\eps,a}(m_2,r).
 \end{align*}
Passing to the limit as $\eps\dw0$, and recalling that it is independent of the choice of the radius, we get
 \begin{equation*}
  f^d_{a}(m_1+m_2)\leq f^d_{a}(m_1)+f^d_{a}(m_2).
 \end{equation*}
\end{proof}

We conclude the appendix by showing that
\begin{lemma}\label{lemmaA:pointwiseconvergence}
For any sequence $a_i\dw0$ it holds 
\begin{equation*}
f^d_{a_i} \longrightarrow \kappa \mathbf{1}_{(0,\oo)}
\end{equation*}
pointwise.
\end{lemma}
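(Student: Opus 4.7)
The case $m=0$ is immediate since $f^d_a(0)=0$ for all $a>0$ by definition. For $m>0$, my plan is to establish matching lower and upper bounds. The lower bound $\liminf_{a\dw0} f^d_a(m)\ge \kappa$ is already contained in Proposition~\ref{prop: structurefm}: the estimate~\eqref{eq: reducedconstant} gives $f^d_a(m)\ge \kappa$ for every $a>0$ and every $m>0$. All the work is therefore in the upper bound $\limsup_{a\dw0} f^d_a(m)\le \kappa$.

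For the upper bound, I would use~\eqref{Def: reducedfd} as an infimum rather than just at its minimizer, and plug in a carefully tuned radius $\hat r=\hat r(a)\dw 0$. Choosing, for instance, $\hat r(a)=a^{1/(2d)}$, one gets $\omega_d\hat r^d=\omega_d a^{1/2}\dw 0$ and $a m^2/(\omega_d\hat r^d)=m^2 a^{1/2}/\omega_d\dw 0$, so
\[
f^d_a(m)\;\le\; \frac{am^2}{\omega_d\hat r(a)^d}+\omega_d\hat r(a)^d+(d-1)\omega_d\,q^d_\oo(0,\hat r(a))
\]
reduces, in the limit $a\dw 0$, to showing that $q^d_\oo(0,\hat r)\to q^d_\oo(0,0)$ as $\hat r\dw 0$.

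This continuity of $q^d_\oo(0,\cdot)$ at $0$ is the one step that is not a direct consequence of a quoted statement, and it is what I expect to be the main (minor) obstacle. Lemma~\ref{lemmaA: propG} only tells us that $r_1\mapsto q^d(0,r_1,\oo)=q^d_\oo(0,r_1)$ is nondecreasing, which gives $\lim_{\hat r\dw0}q^d_\oo(0,\hat r)\le q^d_\oo(0,0)$ for free. For the reverse inequality I would take any admissible $v$ for $q^d_\oo(0,\hat r)$ (so $v(\hat r)=0$, $\lim_{t\to\oo}v(t)=1$) and extend it by $0$ on $[0,\hat r]$; this extension is admissible for $q^d_\oo(0,0)$ and adds to the energy exactly $\int_0^{\hat r} t^{d-1}\,dt=\hat r^d/d$. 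Taking the infimum over $v$ gives $q^d_\oo(0,0)\le q^d_\oo(0,\hat r)+\hat r^d/d$, hence $\liminf_{\hat r\dw 0}q^d_\oo(0,\hat r)\ge q^d_\oo(0,0)$.

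Combining the two bounds yields
\[
\limsup_{a\dw 0} f^d_a(m)\;\le\; 0+0+(d-1)\omega_d\,q^d_\oo(0,0)\;=\;\kappa,
\]
which together with $f^d_a(m)\ge \kappa$ proves the pointwise convergence $f^d_{a_i}(m)\to \kappa$ for every $m>0$, as required.
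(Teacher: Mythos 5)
Your overall strategy is the same as the paper's: the lower bound is exactly~\eqref{eq: reducedconstant}, and the upper bound comes from evaluating the expression in~\eqref{Def: reducedfd} at a radius $\hat r(a)\dw 0$ chosen so that the first two terms vanish (the paper takes $\hat r=(\sqrt{a}\,m)^{1/d}$; your $\hat r=a^{1/(2d)}$ works equally well). As you correctly identify, everything then reduces to the right-continuity of $\hat r\mapsto q^d_{\oo}(0,\hat r)$ at $\hat r=0$, a point the paper itself merely asserts. The problem is that your treatment of this point has the inequalities backwards, and the direction you actually need is the one you do not prove. Lemma~\ref{lemmaA: propG} says $r_1\mapsto q^d(\xi,r_1,r_2)$ is \emph{nondecreasing}, hence $q^d_{\oo}(0,\hat r)\geq q^d_{\oo}(0,0)$ for every $\hat r>0$; this yields $\liminf_{\hat r\dw0}q^d_{\oo}(0,\hat r)\geq q^d_{\oo}(0,0)$, not the ``$\lim\leq$'' you claim to get for free. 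Your extension-by-zero argument is correct but establishes that same inequality a second time: $q^d_{\oo}(0,0)\leq q^d_{\oo}(0,\hat r)+\hat r^d/d$ is a \emph{lower} bound on $q^d_{\oo}(0,\hat r)$. What the upper bound on $f^d_a$ requires is the opposite statement, $\limsup_{\hat r\dw0}q^d_{\oo}(0,\hat r)\leq q^d_{\oo}(0,0)$, i.e.\ that the nondecreasing function $q^d_{\oo}(0,\cdot)$ does not jump at $0$; without it your chain of estimates only gives $\limsup_{a\dw0}f^d_a(m)\leq (d-1)\,\om_d\lim_{\hat r\dw0}q^d_{\oo}(0,\hat r)$, which is not known to equal $\kappa$.

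The missing inequality needs a competitor for the problem at level $\hat r$ built from one at level $0$, not the other way around. Fix $\delta>0$ and a nondecreasing $v$ admissible for $q^d_{\oo}(0,0)$ with $\G(v;(0,\oo))\leq q^d_{\oo}(0,0)+\delta$. Since $p>d$, H\"older's inequality gives
\[
v(\hat r)\ \leq\ \lt(\int_0^{\hat r}t^{d-1}|v'|^p\dt\rt)^{1/p}\lt(\int_0^{\hat r}t^{-\frac{d-1}{p-1}}\dt\rt)^{1-1/p}\ \longrightarrow\ 0\qquad\mbox{as }\hat r\dw0,
\]
so the renormalized restriction $w:=(v-v(\hat r))/(1-v(\hat r))$ on $[\hat r,\oo)$ is admissible for $q^d_{\oo}(0,\hat r)$ and satisfies
\[
\G(w;(\hat r,\oo))\ \leq\ \max\lt\{(1-v(\hat r))^{-p},\,(1-v(\hat r))^{-2}\rt\}\,\G(v;(0,\oo)),
\]
whose right-hand side tends to $\G(v;(0,\oo))\leq q^d_{\oo}(0,0)+\delta$ as $\hat r\dw0$. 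This gives $\limsup_{\hat r\dw0}q^d_{\oo}(0,\hat r)\leq q^d_{\oo}(0,0)$ and, inserted into your argument, completes the proof.
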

\begin{proof}
 We have already shown that $f^d_a(m)\geq \kappa$ for $m>0$. For $m>0$ choose $\hat r=(\sqrt{a}m)^{1/d}$, then by definition it holds 
 \[
 \kappa\leq f^d_a(m)\leq (d-1)\,\om_{d}\,q^d_{\oo}(0,(\sqrt{a}m)^{1/d})+\om_d\sqrt{a}m+\frac{\sqrt{a}m}{\om_d}.
 \]
 Finally simply recall that $(d-1)\,\om_{d}\,q^d_{\oo}(0,0)=\kappa$ and that $q^d_{\oo}(0,\cdot)$ is continuous.
\end{proof}
\bibliography{bib}

\begin{thebibliography}{10}

\bibitem{Am_Fu_Pal}
Luigi Ambrosio, Nicola Fusco, and Diego Pallara.
\newblock {\em Functions of bounded variation and free discontinuity problems}.
\newblock Oxford Mathematical Monographs. The Clarendon Press, Oxford
  University Press, New York, 2000.

\bibitem{Ambr_Tilli}
Luigi Ambrosio and Paolo Tilli.
\newblock {\em Topics on analysis in metric spaces}, volume~25 of {\em Oxford
  Lecture Series in Mathematics and its Applications}.
\newblock Oxford University Press, Oxford, 2004.

\bibitem{Amb_Tort2}
Luigi Ambrosio and V.~M. Tortorelli.
\newblock On the approximation of free discontinuity problems.
\newblock {\em Boll. Un. Mat. Ital. B (7)}, 6(1):105--123, 1992.

\bibitem{Amb_Tort1}
Luigi Ambrosio and Vincenzo~Maria Tortorelli.
\newblock Approximation of functionals depending on jumps by elliptic
  functionals via {$\Gamma$}-convergence.
\newblock {\em Comm. Pure Appl. Math.}, 43(8):999--1036, 1990.

\bibitem{FerMerCha16}
Luca~Ferrari Antonin~Chambolle and Benoit Merlet.
\newblock A phase-field approximation of the steiner problem in dimension two.
\newblock {\em Advances in Calculus of Variations}, 2017.

\bibitem{Ber_Cas_Mor}
Marc Bernot, Vicent Caselles, and Jean-Michel Morel.
\newblock {\em Optimal transportation networks}, volume 1955 of {\em Lecture
  Notes in Mathematics}.
\newblock Springer-Verlag, Berlin, 2009.
\newblock Models and theory.

\bibitem{BonOrlOud16}
Mauro Bonafini, Giandomenico Orlandi, and Edouard Oudet.
\newblock Variational approximation of functionals defined on 1-dimensional
  connected sets: the planar case, 2016.

\bibitem{MR3337998}
Matthieu Bonnivard, Antoine Lemenant, and Filippo Santambrogio.
\newblock Approximation of length minimization problems among compact connected
  sets.
\newblock {\em SIAM J. Math. Anal.}, 47(2):1489--1529, 2015.

\bibitem{Bra1}
Andrea Braides.
\newblock {\em Approximation of free-discontinuity problems}, volume 1694 of
  {\em Lecture Notes in Mathematics}.
\newblock Springer-Verlag, Berlin, 1998.

\bibitem{Bra2}
Andrea Braides.
\newblock {\em {$\Gamma$}-convergence for beginners}, volume~22 of {\em Oxford
  Lecture Series in Mathematics and its Applications}.
\newblock Oxford University Press, Oxford, 2002.

\bibitem{RepresentationCFM}
Antonin Chambolle, Luca Ferrari, and Benoit Merlet.
\newblock Strong approximation in $h$-mass of rectifiable currents under
  homological constraint.
\newblock {\em Preprint}, 2018.

\bibitem{CoDeMaSt17}
Maria Colombo, Antonio~De Rosa, Andrea Marchese, and Salvatore Stuvard.
\newblock On the lower semicontinuous envelope of functionals defined on
  polyhedral chains, 2017.

\bibitem{ContiFocardiIurlano}
S.~Conti, M.~Focardi, and F.~Iurlano.
\newblock Phase field approximation of cohesive fracture models.
\newblock {\em Ann. Inst. H. Poincar\'e Anal. Non Lin\'eaire},
  33(4):1033--1067, 2016.

\bibitem{DalMaso}
Gianni Dal~Maso.
\newblock {\em An introduction to {$\Gamma$}-convergence}, volume~8 of {\em
  Progress in Nonlinear Differential Equations and their Applications}.
\newblock Birkh\"auser Boston, Inc., Boston, MA, 1993.

\bibitem{MR3409135}
Lawrence~C. Evans and Ronald~F. Gariepy.
\newblock {\em Measure theory and fine properties of functions}.
\newblock Textbooks in Mathematics. CRC Press, Boca Raton, FL, revised edition,
  2015.

\bibitem{Federer}
Herbert Federer.
\newblock {\em Geometric measure theory}.
\newblock Die Grundlehren der mathematischen Wissenschaften, Band 153.
  Springer-Verlag New York Inc., New York, 1969.

\bibitem{Gilb_Poll}
E.~N. Gilbert and H.~O. Pollak.
\newblock Steiner minimal trees.
\newblock {\em SIAM J. Appl. Math.}, 16:1--29, 1968.

\bibitem{Iur}
Flaviana Iurlano.
\newblock Fracture and plastic models as {$\Gamma$}-limits of damage models
  under different regimes.
\newblock {\em Adv. Calc. Var.}, 6(2):165--189, 2013.

\bibitem{Mod_Mort}
Luciano Modica and Stefano Mortola.
\newblock Un esempio di {$\Gamma ^{-}$}-convergenza.
\newblock {\em Boll. Un. Mat. Ital. B (5)}, 14(1):285--299, 1977.

\bibitem{OS2011}
Edouard Oudet and Filippo Santambrogio.
\newblock A {M}odica-{M}ortola approximation for branched transport and
  applications.
\newblock {\em Arch. Ration. Mech. Anal.}, 201(1):115--142, 2011.

\bibitem{Paol_Step}
Emanuele Paolini and Eugene Stepanov.
\newblock Existence and regularity results for the {S}teiner problem.
\newblock {\em Calc. Var. Partial Differential Equations}, 46(3-4):837--860,
  2013.

\bibitem{MR3409718}
Filippo Santambrogio.
\newblock {\em Optimal transport for applied mathematicians}, volume~87 of {\em
  Progress in Nonlinear Differential Equations and their Applications}.
\newblock Birkh\"auser/Springer, Cham, 2015.
\newblock Calculus of variations, PDEs, and modeling.

\bibitem{White1}
Brian White.
\newblock The deformation theorem for flat chains.
\newblock {\em Acta Math.}, 183(2):255--271, 1999.

\bibitem{White2}
Brian White.
\newblock Rectifiability of flat chains.
\newblock {\em Ann. of Math. (2)}, 150(1):165--184, 1999.

\bibitem{Xia2}
Qinglan Xia.
\newblock Optimal paths related to transport problems.
\newblock {\em Commun. Contemp. Math.}, 5(2):251--279, 2003.

\bibitem{Xia1}
Qinglan Xia.
\newblock Interior regularity of optimal transport paths.
\newblock {\em Calc. Var. Partial Differential Equations}, 20(3):283--299,
  2004.

\end{thebibliography}
\end{document}